\numberwithin{equation}{section}
\theoremstyle{plain}
\newtheorem{theorem}{Theorem}[section]
\newtheorem{lemma}[theorem]{Lemma}
\newtheorem{corollary}[theorem]{Corollary}
\theoremstyle{definition}
\theoremstyle{remark}
\newtheorem{remark}[theorem]{Remark}
\renewcommand{\Re}{\operatorname{Re}}
\renewcommand{\Im}{\operatorname{Im}}
\newcommand{\sgn}{\operatorname{sgn}}
\newcommand{\supp}{\operatorname{supp}}
\newcommand{\GL}{\operatorname{GL}}
\newcommand{\SL}{\operatorname{SL}}
\newcommand{\dd}{\mathrm{d}}
\newcommand{\Sym}{\operatorname{Sym}}
\def\@tocline#1#2#3#4#5#6#7{\relax
  \ifnum #1>\c@tocdepth % then omit
  \else
    \par \addpenalty\@secpenalty\addvspace{#2}%
    \begingroup \hyphenpenalty\@M
    \@ifempty{#4}{%
      \@tempdima\csname r@tocindent\number#1\endcsname\relax
    }{%
      \@tempdima#4\relax
    }%
    \parindent\z@ \leftskip#3\relax \advance\leftskip\@tempdima\relax
    \rightskip\@pnumwidth plus4em \parfillskip-\@pnumwidth
    #5\leavevmode\hskip-\@tempdima
      \ifcase #1
       \or\or \hskip 1em \or \hskip 2em \else \hskip 3em \fi%
      #6\nobreak\relax
    \hfill\hbox to\@pnumwidth{\@tocpagenum{#7}}\par% <---- \dotfill -> \hfill
    \nobreak
    \endgroup
  \fi}
\begin{document}

\title[The cubic moment of Hecke--Maass cusp forms and moments of $L$-functions]
{The cubic moment of Hecke--Maass cusp forms and \\ moments of $L$-functions}
\author{Bingrong Huang}
\address{Data Science Institute and School of Mathematics \\ Shandong University \\ Jinan \\Shandong 250100 \\China}
\email{brhuang@sdu.edu.cn}
\date{\today}
\thanks{This work was supported by  the National Key R\&D Program of China (No. 2021YFA1000700) and NSFC (Nos. 12001314 and 12031008).}

\begin{abstract}
  In this paper, we prove that the smooth cubic moments dissipate for the Hecke--Maass cusp forms, which gives a new case of the random wave conjecture.
  In fact, we can prove a polynomial decay for the smooth cubic moments,
  while for the smooth second moment (i.e.\ QUE) no rate of decay is known unconditionally for general Hecke--Maass cusp forms.   The proof is based on various estimates of moments of central $L$-values. We prove the Lindel\"of on average bound for the first moment of $\GL(3)\times \GL(2)$ $L$-functions in short intervals of the subconvexity strength length, and the convexity strength upper bound for the mixed moment of $\GL(2)$ and the triple product $L$-functions.
%  As a byproduct,
  In particular, we prove new subconvexity bounds of certain $\GL(3)\times \GL(2)$ $L$-functions.
\end{abstract}

\keywords{Cubic moment, moment of $L$-function, subconvexity,
Hecke--Maass form, % Eisenstein series, % $\GL(3)\times \GL(2)$,
random wave conjecture.}

\subjclass[2020]{11F12, 11F72, 58J51, 81Q50}
\maketitle
%\setcounter{tocdepth}{1}%使目录只显示到节
%\tableofcontents

%%%%%%%%%%%%%%%%%%%%%%%%%%%%%%%%%%%%%%%%%%%%%%%%%%%%%%%%%%%%%%%%
%%%%%                        Section                       %%%%%
%%%%%%%%%%%%%%%%%%%%%%%%%%%%%%%%%%%%%%%%%%%%%%%%%%%%%%%%%%%%%%%%
\section{Introduction} \label{sec:Intr}

%%%%%%%%%%%%%%%%%%%%%%%%%%%%%%%%%%%%%%%%%%%%%%%%%%%%%%%%%%%%%%%%
%%%%%                     Subsection                       %%%%%
%%%%%%%%%%%%%%%%%%%%%%%%%%%%%%%%%%%%%%%%%%%%%%%%%%%%%%%%%%%%%%%%

\subsection{Cubic moment of Hecke--Maass cusp forms}

The value distribution of   eigenfunctions of the Laplacian on a Riemann surface has received a lot of attention in the context
of quantum chaos. Following Michael Berry's suggestion \cite{berry1977rugular} that eigenfunctions for chaotic systems are
modeled by random waves, it is believed that eigenfunctions on a compact hyperbolic surface have a Gaussian value distribution as the eigenvalue tends to infinity, and the  moments of an $L^2$-normalized eigenfunction should be given by the Gaussian moments.

Here we examine the modular surface $\mathbb{X}=\SL(2,\mathbb Z)\backslash \mathbb H$,  with $\mathbb H=\{z=x+iy:y>0\}$, which  is a noncompact, but finite area hyperbolic surface. The spectrum of the Laplacian
$\Delta = -y^2(\frac{\partial^2}{\partial x^2}+\frac{\partial^2}{\partial y^2})$
on $\mathbb X$ has both discrete and continuous components. The discrete spectrum consists of the
constants and the space  of cusp forms, for which we can take an orthonormal
basis $\{\phi_j\}_{j=1}^\infty$  of Hecke--Maass forms, which are real valued joint eigenfunctions of both the Laplacian and all the Hecke operators, and decay exponentially at the cusp of the modular domain. The modular surface $\mathbb X$ carries a further symmetry induced by the orientation reversing isometry $z\rightarrow -\bar{z}$ of $\mathbb{H}$ and our $\phi_j$'s are either even or odd with respect to this symmetry.

The non-compact nature of the modular domain prevents a literal application of the random wave model; for instance Sarnak \cite{sarnak2004letter} proved
that the supremum of cusp forms is much larger than that of random waves: $\sup_{z\in \mathbb X}|\phi(z)| \gg \lambda_\phi^{1/12}$ for $\phi\in \{\phi_j\}$, due to blowup at the cusp.
Here $\lambda_\phi$ is the Laplacian eigenvalue of $\phi$, so that $\Delta\phi=\lambda_\phi \phi$.
However, we do expect that the random wave model remains valid when we restrict to a compact subset, which we do now. We wish to examine the value distribution of $\phi(z)$ when $z$ is sampled from a compactly supported probability distribution, having a smooth density $\psi(z)\in C_c^\infty(\mathbb X)$ with respect to the  hyperbolic measure $ \frac{\dd x \dd y}{y^2}$, where $\psi\geq 0$ and $\int_{\mathbb X} \psi(z)  \frac{\dd x \dd y}{y^2} =1$.
%The random wave conjecture predicts that: For any $k\in\mathbb{N}$,
In particular we study the cubic moments, which detect sign changes, and should vanish if the value distribution is Gaussian. Our main result confirms this, with a power saving.

\begin{theorem}\label{thm:HM}
Fix a smooth compactly supported function  $\psi$  on $\mathbb X$. Then as $\lambda_\phi \to \infty$, for any $\delta<1/24$,
\[
  \int_{\mathbb X}  \psi(z)  \phi(z)^3  \frac{\dd x \dd y}{y^2} \ll \lambda_\phi^{-\delta}.
\]
\end{theorem}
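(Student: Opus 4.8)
The plan is to pass to the spectral side. Write the cubic moment as the $L^2(\mathbb X)$ pairing $\langle\psi\phi,\phi^2\rangle$ and expand the automorphic function $\phi^2=|\phi|^2$ into the constant function, a Hecke--Maass orthonormal basis $\{\phi_j\}$, and the Eisenstein spectrum. The constant term contributes $\tfrac{1}{\vol(\mathbb X)}\langle\phi,\psi\rangle$, which is $\ll_A\lambda_\phi^{-A}$ by moving $\Delta$ repeatedly from $\phi$ onto the fixed smooth $\psi$; the Eisenstein contribution is postponed. One is left with the cuspidal sum $\sum_j\langle\psi\phi,\phi_j\rangle\langle\phi_j,\phi^2\rangle$. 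Now expand $\psi$ itself in the spectrum: its spectral coefficients decay faster than any power of the spectral parameter, so up to a negligible error $\langle\psi\phi,\phi_j\rangle$ becomes a finite combination of triple products $\langle u\phi,\phi_j\rangle$ with $u$ a Maass or Eisenstein form of bounded parameter, together with a diagonal contribution from the constant part of $\psi$ that isolates exactly $\int_{\mathbb X}\phi^3$.

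Next feed in Watson's (Ichino's) triple product formula and the Gelbart--Jacquet symmetric square lift $\operatorname{sym}^2\phi$ on $\GL(3)$: up to archimedean and harmless arithmetic factors, $|\langle\phi_j,\phi^2\rangle|^2$ equals $\Lambda(1/2,\phi_j)\Lambda(1/2,\operatorname{sym}^2\phi\times\phi_j)$ divided by $\Lambda(1,\operatorname{ad}\phi_j)\Lambda(1,\operatorname{ad}\phi)^2$, while $|\langle u\phi,\phi_j\rangle|^2$ is governed by the degree-eight triple product value $\Lambda(1/2,u\times\phi\times\phi_j)$. The $\Gamma$-factors in these identities are the essential localizer: combined with the rapid decay of the coefficients of $\psi$, they confine the surviving $\phi_j$ to a short window $|t_j-t_\phi|\ll(\log t_\phi)^{O(1)}$ about the spectral parameter $t_\phi$ of $\phi$, and annihilate any $\phi_j$ with $t_j$ near $0$ or near $2t_\phi$. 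Here one also records the standard lower bounds $\Lambda(1,\operatorname{ad}\phi),\Lambda(1,\operatorname{ad}\phi_j)\gg(\log t_\phi)^{-O(1)}$, valid because these symmetric square $L$-functions factor and so have no Siegel zero.

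The core of the argument is to show $\sum_{|t_j-t_\phi|\ll(\log t_\phi)^{O(1)}}\langle u\phi,\phi_j\rangle\langle\phi_j,\phi^2\rangle\ll t_\phi^{-\delta}$ for some $\delta>0$. A purely trivial bound --- Cauchy--Schwarz using $\sum_j|\langle\psi\phi,\phi_j\rangle|^2\le\|\psi\phi\|_2^2\ll_\psi 1$ --- falls just short, giving only $t_\phi^{\varepsilon}$, so arithmetic cancellation is genuinely needed. I would instead apply Cauchy--Schwarz to the $j$-sum after substituting the triple product formulas, splitting the three central values as $\sqrt{\Lambda(1/2,\operatorname{sym}^2\phi\times\phi_j)}$ against $\sqrt{\Lambda(1/2,\phi_j)\Lambda(1/2,u\times\phi\times\phi_j)}$. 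This reduces matters to (i) the Lindel\"of-on-average bound for the short-window first moment $\sum_{|t_j-t_\phi|\ll(\log t_\phi)^{O(1)}}L(1/2,\operatorname{sym}^2\phi\times\phi_j)$ of $\GL(3)\times\GL(2)$ $L$-functions --- a window short enough that, by positivity, such a bound already yields subconvexity for the individual values --- and (ii) a convexity-strength bound for the mixed moment $\sum_{|t_j-t_\phi|\ll(\log t_\phi)^{O(1)}}L(1/2,\phi_j)L(1/2,u\times\phi\times\phi_j)$ of $\GL(2)$ and triple product $L$-functions. Each is opened by an approximate functional equation and attacked with the Kuznetsov formula in the $t_j$-variable: the diagonal produces the main term, while the off-diagonal Kloosterman terms are handled by $\GL(3)$-Voronoi summation on the $\operatorname{sym}^2\phi$-coefficients followed by stationary-phase analysis of the resulting oscillatory integrals. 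Summing over $u$ against the rapidly decaying coefficients of $\psi$ and combining the two moment bounds yields the power saving; balancing exponents across the pieces produces any $\delta<1/24$.

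Finally the leftover pieces must be dispatched. Watson's formula expresses the diagonal term $\int_{\mathbb X}\phi^3$ through $L(1/2,\phi)$ and $L(1/2,\operatorname{sym}^2\phi\times\phi)$ with a negative power of $t_\phi$ in front, so it is a power saving once one has the subconvex bound for $L(1/2,\operatorname{sym}^2\phi\times\phi)$ --- which, by positivity, drops out of the short-window first moment in (i). The Eisenstein contribution, a short integral in $t$ of $\langle\phi^2,E_{1/2+it}\rangle\langle\psi\phi,E_{1/2+it}\rangle$, is handled by Rankin--Selberg unfolding: $\langle\phi^2,E_{1/2+it}\rangle$ becomes $\zeta(1/2+it)L(1/2+it,\operatorname{sym}^2\phi)/\zeta(1+2it)$ and, after expanding $\psi$, $\langle u\phi,E_{1/2+it}\rangle$ becomes a $\GL(2)\times\GL(2)$ central value; the $\Gamma$-factors again confine $t$ to $|t|\ll t_\phi^{\varepsilon}$, and known subconvexity for $\GL(2)\times\GL(2)$ in the spectral aspect together with the subconvex bound for $\operatorname{sym}^2\phi$ makes this term $\ll t_\phi^{-\delta}$ too. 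I expect step (i) to be the real obstacle: proving Lindel\"of on average for the $\GL(3)\times\GL(2)$ first moment in a window this short relative to the conductor of size $\asymp t_\phi^{6}$, where the $\GL(3)$-Voronoi dual sum is long and the stationary-phase treatment of the Kloosterman terms is delicate. It is precisely in resolving this, rather than in the comparatively soft spectral and archimedean bookkeeping, that the new subconvexity bounds for $\GL(3)\times\GL(2)$ $L$-functions arise.
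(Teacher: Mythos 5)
Your plan follows the paper's architecture: decompose $\psi$ and $\phi^{2}$ spectrally (equivalently, decompose $\psi$ and then apply Parseval to each $\langle\phi_{k}\phi,\phi^{2}\rangle$ and $\langle E_{t}\phi,\phi^{2}\rangle$), feed in Watson's formula and Rankin--Selberg unfolding, split the resulting square-root products by Cauchy--Schwarz, and reduce to a first moment of $\operatorname{GL}(3)\times\operatorname{GL}(2)$ central values together with a mixed $\operatorname{GL}(2)\,\times$\,triple-product moment, with Kuznetsov and $\operatorname{GL}(3)$-Voronoi driving the first-moment estimate. That is correct in outline, and you rightly locate the obstruction in the $\operatorname{GL}(3)\times\operatorname{GL}(2)$ first moment.

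There are, however, three concrete gaps in how you calibrate and execute the two moment bounds. First, asking for a \emph{Lindel\"of-on-average} bound in a window of length $(\log t_{\phi})^{O(1)}$ about $t_{\phi}$ is far too strong: such a bound would give individual Lindel\"of, not merely subconvexity, and would produce $\delta<1/8$ rather than $\delta<1/24$. What can actually be proved (and what the paper proves) is Lindel\"of on average for windows $T^{1/3+\varepsilon}\leq M\leq T^{1/2-\varepsilon}$; shortening to a $T^{\varepsilon}$-window is then done by dropping nonnegative terms, which costs a factor $T^{1/3}$ and yields a $T^{4/3+\varepsilon}$ bound for the short-window first moment. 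You gesture at positivity but your description leaves this enlargement-then-restriction step invisible, and the stated target (i) as written is unattainable. Second, your sketch treats moment (ii) by the same Kuznetsov plus $\operatorname{GL}(3)$-Voronoi machinery, but there is no $\operatorname{sym}^{2}\phi$ in (ii); what actually closes (ii) is much softer: use Ramakrishnan's automorphic induction to view $\phi\times\phi_{k}$ as a $\operatorname{GL}(4)$ cusp form $\Phi$, apply Cauchy--Schwarz to separate $\sum L(1/2,\phi_{j})^{2}$ from $\sum L(1/2,\phi_{j}\times\Phi)^{2}$, open the latter by an approximate functional equation of length $T^{2+\varepsilon}$ (thanks to conductor dropping at $t_{j}\approx T$), and finish with the spectral large sieve and a Ramanujan-on-average bound for $\operatorname{GL}(4)$ coefficients (via Kim's exterior square). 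Third, in the Eisenstein piece $\langle\phi^{2},E_{1/2+i\tau}\rangle\langle\psi\phi,E_{1/2+i\tau}\rangle$, the archimedean weight confines $|\tau|$ to a $T^{\varepsilon}$-neighbourhood of $T$, not of $0$; your claim $|t|\ll t_{\phi}^{\varepsilon}$ is wrong, and the subsequent conductor bookkeeping and the subconvexity input for $L(1/2+i\tau,\operatorname{Sym}^{2}\phi)$ with $\tau\approx T$ depend on getting this right. Also, a small slip: the symmetric-square $L$-functions of generic Maass forms do not factor; the lower bound $L(1,\operatorname{Sym}^{2}\phi_{j})\gg t_{j}^{-\varepsilon}$ is the Hoffstein--Lockhart theorem, which is a different mechanism.
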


The quantum unique ergodicity (QUE) \cite{RudnickSarnak1994behaviour} concerns the second moment of eigenfunctions.
%It asserts that the restriction of $\phi^2$ to a nice $\Omega\subset \mathbb{X}$ as in the above conjecture satisfies
%\begin{equation}\label{eqn:QUE}
%  \int_{\Omega} \phi(z)^2 \frac{\dd x\dd y}{y^2} = \frac{\Area(\Omega)}{\Area(\mathbb{X})} + o(1),  \quad  \textrm{as }  \lambda_\phi\rightarrow \infty.
%\end{equation}
It asserts that  we have
\begin{equation}\label{eqn:QUE}
  \int_{\mathbb{X}} \psi(z) \phi(z)^2 \frac{\dd x\dd y}{y^2} = \int_{\mathbb{X}} \psi(z) \frac{3}{\pi}\frac{\dd x\dd y}{y^2} + o(1),  \quad  \textrm{as }  \lambda_\phi\rightarrow \infty,
\end{equation}
for any smooth compactly supported function $\psi$ on $\mathbb X$.
For Hecke--Maass cusp forms for $\SL(2,\mathbb{Z})$, this is known to be true due to the breakthroughs  of Lindenstrauss \cite{lindenstrauss2006invariant} and Soundararajan \cite{soundararajan2010quantum}. But no rate of decay of the error term is known.

The cubic moment is also related to the $L^4$-norm problem of Hecke--Maass forms.
Under the generalized Lindel\"of Hypothesis (GLH),
%for an $L^2$-normalized Hecke--Maass cusp form $\phi$,
%for $\phi\in\{\phi_j\}$,
Sarnak and Watson \cite[Theorem 3]{sarnak2003spectra} can prove a sharp upper bound for the $L^4$-norm, i.e., $\|\phi\|_4^4=\int_{\mathbb{X}} \phi(z)^4 \frac{\dd x\dd y}{y^2}\ll \lambda_\phi^{o(1)}$.
Buttcane--Khan \cite{ButtcaneKhan2017fourth} proved the $L^4$-norm of $\phi$ on the whole surface $\mathbb{X}$ has an asymptotic formula $\|\phi\|_4^4 \sim 9/\pi$, under the GLH.
%As a consequence, we have
%\[
%  \int_{\mathbb X} \psi(z) \phi(z)^3 \frac{\dd x\dd y}{y^2} \ll 1,
%\]
%under the GLH, for nice $\psi$ as above.
Recently, Humphries--Khan \cite{HumphriesKhan} proved an upper bound unconditionally, i.e.,
$\|\phi\|_4 \ll \lambda_\phi^{3/304+o(1)}$.
The random wave model predicts
$
  \int_{\mathbb X} \psi(z) \phi(z)^4 \frac{\dd x\dd y}{y^2} \sim 3 \int_{\mathbb{X}} \psi(z) \frac{9}{\pi^2}\frac{\dd x\dd y}{y^2},
$
for any smooth compactly supported function  $\psi$  on $\mathbb X$.

%
%\begin{theorem}
%  The Cubic Moment Conjecture \ref{conj} is true.
%\end{theorem}

%By smooth approximations of an indicator function, one can consider the smoothed version of the cubic moment conjecture:
%\begin{equation}\label{eqn:cubicmoment-smooth}
%    \int_{\mathbb X} \psi(z) \phi(z)^3 \frac{\dd x\dd y}{y^2} \rightarrow 0,  \quad  \textrm{as }  \lambda_\phi\rightarrow \infty,
%\end{equation}
%for any fixed smooth compactly supported function $\psi$ on $\mathbb X$.
%%  for a fixed compact domain $\Omega \subset \mathbb X$ with measure zero boundary, we have
%%  \[
%%    \int_{\Omega} \phi(z)^3 \frac{\dd x\dd y}{y^2} = o(1),  \quad  \textrm{as }  \lambda_\phi\rightarrow \infty.
%%  \]

\begin{remark}
It is a surprise that a power saving upper bound can be proved for the smooth cubic moment $\int_{\mathbb X}  \psi(z)  \phi(z)^3  \frac{\dd x \dd y}{y^2}$,
since for the second moment (QUE) one cannot yet obtain any rate of decay unconditionally.
Assume the generalized Riemann Hypothesis or GLH, the exponent in Theorem \ref{thm:HM} can be taken to be  any $\delta<1/4$.
But unlike the QUE case, we may not expect $1/4$ is sharp.
\end{remark}

\begin{remark}
  Watson \cite[Theorem 5]{watson2008rankin} proved $\int_{\mathbb X} \phi(z)^3 \frac{\dd x\dd y}{y^2} %= \langle \phi^2,\phi \rangle
  \ll \lambda_\phi^{-1/12+\varepsilon}$. %For example, Sarnak \cite{sarnak2004letter} proved $\|\phi\|_\infty \gg \lambda_\phi^{1/12-\varepsilon}$.
  This is a simple consequence of the subconvexity bound of the $\GL(2)$ $L$-function $L(1/2,\phi)$ and Watson's formula. But to estimate the smooth cubic moment is much harder, since we have more complicated terms involved the Hecke--Maass cusp forms in \eqref{eqn:Selberg_decomposition}, and Watson's result corresponds to the contribution from the constant function.
\end{remark}

%\begin{remark}
%  With a good enough understanding of the $L^4$-norm bounds and some additional work in our method in this paper, one may prove $\int_{\Omega} \phi(z)^3 \frac{\dd x\dd y}{y^2}$ with a power saving upper bound, where  $\Omega \subset \mathbb X$ is a compact domain with measure zero boundary.
%  Under the GLH, we can prove this by smooth approximations of indicator functions.
%%  Our method also allows us to consider the small scale cubic moment bounds.
%%  To make our ideas clearer, we do not pursue them in this paper.
%%  It seems that the $L^4$-norm bounds of Humphries--Khan \cite{HumphriesKhan} is still not enough for this purpose.
%%
%%  We will use the $L^4$-norm bounds of Humphries--Khan \cite{HumphriesKhan} and extend our techniques to deal with
%%  $
%%    \int_{\Omega}  \phi(z)^3 \frac{\dd x\dd y}{y^2}
%%  $
%%  in a follow-up paper.
%\end{remark}

%\begin{remark}
%  Since we have QUE \cite{lindenstrauss2006invariant,soundararajan2010quantum} and the $L^4$-norm bound (under GRH) \cite{ButtcaneKhan2017fourth}, by the Cauchy--Schwarz inequality, we get
%  \[
%    \langle \psi, \phi^3 \rangle = \langle \psi \phi, \phi^2 \rangle
%    \ll \| \psi \phi \|_{L^2} \| \phi^2 \|_{L^2}
%    \ll \| \phi \|_{L^2} \| \phi \|_{L^4}^2 \ll 1.
%  \]
%\end{remark}

\begin{remark}
  Note that $\lim_{\lambda_\phi\rightarrow\infty} \int_{\mathbb X} \psi(z) \phi(z)^3 \frac{\dd x\dd y}{y^2}$ vanishes because of the cancellation between positive and negative values of $\phi$.
  Fix a $\psi(z)\in C_c^\infty(\mathbb X)$ with   $\psi\geq 0$ and $\int_{\mathbb X} \psi(z)  \frac{\dd x \dd y}{y^2} =1$.
  %For the $L^3$-norm of $\phi$ restricted to $\psi$,
  By H\"older's inequality, we have
\[
  \int_{\mathbb X} \psi(z)\phi(z)^2 \frac{\dd x\dd y}{y^2} \leq \Big( \int_{\mathbb X} \psi(z) |\phi(z)|^3 \frac{\dd x\dd y}{y^2} \Big)^{2/3}
  \Big( \int_{\mathbb X} \psi(z) \frac{\dd x\dd y}{y^2} \Big)^{1/3}.
\]
By the QUE \eqref{eqn:QUE}, we get
\begin{equation}\label{eqn:L3norm_HM}
  \int_{\mathbb X} \psi(z) |\phi(z)|^3 \frac{\dd x\dd y}{y^2}
  \geq  % \Big(  \frac{\Area(\Omega)}{\Area(\mathbb{X})} \Big)^{3/2}  \Area(\Omega)^{-1/2} +o(1) =
   \left(\frac{3}{\pi}\right)^{3/2}  + o(1),
\end{equation}
which is bounded below by a constant.
\end{remark}

%Let $\{u_k\}_{k\geq1}$ be an orthonormal basis of the space of cusp forms.
The Eisenstein series $E(z,s)$ is defined by
\[
  E(z,s) = \sum_{\gamma\in \Gamma_\infty\backslash \SL(2,\mathbb{Z})} \Im(\gamma z)^s = \frac{1}{2} \sum_{(c,d)=1} \frac{y^s}{|cz+d|^{2s}}
\]
for $\Re(s)>1$, and has a meromorphic continuation to $s\in\mathbb{C}$, where
$\Gamma_\infty=\left\{\pm \left(\begin{smallmatrix}1&n\\ &1\end{smallmatrix}\right):n\in\mathbb{Z}\right\}$.
%Write $E_{T}(z)=E(z,1/2+iT)$.
The continuous spectrum is spanned by Eisenstein series $E(z,1/2+it)$ with $t\in \mathbb{R}$.
To prove Theorem \ref{thm:HM} we apply Selberg's spectral decomposition of $\psi$,
\begin{equation}\label{eqn:Selberg_decomposition}
  \psi(z) = \langle \psi,1 \rangle \frac{3}{\pi} + \sum_{k\geq1} \langle \psi, \phi_k\rangle \phi_k(z)
  + \frac{1}{4\pi} \int_{\mathbb{R}} \langle \psi,E(\cdot,1/2+it)\rangle E(z,1/2+it) \dd t,
\end{equation}
where $\langle \psi,f\rangle = \int_{\mathbb{X}} \psi(z)\overline{f(z)} \frac{\dd x\dd y}{y^2}$ is the Petersson inner product on $L^2(\mathbb{X})$.
The contribution from the constant function to $\langle \psi,\phi^3 \rangle$ are estimated by Watson \cite[Theorem 5]{watson2008rankin}. We will consider the contribution from both the cusp forms and Eisenstein series and prove the following theorem.

\begin{theorem}\label{thm:HM2}
  Assume $\lambda_k\ll \lambda_\phi^{o(1)}$ and real $t\ll \lambda_\phi^{o(1)}$. Then we have
  \begin{equation}\label{eqn:3moment_cusp}
    \int_{\mathbb X} \phi_k(z) \phi(z)^3 \frac{\dd x \dd y}{y^2} \ll \lambda_\phi^{-1/24+o(1)},
  \end{equation}
  and
  \begin{equation}\label{eqn:3moment_ES}
    \int_{\mathbb X} E(z,1/2+it) \phi(z)^3 \frac{\dd x\dd y}{y^2} \ll \lambda_\phi^{-1/12+o(1)},
  \end{equation}
  as $\lambda_\phi\rightarrow \infty$.
\end{theorem}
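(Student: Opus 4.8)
The plan is to open up each integral $\int_{\mathbb X} \phi_k(z)\phi(z)^3\,\frac{\dd x\,\dd y}{y^2}$ (and similarly with $E(z,1/2+it)$) via a spectral/period expansion and then match it to central $L$-values through Watson-type and Ichino-type triple product formulas. First I would write $\phi(z)^3 = \phi(z)^2\cdot\phi(z)$ and insert Parseval in the form of the spectral decomposition for $\phi^2$ (or, what is cleaner, directly expand the triple product $\langle\phi_k,\phi^3\rangle$ by introducing an auxiliary spectral parameter): one reduces to a sum/integral over the spectrum of products of triple product periods $\langle \phi_k\cdot f, \phi\cdot\phi\rangle$. By the Watson--Ichino formula each such period is, up to explicit archimedean factors, the square root of a triple product central $L$-value $L(1/2,\phi\times\phi\times f)$, which factors (for $f$ in the $\GL(2)$ spectrum with symmetric-square lift $F$ on $\GL(3)$) as $L(1/2,F\times f)\cdot L(1/2,f)$ or, when $f$ is replaced by an Eisenstein series, as a product of $\GL(2)$ Hecke $L$-functions and $\GL(3)\times\GL(2)$ $L$-values $L(1/2, \mathrm{sym}^2\phi \times f)$. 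The upshot is that the cubic moment is bounded by a short-interval first moment of $\GL(3)\times\GL(2)$ $L$-functions of the shape $\sum_{t_f \approx T} L(1/2, \mathrm{sym}^2\phi\times\phi_f)\,|\text{coeff}|$ together with a mixed second-moment term involving $L(1/2,\phi_f)$, where $T \asymp \lambda_\phi^{1/2}$ is the ``conductor-dropping'' length and the relevant spectral window has length of subconvexity strength.

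The key steps, in order, are: (i) set up the period expansion and reduce $\langle\phi_k,\phi^3\rangle$ and $\langle E(\cdot,1/2+it),\phi^3\rangle$ to weighted spectral sums of triple product periods, carefully tracking the test functions and the ranges of the spectral parameter coming from the archimedean trilinear form (this localizes the $\GL(2)$ spectral parameter to a window of length $\asymp \lambda_\phi^{1/2}$ around $0$, with a transition region governing the polynomial decay); (ii) apply Watson--Ichino to convert periods to central $L$-values, and factor the $\GL(2)^{\otimes 3}$ $L$-value into the product $L(1/2,\mathrm{sym}^2\phi\times\phi_f)L(1/2,\phi_f)$ (resp.\ the Eisenstein analogue); (iii) invoke the Lindel\"of-on-average bound for the first moment of $\GL(3)\times\GL(2)$ $L$-functions in short intervals — which by the remark in the introduction is available as an earlier result — to control the $\mathrm{sym}^2\phi\times\phi_f$ factor on average over the window, and the convexity-strength bound for the mixed moment of $\GL(2)$ and triple product $L$-functions for the remaining piece; (iv) combine these two inputs by Cauchy--Schwarz (splitting the spectral sum into a ``Lindel\"of on average'' part and a ``convexity on average'' part), balance the exponents, and bound the continuous-spectrum contribution the same way using bounds for the fourth moment of $\GL(2)$ $L$-functions on the critical line; (v) the Eisenstein case \eqref{eqn:3moment_ES} is structurally the same but one power of an $L$-function is replaced by a ratio of zeta values, which is why the exponent improves from $1/24$ to $1/12$; (vi) track the dependence on $\lambda_k$ and $t$ and check it is absorbed into $\lambda_\phi^{o(1)}$, using the hypothesis $\lambda_k, t \ll \lambda_\phi^{o(1)}$.

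The main obstacle is step (iii)--(iv): one needs the short-interval first moment of $\GL(3)\times\GL(2)$ $L$-functions $\sum_{|t_f - T|\le T^{\theta}} L(1/2,\mathrm{sym}^2\phi\times\phi_f)$ to be of the right size (essentially $T^{1+\theta+o(1)}$, i.e.\ Lindel\"of on average) for $\theta$ \emph{below} the trivial/convexity threshold — this is precisely the subconvexity-strength short-interval estimate advertised in the abstract, and proving it requires a delicate spectral reciprocity / moment computation (opening the $\GL(3)\times\GL(2)$ $L$-function, applying an approximate functional equation, summing over $f$ via the Kuznetsov formula, and controlling the resulting dual sums with Voronoi on $\GL(3)$ and stationary phase). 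The secondary obstacle is obtaining the convexity-strength upper bound for the mixed moment $\sum L(1/2,\phi_f)\,L(1/2,\phi\times\phi\times\phi_f)$ uniformly, since the two $L$-functions have very different conductors ($T$ versus $\lambda_\phi T$) and a naive application of the large sieve loses too much; one must exploit the factorization and the positivity of the central values, and use Cauchy--Schwarz against a genuinely fourth-moment bound on the $\GL(2)$ side. Once both moment inputs are in place, the endgame is an optimization over the split point $\theta$ that yields the exponents $1/24$ and $1/12$, with $1/4$ under GLH.
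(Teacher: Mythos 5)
Your plan captures the paper's overall architecture: spectrally decompose $\phi^2$ by Parseval, convert periods to central $L$-values by Watson's formula, factor $L(1/2,\phi_j\times\phi\times\phi)=L(1/2,\phi_j)\,L(1/2,\phi_j\times\Sym^2\phi)$, and feed the result into Cauchy--Schwarz together with the two moment inputs (Theorems \ref{thm:moment_L_functions_6} and \ref{thm:moment_L_functions_2+8}). But there is a genuine error in step (i) that would derail the endgame. You assert that the archimedean trilinear form ``localizes the $\GL(2)$ spectral parameter to a window of length $\asymp\lambda_\phi^{1/2}$ around $0$''; that is wrong in both the center and the width. Writing $T\asymp\lambda_\phi^{1/2}$, and using the hypothesis $t_k\ll T^{o(1)}$, the product of the archimedean factors coming from $\langle\phi_k\phi,\phi_j\rangle$ and $\langle\phi_j,\phi^2\rangle$ behaves like $e^{-\frac{\pi}{2}Q(t_j;T,t_k)}$ times polynomial factors, where $Q(t_j;T,t_k)$ vanishes only on the narrow band $|t_j-T|\leq t_k$ and grows linearly in $|t_j-T|$ off it. Thus the spectral sum is truncated, up to $O(T^{-A})$, to the very short window $|t_j-T|\ll T^\varepsilon$ centered at $T$, not a window of length $\asymp T$ centered at $0$. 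That localization \emph{at} $t_j\approx T$ is precisely the conductor-dropping phenomenon that the whole estimate needs: it lowers the analytic conductor of $L(1/2,\phi_k\times\phi\times\phi_j)$ from its generic size $\asymp T^8$ to $\asymp T^{4+o(1)}$, which is why a convexity-strength mixed moment already suffices in Theorem \ref{thm:moment_L_functions_2+8}. With your window of width $\asymp T$ around $0$ the conductors do not drop, the number of contributing forms is $\asymp T^2$ rather than $\asymp T^{1+\varepsilon}$, and neither moment input is strong enough to beat the archimedean factor $T^{-3/2}$.

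Two further corrections. There is no optimization over a split point $\theta$ in step (iv): once the localization $|t_j-T|\leq T^\varepsilon$ is in hand, one applies Cauchy--Schwarz once and inserts Theorems \ref{thm:moment_L_functions_6} and \ref{thm:moment_L_functions_2+8} directly, obtaining $T^{-3/2}\cdot T^{3/4}\cdot T^{2/3}=T^{-1/12}=\lambda_\phi^{-1/24}$. What you should mention instead is the positivity trick: the first moment of $L(1/2,\phi_j\times\Sym^2\phi)$ is only established for window lengths $M\in[T^{1/3+\varepsilon},T^{1/2-\varepsilon}]$, and the nonnegativity of these central values (Lapid) is what lets one deduce the needed $T^\varepsilon$-window bound $\ll T^{4/3+\varepsilon}$ from the case $M=T^{1/3+\varepsilon}$. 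Finally, the improved exponent in \eqref{eqn:3moment_ES} does not come from ``a ratio of zeta values''; it comes from the factorization $L(1/2,E_t\times\phi\times\phi_j)^{1/2}=|L(1/2+it,\phi\times\phi_j)|$, whose second moment over the $T^\varepsilon$-window is $\ll T^{1+\varepsilon}$ by the spectral large sieve, combined with applying the pointwise Ivi\'c bound $L(1/2,\phi_j)\ll t_j^{1/3+\varepsilon}$ rather than keeping $L(1/2,\phi_j)$ inside the mixed moment; the continuous-spectrum pieces are handled similarly with the Weyl bound for $\zeta(1/2+i\tau)$ and the symmetric-square subconvexity bound applied pointwise.
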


To prove Theorems \ref{thm:HM} and \ref{thm:HM2}, we will use Watson's formula and the Rankin--Selberg theory to reduce them to moments of $L$-functions, which will be discussed in \S\ref{subsec:Moments_L_values}.
To prove \eqref{eqn:3moment_cusp}, by Parseval's formula we get
\begin{align*}
  \int_{\mathbb X} \phi_k(z) \phi(z)^3 \frac{\dd x \dd y}{y^2}
  = \langle \phi^2 , \frac{3}{\pi} \rangle \langle  1, \phi_k \phi\rangle
  & +
  \sum_{j\geq1} \langle \phi^2 , \phi_j \rangle \langle  \phi_j, \phi_k \phi\rangle
  \\
  & + \frac{1}{4\pi} \int_{\mathbb{R}}
  \langle \phi^2 , E(\cdot,1/2+i\tau) \rangle \langle  E(\cdot,1/2+i\tau), \phi_k \phi\rangle \dd \tau.
\end{align*}
%Since $\lambda_k\ll \lambda_\phi^{o(1)}$, the first term above vanishes.
For the cusp form contribution, by taking absolute value and applying Watson's formula, we arrive at
\begin{equation}\label{eqn:moment_half}
  \sum_{j\geq1} h(t_j,t_\phi,t_k) L(1/2,\phi_j)^{1/2} L(1/2,\phi_j \times \Sym^2 \phi)^{1/2} L(1/2,\phi_j \times \phi\times \phi_k)^{1/2},
\end{equation}
for a certain weight function $h(t_j,t_\phi,t_k)$ with exponential decay if $t_j-t_\phi \gg t_\phi^\varepsilon $ and of size $t_\phi^{-3/2}$ if $t_j-t_\phi\ll t_\phi^\varepsilon $.
Here it is crucial to use the nonnegativity of these central $L$-values.
See the beginning of \S \ref{sec:first_moment_L_functions} and \S \ref{sec:mixed_moment_L_functions} for the definitions of these $L$-functions.
To estimate \eqref{eqn:moment_half}, we will apply the Cauchy--Schwarz inequality to reduce it to the mixed moment of $L(1/2,\phi_j) L(1/2,\phi_j \times \phi\times \phi_k)$ and the first moment of $L(1/2,\phi_j \times \Sym^2 \phi)$.
Essentially, the cubic moment problem is reduced to the subconvexity problem of $L(1/2,\phi_j \times \Sym^2 \phi)$ with $t_j-t_\phi\ll  t_\phi^\varepsilon $.
Note that in our problem we have a family of $L$-functions of size $t_\phi$; we are lucky that the moment method works here. But for the quantitative QUE problem of Hecke--Maass cusp forms, we face the subconvexity problem of $L(1/2,\phi_j \times \Sym^2 \phi)$ with $t_j \ll  t_\phi^\varepsilon $, which is still open.

\medskip

We can do slightly better for the cubic moments of Eisenstein series and dihedral Maass forms. % for which we will give details in a follow-up paper.
QUE for Eisenstein series was proved by Luo--Sarnak \cite{LuoSarnak1995quantum}: % with a polynomial decay of the error term:
for any fixed smooth and compactly supported function $\psi:\mathbb{X}\rightarrow\mathbb{C}$, we have
\begin{equation}\label{eqn:QUE_ES}
   \langle \psi,|E(\cdot,1/2+iT)|^2 \rangle = \int_{\mathbb{X}} \psi(z) |E(z,1/2+iT)|^2  \frac{\dd x\dd y}{y^2}
   \sim \frac{6}{\pi} \langle\psi,1\rangle \log T  ,
\end{equation}
as $T\rightarrow\infty$.
Sharp $L^4$-norm bounds were proved by Spinu \cite{spinu2003norm}, Humphries \cite{humphries2017equidistribution}, and Djankovi\'c--Khan \cite{DK}.
We have
\begin{equation}\label{eqn:L4_ES}
  \int_{\mathbb X} \psi(z) |E(z,1/2+iT)|^4 \frac{\dd x\dd y}{y^2} \ll (\log T)^2,
\end{equation}
for any fixed smooth compactly supported function $\psi$ on $\mathbb X$.
  Let $T\geq10$. Then by using Zagier's regularized inner product \cite{zagier1981rankin} and our method in this paper, we can at least prove
  \begin{equation}\label{eqn:cubic_moment_ES}
    \int_{\mathbb X} \psi(z) E(z,1/2+iT)^3 \frac{\dd x\dd y}{y^2} \ll T^{-\delta},
  \end{equation}
  for any $\delta<1/6$.
%  We won't give proof details in this paper.
%\begin{remark}
%  It is possible to prove a better upper bound by our method. To make the proof short, we simply apply the spectral large sieve inequality and the subconvexity bound for $\GL(2)$ $L$-function in the spectral aspect.
%\end{remark}
%
Assume $\psi\geq0$ and $\int_{\mathbb X} \psi(z)  \frac{\dd x \dd y}{y^2} =1$.
As in \eqref{eqn:L3norm_HM},
%by QUE for Eisenstein series \eqref{eqn:QUE_ES} and $L^4$-norm bounds \eqref{eqn:L4_ES}, we have
by  \eqref{eqn:QUE_ES} and  \eqref{eqn:L4_ES}, we have
\begin{equation}\label{eqn:L3norm_ES}
  (\log T)^{3/2} \ll \int_{\mathbb X} \psi(z)  |E(z,1/2+iT)|^3 \frac{\dd x\dd y}{y^2} \ll (\log T)^{3/2}.
\end{equation}
Since we have the optimal upper bound of $L^4$-norm of Eisenstein series \eqref{eqn:L4_ES}, by smooth approximations of an indicator function and \eqref{eqn:cubic_moment_ES}, for any fixed compact domain $\Omega\subset \mathbb{X}$ with measure zero boundary, we have
\[
  \int_{\Omega}  E(z,1/2+iT)^3 \frac{\dd x\dd y}{y^2}  = o((\log T)^{3/2}),
  \quad  \textrm{as } T\rightarrow\infty.
\]
%This is the analog of Conjecture \ref{conj} for Eisenstein series.
%Our method here may prove a power saving upper bound.
%\begin{equation}\label{eqn:3moment-ES-sc}
%  \int_{\Omega}  E(z,1/2+iT)^3 \frac{\dd x\dd y}{y^2}  = O(T^{-\delta'}),
%\end{equation}
%for some positive number $\delta'$.

For  dihedral forms, QUE was proved by Sarnak \cite{sarnak2001estimates} and Liu--Ye \cite{LY}, with a polynomial decay of the error term.
Stronger $L^4$-norm estimates for dihedral Maass forms were proved by Luo \cite{luo2014norms}. Recently,  Humphries--Khan \cite{HumphriesKhan2020} proved an asymptotic formula for the $L^4$-norm of dihedral Maass forms.
We should prove a better  exponent for dihedral Maass forms than Theorem \ref{thm:HM}. \emph{Cf.}  the Eisenstein series case \eqref{eqn:cubic_moment_ES}.
The reason is that the symmetric square $L$-function of a dihedral form can be decomposed, so that we can reduce the cubic moment  to other simpler moments of $L$-functions.
%\binom{}{}We can also prove the sharp cutoff cubic moment conjecture for dihedral Maass forms as \eqref{eqn:3moment-ES-sc}.

%\emph{Cf.} \S \ref{subsec:Moments_L_values}.

%%%%%%%%%%%%%%%%%%%%%%%%%%%%%%%%%%%%%%%%%%%%%%%%%%%%%%%%%%%%%%%%
%%%%%                     Subsection                       %%%%%
%%%%%%%%%%%%%%%%%%%%%%%%%%%%%%%%%%%%%%%%%%%%%%%%%%%%%%%%%%%%%%%%

\subsection{Moments of $L$-functions}\label{subsec:Moments_L_values}

Estimating moments of $L$-functions is one of the central problems in number theory.
Let $\phi$ be a Hecke--Maass cusp form with spectral parameter $t_\phi$.
Let $\{\phi_j\}_{j\geq1}$ be an orthonormal basis of Hecke--Maass forms for $\SL(2,\mathbb{Z})$.
The first moment of $\GL(3)\times \GL(2)$ $L$-functions in short intervals
\[
    \sum_{T-M\leq t_j \leq T+M}  L(1/2,\phi_j \times \Sym^2 \phi)
    + \int_{T-M}^{T+M} |L(1/2+it , \Sym^2 \phi)|^2 \dd t
\]
was considered in Li \cite{li2011bounds} for a fixed self dual $\GL(3)$ form $\Sym^2\phi$. Note that  $t_\phi\ll 1$. Li can prove Lindel\"of on average bounds as $T$ goes to infinity when  $ T^{3/8+\varepsilon}\leq M \leq T^{1/2-\varepsilon}$, from which she obtained subconvexity bounds for those $L$-functions. McKee--Sun--Ye \cite{MSY} improved Li's result to allow $M=T^{1/3+\varepsilon}$. The best known result in this setting is given by Lin--Nunes--Qi \cite{LNQ} very recently, which allows $M=T^{1/5+\varepsilon}$.

In this paper, to prove Theorem \ref{thm:HM2} we should consider the case  $t_\phi=T$.
The coefficients of $\Sym^2 \phi$ in this setting become more complicated, as the conductors will be larger. So to prove Lindel\"of on average bounds for short moments in subconvexity strength range is challenging.
% and we will describe some difficulties in the end of this subsection.
We will extend the ideas in the above mentioned works and our techniques in \cite{huang2021rankin} and \cite{huang2021uniform} to prove Lindel\"of on average bounds when $T^{1/3+\varepsilon} \leq M \leq T^{1/2-\varepsilon}$.
%We are lucky that this sharp upper bound for the convexity range ($M=T^{1/2+o(1)}$) is good enough for our smooth cubic moment problem.
Our technical main results are the following two estimates of moments of central $L$-values, which will be used to prove \eqref{eqn:3moment_cusp}.

\begin{theorem}\label{thm:moment_L_functions_6}
  Let $\phi$ be a Hecke--Maass cusp form with the spectral parameter $T>0$.
  Let $T^{1/3+\varepsilon}\leq M\leq T^{1/2-\varepsilon}$. Then we have
  \[
    \sum_{T-M\leq t_j \leq T+M}  L(1/2,\phi_j \times \Sym^2 \phi)
    + \int_{T-M}^{T+M} |L(1/2+it , \Sym^2 \phi)|^2 \dd t \ll T^{1+\varepsilon}M.
  \]
  In particular, we have
  \[
    \sum_{T-T^\varepsilon\leq t_j \leq T+T^\varepsilon}  L(1/2,\phi_j \times \Sym^2 \phi) \ll T^{4/3+\varepsilon}.
  \]
\end{theorem}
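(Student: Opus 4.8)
The plan is to open both central values by approximate functional equations, run the Kuznetsov trace formula in the spectral aspect, and estimate the resulting off-diagonal term by $\GL(3)$ Voronoi summation. Using the (known) nonnegativity of $L(1/2,\phi_j\times\Sym^2\phi)$ and of $|L(1/2+it,\Sym^2\phi)|^2$, I would first attach a smooth even majorant $h$ with $h\ge 0$ on $\mathbb{R}$, $h\ge 1$ on $[T-M,T+M]$, and $\supp h\subset[T-2M,T+2M]$, so that it suffices to bound $\sum_j h(t_j)L(1/2,\phi_j\times\Sym^2\phi)$ from above together with the corresponding continuous integral. Write $F=\Sym^2\phi$. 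For $t_j\asymp T$ the six archimedean parameters of $\phi_j\times F$ all have size $\asymp T$ (here the hypothesis $M\le T^{1/2-\varepsilon}$ is used to keep $|t_j-2T|\asymp T$), so the analytic conductor is $\asymp T^6$ and the approximate functional equation has length $T^{3+\varepsilon}$. Using the Dirichlet series $L(s,\phi_j\times F)=\sum_{m,n}A_F(m,n)\lambda_{\phi_j}(n)(m^2n)^{-s}$ this yields
\[
  \sum_j h(t_j)L(\tfrac12,\phi_j\times F)=\sum_{m^2n\ll T^{3+\varepsilon}}\frac{A_F(m,n)}{(m^2n)^{1/2}}\,W\!\Big(\frac{m^2n}{T^3}\Big)\sum_j h(t_j)\lambda_{\phi_j}(n)+(\text{dual term}),
\]
with $W$ a fixed smooth rapidly-decaying weight whose mild $t_j$-dependence is routine to remove; the dual term is treated identically.

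Applying Kuznetsov to the inner sum, the diagonal $n=1$ contributes $\asymp\big(\sum_m A_F(m,1)m^{-1}W(m^2/T^3)\big)\int h(t)\,t\tanh(\pi t)\,dt\asymp L(1,\Sym^2\phi)\cdot TM\ll T^{1+\varepsilon}M$, using $L(1,\Sym^2\phi)\ll T^{\varepsilon}$. The continuous-spectrum term of Kuznetsov, once summed back against $A_F(m,n)(m^2n)^{-1/2}W$, reassembles into a positive multiple of $\int h(t)\,|L(1/2+it,\Sym^2\phi)|^2\,|\zeta(1+2it)|^{-2}\,dt\ge 0$; since this term lies on the spectral side of Kuznetsov and is nonnegative, and since $|\zeta(1+2it)|^{-2}\gg(\log T)^{-2}$ for $t\asymp T$, it absorbs the continuous integral $\int_{T-M}^{T+M}|L(1/2+it,\Sym^2\phi)|^2\,dt$ of the statement up to a factor $(\log T)^2$. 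Everything therefore reduces to showing that the Kloosterman-sum term
\[
  K=\sum_{c\ge 1}\frac1c\sum_{m^2n\ll T^{3+\varepsilon}}\frac{A_F(m,n)}{(m^2n)^{1/2}}\,S(n,1;c)\,\mathcal H^+\!\Big(\frac{4\pi\sqrt n}{c}\Big)
\]
satisfies $K\ll T^{1+\varepsilon}M$, where $\mathcal H^+$ is the Bessel transform of $h$. The size and support of $\mathcal H^+$ for a window of width $M$ at height $T$ (from the uniform asymptotics of $J_{2it}$, as in \cite{li2011bounds,MSY,LNQ}) restrict $c$ and localize $n$ to short dyadic ranges, but the trivial estimate for $K$ --- using Weil's bound $|S(n,1;c)|\ll c^{1/2+\varepsilon}$ and the Rankin--Selberg bound $\sum_{m^2n\le X}|A_F(m,n)|^2\ll X^{1+\varepsilon}T^{\varepsilon}$, itself a consequence of $L(s,\Sym^2\phi\times\Sym^2\phi)=\zeta(s)L(s,\Sym^2\phi)L(s,\Sym^4\phi)$ and $L(1,\Sym^2\phi),L(1,\Sym^4\phi)\ll T^{\varepsilon}$ --- exceeds $T^{1+\varepsilon}M$ by a positive power of $T$, so cancellation in the $n$-sum is indispensable.

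To extract it I would open $S(n,1;c)$ and apply $\GL(3)$ Voronoi summation to the $n$-sum twisted by the additive character modulo $c$; this replaces it by a much shorter dual sum in the coefficients $A_F(n_1,n_2)$ ($n_1\mid c$), carrying an arithmetic factor that is a Kloosterman (or Ramanujan) sum modulo $c/n_1$ and a Hankel-type integral transform $\widetilde W$, whose amplitude and oscillation I would determine by stationary phase, balancing the $\GL(3)$ Bessel phase against the known shape of $\mathcal H^+$. Estimating the resulting sum over $c$, $n_1\mid c$ and $n_2$ by taking absolute values, using Weil for the residual Kloosterman sums and Rankin--Selberg for $A_F(n_1,n_2)$, the power of $T$ that must be saved over the trivial bound is recovered exactly when $M\ge T^{1/3+\varepsilon}$; the constraint $M\le T^{1/2-\varepsilon}$ is what keeps $\mathcal H^+$, the range of $c$, and the stationary-phase analysis in their clean regimes. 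The main obstacle is that $\Sym^2\phi$ is not a fixed form: its conductor $\asymp T^2$ has the same polynomial size as the spectral window, so the $\GL(3)$ Voronoi transform and all the Bessel/stationary-phase estimates carry genuine $T$-dependence that cannot be swept into $\varepsilon$-powers, and the conductor must be tracked exactly at every step --- this is precisely where the techniques of \cite{huang2021rankin,huang2021uniform}, refining those of \cite{li2011bounds,MSY,LNQ}, enter. Finally, the stated consequence $\sum_{T-T^{\varepsilon}\le t_j\le T+T^{\varepsilon}}L(1/2,\phi_j\times\Sym^2\phi)\ll T^{4/3+\varepsilon}$ follows by taking $M=T^{1/3+\varepsilon}$ and discarding the nonnegative continuous term.
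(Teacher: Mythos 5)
Your architecture agrees with the paper's up through the application of the Kuznetsov formula and the first $\GL(3)$ Voronoi summation, and your accounting of the diagonal and of the conductor of $\phi_j\times\Sym^2\phi$ is right. But the proof breaks down exactly where you close it off: you claim that after one application of $\GL(3)$ Voronoi, \emph{taking absolute values} of the dual $n$-sum (with Weil for the arithmetic factor and Rankin--Selberg for the coefficients) already gives the bound $T^{1+\varepsilon}M$ in the range $M\ge T^{1/3+\varepsilon}$. This is not so. After the first Voronoi, the dual sum is supported on $n\asymp N^{1/2}\le T^{3/2+\varepsilon}$ (the paper's $\mathcal R^{\pm}(N)$ computation), and the trivial estimate of that sum, after summing over the modulus $r\ll\sqrt{N}/(TM)$, gives something of size $\sqrt N\ll T^{3/2+\varepsilon}$. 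That is $\ll T^{1+\varepsilon}M$ only for $M\ge T^{1/2}$; for $M$ near $T^{1/3}$ you are off by a factor of roughly $T^{1/6}$. The paper says this explicitly: a single Voronoi application gives the first moment bound only down to $M=T^{1/2+\varepsilon}$, which is insufficient.

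What is actually needed --- and what constitutes the core of the paper's argument in \S3.4--3.5 --- is a second application of the $\GL(3)$ Voronoi summation, applied to the $n$-sum that emerges from the first Voronoi. Two nontrivial facts make this possible. First, the arithmetic character $e(\pm n/r)$ produced by Voronoi (note: an additive character, not a residual Kloosterman sum) cancels, in the $J$-Bessel case, against the phase $e(\mp n/r)$ coming out of the stationary-phase analysis of the Hankel transform and the Kuznetsov kernel; the residual phase has magnitude $\asymp T/M$ rather than $T$ (the paper's Lemma 3.7 and equation (3.12)). Second, because the remaining phase is smooth and characterless, one applies the bare functional equation of $L(s,\Sym^2\phi)$ (Lemma 3.8) as a ``Voronoi with trivial character''; since $\Sym^2\phi$ has archimedean parameters $0,\pm 2iT$, the new conductor is of size $T^3/M$ and the second dual length drops to $T^{3/2}/M$, saving a factor $M^{1/2}$ and giving the off-diagonal bound $T^{3/2+\varepsilon}/M^{1/2}\ll T^{1+\varepsilon}M$ precisely when $M\ge T^{1/3+\varepsilon}$. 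You also do not address the $K$-Bessel terms $\mathcal R^-$, which the paper must handle by a separate and more delicate argument: there the character $e(\pm n/r)$ does not cancel, but it oscillates at the same scale as the weight function and so can be absorbed as an analytic weight before the second Voronoi. In short: your claim that one Voronoi and absolute values suffice for $M\ge T^{1/3+\varepsilon}$ is false, and the second Voronoi (together with the phase cancellation that makes it effective) is the missing idea.
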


\begin{theorem}\label{thm:moment_L_functions_2+8}
  Let $\phi$ be a Hecke--Maass cusp form with the spectral parameter $T>0$.
  Let $\phi_k$ be a Hecke--Maass cusp form with the spectral parameter $t_k>0$.
  Assume $t_k\leq T^\varepsilon$.  Then we have
  \[
    \sum_{T-T^{\varepsilon}\leq t_j \leq T+T^{\varepsilon}} L(1/2,\phi_j) L(1/2,\phi_j \times \phi\times \phi_k) \ll T^{3/2+\varepsilon}.
  \]
\end{theorem}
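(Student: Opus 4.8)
The plan is to bound the mixed moment by the Cauchy--Schwarz inequality, splitting it into a second moment of each of the two $L$-functions, and then to estimate each of these second moments by an approximate functional equation together with the spectral large sieve inequality in the short window $|t_j-T|\le T^\varepsilon$. Note first that pointwise bounds cannot suffice: the window contains $\asymp T^{1+\varepsilon}$ forms $\phi_j$, while the convexity bounds $L(1/2,\phi_j)\ll T^{1/2+\varepsilon}$ and $L(1/2,\phi_j\times\phi\times\phi_k)\ll T^{1+\varepsilon}$ only yield $T^{5/2+\varepsilon}$ in total; the required saving must come from the spectral average.

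Since $L(1/2,\phi_j)\ge0$ (Katok--Sarnak) and $L(1/2,\phi_j\times\phi\times\phi_k)\ge0$ (by Watson's formula), we may insert a smooth, even, nonnegative majorant $h$ with $h(t)\ge1$ for $|t|\in[T-T^\varepsilon,T+T^\varepsilon]$ which decays rapidly away from $\pm T$, and we include the continuous spectrum freely since it only enlarges the sum. By Cauchy--Schwarz it then suffices to prove
\[
\sum_j h(t_j)\,L(1/2,\phi_j)^2\ll T^{1+\varepsilon}\qquad\text{and}\qquad\sum_j h(t_j)\,L(1/2,\phi_j\times\phi\times\phi_k)^2\ll T^{2+\varepsilon}.
\]
The first follows from the approximate functional equation for $L(1/2,\phi_j)$ (conductor $\asymp T^2$), which writes it as $\sum_{n\ll T^{1+\varepsilon}}\lambda_j(n)n^{-1/2}W_j(n)$ up to a negligible error, followed by the spectral large sieve $\sum_{|t_j-T|\le T^\varepsilon}|\sum_{n\le N}a_n\lambda_j(n)|^2\ll(T^{1+\varepsilon}+N)\|a\|_2^2$ with $N\asymp T^{1+\varepsilon}$ and $\|a\|_2^2\asymp\log T$; this is in any case a known second moment estimate.

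The second estimate is the crux. Since $t_j\asymp t_\phi=T$ and $t_k\le T^\varepsilon$, four of the eight archimedean parameters of $L(s,\phi_j\times\phi\times\phi_k)$ are $\asymp 2T$ and the other four are $\lesssim T^\varepsilon$, so its analytic conductor is $\asymp T^{4+o(1)}$ and its approximate functional equation has effective length $\asymp T^{2+o(1)}$. Writing $\phi\times\phi_k$ as the isobaric form $\Phi$ on $\GL(4)$ --- cuspidal in the generic case; the dihedral $\phi_k$, for which $\Phi$ decomposes and the problem becomes easier, being treated separately as indicated in the remarks --- one expands $L(1/2,\phi_j\times\phi\times\phi_k)$ via the $\GL(2)\times\GL(4)$ Rankin--Selberg structure, so that $\phi_j$ enters only through $\lambda_j(n)$ with $n\ll T^{2+\varepsilon}$ and the rest is a sum over the Hecke coefficients of $\Phi$ with two auxiliary variables. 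Applying the spectral large sieve to the $n$-sum and bounding the coefficients of $\Phi$ by their Rankin--Selberg mean-square estimates, uniformly in the auxiliary variables, gives $\ll T^{2+\varepsilon}$. Combining the two estimates yields the mixed moment $\ll(T^{1+\varepsilon})^{1/2}(T^{2+\varepsilon})^{1/2}=T^{3/2+\varepsilon}$, which is the assertion.

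The main obstacle is precisely the uniform control, over the auxiliary summation variables, of the mean value of the Hecke coefficients of $\Phi=\phi\boxtimes\phi_k$ --- whose conductor is now of size $T^4$ rather than $O(1)$, so that these coefficients are considerably more involved than in the classical setting of a fixed $\GL(3)$ or $\GL(4)$ form. Handling this requires exploiting that $\Phi$ is a Rankin--Selberg lift of $\GL(2)$ forms, via a $\GL(2)$-Voronoi summation and a careful dyadic decomposition, in the spirit of the techniques of \cite{huang2021rankin,huang2021uniform}. Because the spectral large sieve is applied sharply here, the resulting bound is only of \emph{convexity strength}; upgrading the triple product second moment to the conjecturally optimal $T^{1+\varepsilon}$ would require genuine cancellation in the arising Kloosterman sums, which is not needed for the present theorem.
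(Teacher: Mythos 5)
Your overall skeleton matches the paper exactly: apply Cauchy--Schwarz to split off the known bound $\sum_{|t_j-T|\le T^\varepsilon}L(1/2,\phi_j)^2\ll T^{1+\varepsilon}$, then reduce to the second moment $\sum_{|t_j-T|\le T^\varepsilon}L(1/2,\phi_j\times\phi\times\phi_k)^2\ll T^{2+\varepsilon}$, treat $\Phi=\phi\boxtimes\phi_k$ as a cusp form on $\GL(4)$ via Ramakrishnan, expand by the approximate functional equation (length $T^{2+\varepsilon}$), and apply the spectral large sieve in the $\phi_j$-variable. This is precisely what the paper does.

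Where your account goes astray is in the final step, which you describe as ``the main obstacle,'' handled ``via a $\GL(2)$-Voronoi summation and a careful dyadic decomposition, in the spirit of \cite{huang2021rankin,huang2021uniform}.'' This is not what is needed, and it is not what the paper does. After the large sieve one is left with an elementary mean-square
$\sum_{n\le T^{2+\varepsilon}}\sum_{m\le T^{1+\varepsilon}/\sqrt n}|A(1,m,n)|^2/(mn)$
of the coefficients of $\Phi$, and this is controlled by the Ramanujan-on-average bound (Lemma \ref{lemma:Ramanujan} in the paper): $\sum_{m\sim M}\sum_{n\sim N}|A(1,m,n)|^2\ll (TMN)^\varepsilon MN$. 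That lemma is a standard Rankin--Selberg argument in the style of Chandee--Li, and the $T$-uniformity comes from two clean inputs: Kim's automorphy of the exterior square $\wedge^2\Phi$ (so the Rankin--Selberg $L$-function of $\Phi$ is available), and Li's uniform bounds at the edge of the critical strip showing the implied constant is $T^{o(1)}$. No Voronoi summation for $\Phi$ enters, and there are no Kloosterman sums anywhere in this argument (your closing remark about needing cancellation in Kloosterman sums for a sharper bound is a reasonable heuristic about what replacing the large sieve by Kuznetsov might offer, but is tangential). The techniques of \cite{huang2021rankin,huang2021uniform} that you invoke are used for the other moment theorem (the first moment of $\GL(3)\times\GL(2)$), not here. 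Likewise, the parenthetical about the dihedral case of $\phi_k$ is a red herring: Ramakrishnan's theorem gives cuspidality of $\phi\boxtimes\phi_k$ whenever $\phi\neq\phi_k$ at level one with trivial nebentypus, and that hypothesis is automatic here since $t_k\le T^\varepsilon<T$. So your proof sketch is structurally correct, but you have significantly over-engineered the Fourier-coefficient step, which in the paper is the easy part.
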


Our first novelty here is that in \eqref{eqn:moment_half} we group $L(1/2,\phi_j)$ and  $L(1/2,\phi_j \times \phi\times \phi_k)$ together and find the convexity strength upper bound in Theorem \ref{thm:moment_L_functions_2+8}. We can prove Theorem \ref{thm:moment_L_functions_2+8} because the analytic conductor of $L(1/2,\phi_j \times \phi\times \phi_k)$ is relatively small thanks to the conductor dropping phenomenon $t_j=T+O(T^\varepsilon)$ and that the factor $L(1/2,\phi_j)$ is relatively simple. The proof is based on the Cauchy--Schwarz inequality and the  spectral large sieve.
Since we do not have a good Dirichlet series expression of $L(s,\phi_j \times \phi\times \phi_k)$ in terms of the Fourier coefficients of $\phi_j,\phi,\phi_k$, we will use the fact $\Phi=\phi\times \phi_k$ is a cusp form for $\SL(4,\mathbb{Z})$ due to Ramakrishnan \cite{Ramakrishnan}.

Our second novelty here is that we enlarge the interval of the moment in Theorem \ref{thm:moment_L_functions_6}. To prove \eqref{eqn:3moment_cusp} we actually need the case with $M=T^\varepsilon$ as in Theorem \ref{thm:moment_L_functions_2+8}.
However, it seems hard to prove nontrivial upper bound when $M=T^\varepsilon$ directly.
Because of the nonnegativity of the central values $L(1/2,\phi_j \times \Sym^2 \phi)$ (see Lapid \cite{lapid2003}), we are allowed to consider longer interval averages.
\emph{Cf.} Lin--Nunes--Qi \cite{LNQ}.
Noting that as long as $M\ll T^{1-\varepsilon}$, the size of the analytic conductor of $L(1/2,\phi_j \times \Sym^2 \phi)$ will not change.  If $M=T^{1/2+\varepsilon}$, the off-diagonal terms with $J$-Bessel function will be arbitrarily small. Hence one may prove the Lindel\"of on average bounds for this first moment when $M=T^{1/2+\varepsilon}$ rather easily.
But this is still not enough for our purpose. So we need to deal with the case $M=T^{1/2-\delta}$ for some positive $\delta$.
This moment itself is very interesting, since the Lindel\"of on average bounds can prove the subconvexity bounds for those $L$-functions. We have the following result.

\begin{corollary}\label{cor}
  Let $\phi$ be a Hecke--Maass cusp form with the spectral parameter $T>0$.
  Let $\phi_j$ be a Hecke--Maass cusp form with the spectral parameter $t_j>0$.
  Let $t\in \mathbb{R}$.
  Assume $|t_j-T|\leq T^{1/3}$ and $||t|-T|\leq T^{1/3}$. Then we have
  \[
    L(1/2,\phi_j\times \Sym^2 \phi) \ll T^{4/3+\varepsilon}
  \]
  and
  \[
    L(1/2+it,  \Sym^2 \phi) \ll T^{2/3+\varepsilon}.
  \]
\end{corollary}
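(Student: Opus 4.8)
The plan is to read off the two pointwise bounds from the averaged estimate of Theorem~\ref{thm:moment_L_functions_6} by taking $M=T^{1/3+\varepsilon}$, exploiting positivity for the first bound and a short-interval mean-square argument for the second. For the first bound I would apply Theorem~\ref{thm:moment_L_functions_6} with $M=T^{1/3+\varepsilon}$, which is admissible since $T^{1/3+\varepsilon}\le T^{1/2-\varepsilon}$. As $|t_j-T|\le T^{1/3}<M$, the value $L(1/2,\phi_j\times\Sym^2\phi)$ occurs among the summands on the left-hand side; since every central $\GL(3)\times\GL(2)$ value is nonnegative by Lapid~\cite{lapid2003} and $|L(1/2+it,\Sym^2\phi)|^2\ge 0$ trivially, all other terms and the entire integral may be discarded, giving $L(1/2,\phi_j\times\Sym^2\phi)\ll T^{1+\varepsilon}M=T^{4/3+\varepsilon}$.

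For the second bound, first note that $L(s,\Sym^2\phi)$ has real Dirichlet coefficients, so $|L(1/2+it,\Sym^2\phi)|=|L(1/2+i|t|,\Sym^2\phi)|$ and I may assume $t=|t|\ge 0$, whence $|t-T|\le T^{1/3}$. Throughout the window $s\in[t-T^\varepsilon,t+T^\varepsilon]$ the analytic conductor of $L(1/2+is,\Sym^2\phi)$ is $\asymp T^3$ --- the archimedean shifts $s$ and $s\pm 2T$ all have modulus $\asymp T$, with no conductor drop --- so the balanced approximate functional equation writes $L(1/2+is,\Sym^2\phi)$ as two Dirichlet sums of length $\asymp T^{3/2}$ whose weights vary in $s$ only on scale $T^{1-o(1)}$, times a root-number factor that oscillates at frequency $\ll\log T$. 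Hence $s\mapsto L(1/2+is,\Sym^2\phi)$ is effectively band-limited to frequencies $\ll\log T$, and a Sobolev-type (band-limited sampling) inequality --- together with the convexity bound $L(1/2+is,\Sym^2\phi)\ll T^{3/4+\varepsilon}$ to dispose of the rapidly decaying tails --- gives
\[
  |L(1/2+it,\Sym^2\phi)|^2\ \ll\ T^{\varepsilon}\int_{t-T^\varepsilon}^{t+T^\varepsilon}|L(1/2+is,\Sym^2\phi)|^2\,\dd s\ +\ T^{-100}.
\]
Since $T^{1/3}+T^\varepsilon<M=T^{1/3+\varepsilon}$ the interval of integration lies inside $[T-M,T+M]$, so by the Eisenstein part of Theorem~\ref{thm:moment_L_functions_6} the integral is $\ll T^{1+\varepsilon}M=T^{4/3+\varepsilon}$, and therefore $L(1/2+it,\Sym^2\phi)\ll T^{2/3+\varepsilon}$.

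The one step with genuine content is the displayed inequality for the Eisenstein piece: one must make precise that $L(1/2+is,\Sym^2\phi)$ varies slowly enough in $s$ that its value at $t$ is controlled by its mean square over a window of length $T^\varepsilon$. This is standard once one verifies that the conductor is $\asymp T^3$ uniformly over the window, so that the cutoff in the approximate functional equation is uniformly $\asymp T^{3/2}$ and all relevant frequencies are $\ll\log T$; it is here that the subconvexity-strength range $T^{1/3+\varepsilon}\le M\le T^{1/2-\varepsilon}$ of Theorem~\ref{thm:moment_L_functions_6} is exactly what is needed --- $M$ must exceed $T^{1/3}$ for a nondegenerate window around $t$ (resp.\ a nonempty discrete sum containing $t_j$) to survive, while $M\le T^{1/2-\varepsilon}$ keeps the moment of subconvex length --- and the resulting exponents $4/3$ and $2/3$ beat the respective convexity exponents $3/2$ and $3/4$ by $T^{1/6}$ and $T^{1/12}$.
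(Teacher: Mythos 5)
Your proof is correct and fills in the standard deduction the paper leaves implicit: take $M=T^{1/3+\varepsilon}$ in Theorem~\ref{thm:moment_L_functions_6}, drop all but the desired term by Lapid's nonnegativity for the discrete piece, and pass from the short-interval mean square to the point value by a Gallagher/sampling-type inequality for the continuous piece (noting the window $[t-T^\varepsilon,t+T^\varepsilon]\subset[T-M,T+M]$ since $T^{1/3}+T^\varepsilon<T^{1/3}\cdot T^\varepsilon=M$). The one place a careful write-up should be more explicit is the sampling step: state the lemma as $|D(t_0)|^2\ll(\log N)\int_{|t-t_0|\le T^\varepsilon}|D(t)|^2\,\dd t+O(T^{-A})$ for a Dirichlet polynomial of length $N\asymp T^{3/2}$ (via a Fej\'er/Beurling-type kernel with $\widehat K\ge1$ on $[-\log N,\log N]$ and rapid decay, using the convexity bound to dispose of the tail), but your justification through the uniform conductor $\asymp T^3$, the slowly varying weights, and the $\ll\log T$ frequency of the root-number factor is exactly the right content.
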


\begin{remark}
  The convexity bound is $L(1/2,\phi_j\times \Sym^2 \phi) \ll T^{3/2+\varepsilon}$.
  One may extend our proof of Theorem  \ref{thm:moment_L_functions_6} to more general setting to prove subconvexity bounds of $L(1/2,\phi_j\times \Sym^2 \phi)$ when $\eta T \leq t_j\leq ( 2-\eta)T$ and $t_j\geq (2+\eta)T$ for any fixed small positive constant $\eta$.
  Note that Nelson's work \cite{Nelson} implies a weaker subconvexity bound $L(1/2,\phi_j\times \Sym^2 \phi) \ll T^{3/2-1/13800+\varepsilon}$, but his method works for a much more general setting.
  For the symmetric square $L$-functions, Khan--Young \cite{KY} proved this subconvexity bound by a different method.
\end{remark}

%\begin{remark}
%  It is possible that one can use the spectral reciprocity formulaes in Kwan \cite{Kwan} and Humphries--Khan \cite{HumphriesKhan2020} to estimate the first moment in Theorem \ref{thm:moment_L_functions_6}. The main difficulty will still be the complicated integral transforms of the weight functions.
%\end{remark}

In the end of this subsection, we discuss some difficulties in the proof of Theorem \ref{thm:moment_L_functions_6}. As usual, we will apply the approximate functional equation and Kuznetsov trace formula to the first moment we want to estimate.
Then we need to apply the $\GL(3)$ Voronoi summation formula. We will use a version of Miller--Zhou \cite{MZ} (Lemma \ref{lemma:VSF}) as in Lin--Nunes--Qi's recently work \cite{LNQ}.
%We found that this will not just be simpler as mentioned in \cite{LNQ}, but also give us more saving (roughly the same size as the moduli). More importantly, the usual method to deal with the integral transform after the Voronoi summation formula does not work here. 可能说的不对
The usual method to deal with the integral transforms after the Voronoi summation formula does not work here, since \cite[Lemma 2.1]{li2011bounds} and \cite[Lemma 6]{blomer2012subconvexity} work for fixed $\GL(3)$ forms.

We first consider the terms with $J$-Bessel function.
In \S\ref{subsec:IT}, we deal with this by applying the stationary phase method three times carefully.
We make full use of the observation that the Kuznetsov trace formula provides us a rather explicit and simple oscillating factor (see e.g.\ Young \cite{young2017}), so that we can apply the stationary phase method to the Mellin transform first.
It is important to obtain the factor $e(\mp n/r)$ in the main term of the integral transform which will cancel out with the arithmetic character in the right-hand side of the Voronoi summation formula. This observation goes back to Conrey--Iwaniec \cite{CI}.
Notice that the size of the remaining phase function is $\ll T/M$ instead of $T$. See \eqref{eqn:R+<<sum}.
This is key to our proof. See \cite[\S4]{huang2021uniform} for a similar phenomenon.
Without working out this explicitly, one may imagine that the size should be roughly $T$ which is the size of the maximal spectral parameter of $\Sym^2 \phi$, and then we may not expect to have additional saving by using the $\GL(3)$ Voronoi summation formula again.
After doing this,  typically the $\GL(3)$ $n$-sum will be of size $T^{3/2}$.
Now as in \cite{li2011bounds} and \cite{LNQ}, we need to apply  the $\GL(3)$ Voronoi summation formula for the second time. In fact, we apply the functional equation instead since there is no arithmetic character now. Note that the spectral parameters of $\Sym^2 \phi$ are $0,\pm2iT$. The conductor will be $T^3/M$. So the dual length will be of size $T^{3/2}/M$ and we get additional saving $M^{1/2}$. So the off-diagonals can be bounded by $O(T^{3/2+o(1)}/M^{1/2})$, which will be $O(TM)$ if $M\geq T^{1/3+\varepsilon}$. We remark that the conductor in the final step in our case is larger than the one in \cite{li2011bounds,LNQ} which is roughly $T^3/M^3$.

We still need to estimate the terms with $K$-Bessel function. Unlike the treatments  in \cite{li2011bounds,LNQ}, we have to work hard as in the $J$-Bessel case.
Together with a smooth dyadic decomposition of intervals (see Lemma \ref{lemma:I-}), we still can treat the integrals efficiently. But this time, we cannot deal with the Mellin transform by the stationary phase method. As in \cite{li2011bounds}, we use the Fourier transform to handle the integral coming from the Kuznetsov formula. After a smooth partition of unity, we can estimate one of the integrals coming from the $\GL(3)$ Voronoi summation formula via the stationary phase method. For another one, we can restrict its support. In \cite{li2011bounds}, it is enough to apply  trivial estimates at this step to finish the proof. But in our setting, this is not sufficient. Although there will be no factor $e(\mp n/r)$ to cancel out the additive character $e(\pm n/r)$ as in the $J$-Bessel case (see \eqref{eqn:R-<<sum}), we notice that the size of $n/r$ is the same as the oscillation in the weight function (see \eqref{eqn:R-<<sum_Upsilon}). So we view $e(\pm n/r)$ as an analytic weight function and apply the $\GL(3)$ Voronoi summation formula for the second time as in the $J$-Bessel case. This is comparable to Young \cite{young2017}.

It is possible to give a second proof of Theorem \ref{thm:moment_L_functions_6} by the spectral reciprocity formula. See e.g. \cite{Kwan} and \cite{HumphriesKhan}. As pointed out in \cite[P. 9]{HumphriesKhan}, the short length of the moment means that
the analysis of the size and length of the transform $\mathcal H$ becomes significantly more challenging. Our treatment of the integral transforms may overcome the challenge.

\subsection{Plan for this paper}
The rest of this paper is organized as follows.
In \S \ref{sec:preliminaries}, we give a review of the theory of automorphic forms, Kuznetsov trace formula, and the spectral large sieve inequality.
In \S \ref{sec:first_moment_L_functions}, we prove Theorem \ref{thm:moment_L_functions_6}.
In \S \ref{sec:mixed_moment_L_functions}, we prove Theorem \ref{thm:moment_L_functions_2+8}.
Finally,  in \S \ref{sec:cubic_moment} we will prove Theorems \ref{thm:HM} and \ref{thm:HM2} by using Theorems \ref{thm:moment_L_functions_6} and \ref{thm:moment_L_functions_2+8} and also other moments of $L$-functions.

\medskip
\textbf{Notation.}
Throughout the paper, $\varepsilon$ is an arbitrarily small positive number;
all of them may be different at each occurrence.
%The weight functions $U,\ V,\ W$ may also change at each occurrence.
As usual, $e(x)=e^{2\pi i x}$.
We use $y\asymp Y$ to mean that $c_1 Y\leq |y|\leq c_2 Y$ for some positive constants $c_1$ and $c_2$.
%The notation $x\lll y$ means we have $x\leq cy$ for some sufficiently small constant $c>0$, and $x\ggg y$ means we have $x\geq Cy$ for some sufficiently large constant $C>0$.
%The symbol $\ll_{a,b,c}$ denotes that the implied constant depends at most on $a$, $b$ and $c$,
%and $q\sim P$ means $P<q\leq 2P$.

%%%%%%%%%%%%%%%%%%%%%%%%%%%%%%%%%%%%%%%%%%%%%%%%%%%%%%%%%%%%%%%%
%%%%%                        Section                       %%%%%
%%%%%%%%%%%%%%%%%%%%%%%%%%%%%%%%%%%%%%%%%%%%%%%%%%%%%%%%%%%%%%%%

\section{Preliminaries} \label{sec:preliminaries}

\subsection{Automorphic forms}

Let $\{\phi_j\}_{j\geq1}$ be an orthonormal basis of Hecke--Maass cusp forms for $\SL(2,\mathbb{Z})$  such that $\phi\in\{\phi_j\}_{j\geq1}$.
We can assume all $\phi_j$ are real valued.
Denote the spectral parameter of $\phi_j$ by $t_j>1$ and the Fourier coefficients by $\lambda_j(n)$. Write $s_j=1/2+it_j$.
Let $E(z,s)$ be the standard Eisenstein series. Denote $E_t(z)=E(z,1/2+it)$.
Let $U(z)$ be one of the two functions $\phi_j(z)$ and  $E_t(z)$.
Each $U(z)$ has the Fourier expansion
\begin{equation}
\label{eq:UFourier}
 U(z) = c_0(y) + \rho(1) \sum_{n \neq 0} \frac{\lambda(n)}{\sqrt{|n|}} W_{s}(nz),
\end{equation}
with additional notation as follows.  Here $c_0(y) = c_0(y,s)$ is the constant term in the Fourier expansion which is nonzero only in the Eisenstein case for which
$$c_0(y,s) = y^{s} + \varphi(s) y^{1-s}$$
with
$$
  \varphi(s) = \frac{\theta(1-s)}{\theta(s)}
  \quad \textrm{and} \quad
  \theta(s) = \pi^{-s} \Gamma(s) \zeta(2s) = \Lambda(2s).
$$
Note $|\varphi(1/2 + it)| =1$.
Here $\lambda(n)$ are Hecke eigenvalues which on the Ramanujan conjecture are bounded in absolute value by the divisor function $d(n)$, and
\begin{equation}
 \label{eq:Wdef}
 W_s(z) := 2 \sqrt{|y|} K_{s-1/2}(2\pi|y|)e(x)
\end{equation}
is the Whittaker function.
%In the Eisenstein cases, $\lambda(-n) = \lambda(n)$ since $\phi_j$ is assumed to be even, while in the holomorphic case $\lambda(n) = 0$ for $n < 0$.
In the case $U$ is the Eisenstein series $E_t$,
$$\lambda_t(n) = \eta_{t}(n) := \sum_{ab = |n|} (a/b)^{it},$$
and with the above normalization of the constant term, we have
\begin{equation}
 \label{eq:rho1def}
 \rho_t(1) = 1/\theta(1/2 + it).
\end{equation}
Then by Stirling's formula and standard bounds for the Riemann zeta function, we deduce
\begin{equation}
\label{eq:rho1squared}
 |\rho_t(1)|^2 =   \frac{ \cosh(\pi t)}{ |\zeta(1+2it)|^2} = (1+|t|)^{o(1)} \exp(\pi |t|).
\end{equation}
In the case $U$ is a Hecke--Maass cusp form $\phi_j$, we have
\begin{equation}\label{eq:rho1squared-c}
  |\rho_j(1)|^2 =   \frac{ \cosh(\pi t_j)}{L(1,\Sym^2 \phi_j)} = t_j^{o(1)} \exp(\pi t_j).
\end{equation}

\subsection{Kuznetsov trace formula}
Define the harmonic weights
\[
  \omega_j = \frac{4\pi |\rho_j(1)|^2 }{ \cosh(\pi t_j) } = \frac{4\pi}{L(1,\Sym^2 \phi_j)}
  \quad  \textrm{and} \quad
  \omega(t) = \frac{4\pi |\rho_t(1)|^2}{\cosh(\pi t)} = \frac{4\pi}{|\zeta(1+2it)|^2}.
\]
By \cite{iwaniec1990small}, \cite{Hoffstein-Lockhart} and the standard bounds of the Riemann zeta function, we have
\[
  (\log t_j)^{-1} \ll \omega_j \ll \log t_j, \quad
  (\log (1+|t|))^{-1} \ll \zeta(1+2it) \ll \log (1+|t|).
\]
Let
\[
  S(a,b;c) = {\sum_{d(c)}}^* e\left(\frac{ad+b\bar{d}}{c}\right)
\]
be the classical Kloosterman sum. For any test function
$h(t)$ which is even and satisfies the following conditions:
\begin{itemize}
  \item [(i)] $h(t)$ is holomorphic in $|\Im(t)|\leq 1/2+\varepsilon$,
  \item [(ii)] $h(t)\ll (1+|t|)^{-2-\varepsilon}$ in the above strip,
\end{itemize}
we have the following Kuznetsov formula (see~\cite[Eq. (3.17)]{CI} for example).
\begin{lemma}\label{lemma: KTF}
  For $m,n\geq1$, we have
  \begin{equation*}
    \begin{split}
        & {\sum_j}'h(t_j)\omega_j \lambda_j(m)\lambda_j(n) + \frac{1}{4\pi}\int_{-\infty}^{\infty}h(t)\omega(t)\eta_t(m)\eta_t(n)\dd t \\
        & \hskip 120pt = \frac{1}{2}\delta_{m,n}H + \frac{1}{2}\sum_{c\geq1}\frac{1}{c}\sum_{\pm} S(n,\pm m;c)H^{\pm}\left(\frac{4\pi\sqrt{mn}}{c}\right),
    \end{split}
  \end{equation*}
  where $\sum'$ restricts to the even Hecke--Maass cusp forms, $\delta_{m,n}$ is the Kronecker symbol,
  \begin{equation}\label{eqn: H}
    \begin{split}
       H & = \frac{2}{\pi}\int_{0}^{\infty} h(t) \tanh(\pi t)t \dd t, \\
       H^+(x) & = 2i \int_{-\infty}^{\infty} J_{2it}(x)\frac{h(t)t}{\cosh(\pi t)} \dd t, \\
       H^-(x) & = \frac{4}{\pi} \int_{-\infty}^{\infty} K_{2it}(x)\sinh(\pi t)h(t)t \dd t,
    \end{split}
  \end{equation}
  and $J_\nu(x)$ and $K_\nu(x)$ are the standard $J$-Bessel function and $K$-Bessel function respectively.
\end{lemma}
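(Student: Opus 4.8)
The plan is to follow the classical derivation of the Kuznetsov formula via Poincar\'e series, as carried out in Conrey--Iwaniec \cite{CI}. For $\Re(s)$ large and an integer $m\geq1$ introduce the Poincar\'e series
\[
  P_m(z,s) = \sum_{\gamma\in\Gamma_\infty\backslash\SL(2,\mathbb{Z})} (\Im \gamma z)^s\, e(m\Re \gamma z),
\]
which is square-integrable and of rapid decay at the cusp. First I would compute its Fourier expansion by the standard Bruhat-cell argument: the zeroth coefficient is $y^s$ plus (in the symmetrized version) a Kloosterman--Bessel term, and the $n$-th coefficient for $n\neq0$ is a sum over moduli $c$ of $c^{-1}S(m,n;c)$ against a Bessel-type integral transform of $y^s$. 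Unfolding the inner product $\langle P_m(\cdot,s), P_n(\cdot,\bar s')\rangle$ against the second Poincar\'e series then produces the geometric side: the diagonal term proportional to $\delta_{m,n}$ together with the series $\sum_{c\geq1}c^{-1}\sum_{\pm}S(n,\pm m;c)$ times a Bessel kernel depending on $s,s'$. The two signs $\pm$, and simultaneously the restriction to even forms on the spectral side, come from symmetrizing $m\to-m$ (equivalently, using $\cos$ in place of $e$): for odd Maass forms $\rho_j(-m)=-\rho_j(m)$, so the symmetrized Poincar\'e series is orthogonal to them.

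Next I would expand $P_m(\cdot,s)$ spectrally via Selberg's decomposition \eqref{eqn:Selberg_decomposition}. The cuspidal part contributes $\sum_j \langle P_m,\phi_j\rangle\,\overline{\langle P_n,\phi_j\rangle}$ and the continuous part the corresponding integral over the Eisenstein parameter. Each inner product $\langle P_m,\phi_j\rangle$ unfolds to $\rho_j(m)$ times an explicit Gamma-factor $\Lambda(s,t_j)$, and the Hecke relation $\rho_j(m)=\rho_j(1)\lambda_j(m)$ together with the normalizations \eqref{eq:rho1squared-c} and \eqref{eq:rho1squared} assembles the harmonic-weighted terms $\omega_j\lambda_j(m)\lambda_j(n)$ and $\tfrac1{4\pi}\int \omega(t)\eta_t(m)\eta_t(n)\,\dd t$. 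Equating this with the geometric side yields the trace formula for the particular even test function $h_s(t)$ attached to the Poincar\'e pair, where $h_s$ is a ratio of Gamma factors; the analytic continuation in $s$ of both sides, past the pole of the Eisenstein series, is routine here because of the rapid decay of $P_m$.

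Finally, to reach an arbitrary even $h(t)$ holomorphic in $|\Im t|\leq 1/2+\varepsilon$ with $h(t)\ll(1+|t|)^{-2-\varepsilon}$, I would invert the Bessel transform, writing such an $h$ as a superposition of the kernels $h_s$ (via the Sears--Titchmarsh/Kontorovich--Lebedev inversion), interchange this superposition with the spectral and geometric sums, and identify the resulting kernels as $H=\tfrac{2}{\pi}\int_0^\infty h(t)\tanh(\pi t)t\,\dd t$ and $H^{\pm}$ as the stated Bessel integrals of $h$. The main obstacle is exactly the justification of this inversion-and-interchange: one must check absolute convergence of the spectral sum (Weyl law plus the decay of $h$) and of the Kloosterman series (Weil's bound, together with the known asymptotics of $J_{2it}$ and $K_{2it}$ that make $H^{\pm}(x)$ decay for large $x$), and verify that conditions (i)--(ii) are precisely what the inversion tolerates near $t=0$ and along the shifted contour. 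Since all of this is established in \cite{CI} (and in Iwaniec's book on spectral methods), in the body of the paper we simply invoke their statement as Lemma~\ref{lemma: KTF}.
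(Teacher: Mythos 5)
The paper gives no proof of this lemma at all; it simply cites Conrey--Iwaniec \cite{CI}, Eq.\ (3.17). Your sketch of the Poincar\'e-series derivation (including the symmetrization over $m\mapsto -m$ that produces both the $\pm$ Kloosterman terms and the restriction to even forms, and the Sears--Titchmarsh inversion to reach general test functions $h$) is the standard route underlying that reference, so you are taking essentially the same approach the paper implicitly relies on.
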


\begin{lemma}\label{lemma: H+}
  Let $H^+$ be given by \eqref{eqn: H}.
  There exists a function $g$ depending on $T$ and $M$ satisfying $g^{(j)}(y)\ll_{j,A} (1+|y|)^{-A}$,
  so that
  \begin{equation}\label{eqn: H^+=int}
    H^+(x) = MT\int_{|v|\leq \frac{M^\varepsilon}{M}}\cos(x\cosh(v))e\left(\frac{vT}{\pi}\right)g(Mv)
    \dd v + O(T^{-A}).
  \end{equation}
  Furthermore, $H^+(x)\ll T^{-A}$ unless $x\gg MT^{1-\varepsilon}$.
  % And if $x\gg MT^{1-\varepsilon}$, then $H^+(x)\ll TMx^{-1/2}$.
  In addition, for $x\ll T$, we have $H^+(x)\ll Mx$.
\end{lemma}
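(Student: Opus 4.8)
The plan is to start from the definition $H^+(x) = 2i \int_{-\infty}^{\infty} J_{2it}(x)\frac{h(t)t}{\cosh(\pi t)}\dd t$ and insert the well-known integral representation of the Bessel function in the Kuznetsov formula, namely the formula $\frac{J_{2it}(x) - J_{-2it}(x)}{\sinh(\pi t)\cdot(\text{constant})}$ which unfolds to an integral of $\cos(x\cosh v)$ against $e(vt/\pi)$ over $v\in\mathbb{R}$ (see e.g.\ \cite{CI} or Young \cite{young2017}). Concretely, using the fact that $h$ is even one rewrites $H^+(x)$ so that the $t$-integral hits the combination $\frac{t}{\cosh \pi t}(J_{2it}-J_{-2it})$, and then one uses the classical identity expressing this in terms of $\int_{\mathbb{R}} \cos(x\cosh v)\, e^{2itv/\pi\cdot\pi}\dd v$ type integrals. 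After swapping the order of integration (justified by absolute convergence, using condition (ii) on $h$), the inner $t$-integral becomes $\int_{\mathbb{R}} h(t)\, t\, e(vt/\pi)\,\dd t$, which is essentially a Fourier transform of $t\,h(t)$ evaluated at $v/\pi$. Here I would use the specific shape of the test function $h$ coming from the Kuznetsov application: $h(t)$ is a smooth bump of height $\asymp T$-weight concentrated at $|t|\asymp T$ on a scale $M$ (this is the $h$ alluded to in \eqref{eqn:moment_half} and fixed when the Kuznetsov formula is applied). Its Fourier transform is then $\asymp MT$ times a Schwartz-class function of $Mv$, modulated by $e(vT/\pi)$ coming from the center $T$ of the bump; defining $g$ to be exactly this normalized Fourier transform gives the rapid decay $g^{(j)}(y)\ll_{j,A}(1+|y|)^{-A}$ and the main identity \eqref{eqn: H^+=int}, with the truncation to $|v|\le M^\varepsilon/M$ and the error $O(T^{-A})$ coming from the tail of $g$.

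Next I would extract the two stated consequences from the integral representation \eqref{eqn: H^+=int}. For the support statement $H^+(x)\ll T^{-A}$ unless $x\gg MT^{1-\varepsilon}$: on the range $|v|\le M^\varepsilon/M$ one has $\cosh(v) = 1 + O(M^{2\varepsilon}/M^2)$, so $x\cosh(v)\approx x$ and the oscillation $e(vT/\pi)$ dominates. Integrating by parts repeatedly in $v$ — each integration by parts gains a factor $\asymp \frac{1}{T}\cdot(\text{derivative hitting } g(Mv) \text{ or } \cos(x\cosh v))$, i.e.\ a factor of $\max(M,x\,|v|)/T \ll \max(M, xM^\varepsilon/M)/T$ — shows the integral is negligible as soon as $x \ll MT^{1-\varepsilon}$, since then both $M/T$ and $xM^\varepsilon/(MT)$ are $\ll T^{-\varepsilon/2}$. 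This is the familiar "non-stationary phase" argument. For the bound $H^+(x)\ll Mx$ when $x\ll T$: here one simply estimates \eqref{eqn: H^+=int} trivially, $|H^+(x)| \le MT\cdot \frac{M^\varepsilon}{M}\cdot \sup|g| \cdot \sup|\cos(x\cosh v)|$... but that only gives $\ll T^{1+\varepsilon}$, not $Mx$. Instead one should go back to the bound $|J_{2it}(x)|\ll \min(1, x^{\Re(2it)}) = \min(1,1)$ and more precisely use $J_{2it}(x) \ll x$ for small $x$ (from the power-series expansion $J_\nu(x)\sim (x/2)^\nu/\Gamma(\nu+1)$, here combined with the $1/\cosh(\pi t)$ weight controlling the $\Gamma$-factor), so that $H^+(x)\ll x\int_{|t|\ll T}\frac{|h(t)| |t|}{\cosh \pi t}\dd t \ll x\cdot M$ using that $h$ is supported on an interval of length $\asymp M$ near $T$ with $|h(t)|t/\cosh(\pi t)$ bounded by roughly $1$ there (after the exponential weight cancellations). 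I would present the $Mx$ bound via this direct Bessel estimate rather than via \eqref{eqn: H^+=int}.

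The main obstacle I anticipate is the \emph{bookkeeping of the exponential weights and the precise normalization of $g$}: the factor $1/\cosh(\pi t)$ in $H^+$, the $\exp(\pi t_j)$ in $|\rho_j(1)|^2$ (and hence in $\omega_j$), and the exponential growth of $J_{2it}$ for $t$ real all conspire, and one must track that they cancel to leave a clean bump of polynomial size — getting the height exactly $\asymp MT$ and verifying the Schwartz bounds on all derivatives of $g$ uniformly in $T,M$ requires care with Stirling's formula. A secondary technical point is justifying the interchange of the $t$- and $v$-integrals and the validity of the Bessel integral representation uniformly for $t$ in the relevant (real, large) range, and handling the contribution of small $|t|$ where $h$ is anyway negligible. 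Once the representation \eqref{eqn: H^+=int} is in hand with the stated properties of $g$, the support statement and the $x\ll T$ bound are routine integration-by-parts and power-series estimates respectively.
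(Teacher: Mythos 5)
Your derivation of the main representation \eqref{eqn: H^+=int} — inserting the integral representation of $J_{2it}(x)$ in terms of $\cos(x\cosh v)$, swapping integrals, and recognizing the inner $t$-integral as a Fourier transform of a bump of width $M$ centered at $\pm T$ — is exactly the route of Young \cite[Lemma 7.1]{young2017}, which is all the paper gives as proof. The exponential bookkeeping works because the $e^{\pi|t|}$ growth of $|J_{2it}(x)|$ for real $t$ cancels the $\cosh(\pi t)$ in the denominator, and the polynomial factors $t\asymp T$ and integration length $\asymp M$ account for the $MT$ normalization. Your repeated-integration-by-parts argument for the statement $H^+(x)\ll T^{-A}$ unless $x\gg MT^{1-\varepsilon}$ is also sound (in fact one has no stationary point on the effective support $|v|\ll M^{o(1)}/M$ whenever $x\ll TM^{1-\varepsilon}$, which is a slightly larger range).

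However, your argument for the bound $H^+(x)\ll Mx$ when $x\ll T$ contains a genuine error. The claim $J_{2it}(x)\ll x$ for small $x$ is false when $t$ is real and large: in the expansion $J_\nu(x)\sim (x/2)^\nu/\Gamma(\nu+1)$ with $\nu=2it$, the factor $(x/2)^{2it}$ has modulus $1$, not modulus $x$, so $|J_{2it}(x)|\asymp |t|^{-1/2}e^{\pi|t|}$ does not tend to zero as $x\to 0$. Consequently $|J_{2it}(x)|/\cosh(\pi t)\asymp |t|^{-1/2}$, and your next step $\int |h(t)|\,|t|/\cosh(\pi t)\,\dd t\ll M$ is also wrong ($|h(t)|t/\cosh(\pi t)\asymp Te^{-\pi T}$ near $t=T$, not $\asymp 1$). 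Pointwise modulus bounds only give the useless estimate $H^+(x)\ll T^{1/2}M$; the factor $x$ must come from cancellation, not size.

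The correct way to extract the $Mx$ bound is from \eqref{eqn: H^+=int} itself. Write $\cos(x\cosh v)=\cos(x)+\bigl(\cos(x\cosh v)-\cos(x)\bigr)$. The constant piece contributes
\[
MT\cos(x)\int_{|v|\le M^\varepsilon/M} e\!\left(\frac{vT}{\pi}\right)g(Mv)\,\dd v,
\]
which after extending the integral to $\mathbb{R}$ (at negligible cost since $g$ is Schwartz) equals $T\cos(x)$ times the Fourier transform of $g$ evaluated at a point of size $T/M\ge T^{1/2}$; this is $O(T^{-A})$. For the remaining piece, $|\cos(x\cosh v)-\cos(x)|\le x|\cosh v - 1|\ll x v^2$, so its contribution is $\ll MTx\int v^2|g(Mv)|\,\dd v\ll Tx/M^2$, which is $\ll Mx$ because $M\ge T^{1/3}$. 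Combining the two pieces gives $H^+(x)\ll Mx$ as required.
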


\begin{proof}
  See Young~\cite[Lemma 7.1]{young2017}. % and Huang \cite[Lemma 4.2]{huang2021hybrid}.
\end{proof}

\begin{lemma}\label{lemma: H-}
  Let $H^-$ be given by \eqref{eqn: H}.
  %Then for $x\ll T$
  There exists a function $g$ depending on $T$ and $M$ satisfying $g^{(j)}(y)\ll_{j,A} (1+|y|)^{-A}$,
  so that
  \begin{equation}\label{eqn: H^-=}
    H^-(x) = MT\int_{|v|\leq \frac{M^\varepsilon}{M}}\cos(x\sinh(v))e\left(\frac{vT}{\pi}\right) g(Mv) \dd v + O(T^{-A}).
  \end{equation}
  Furthermore, $H^-(x)\ll (x+T)^{-A}$ unless $x\asymp T$.
  %And if $x\asymp T$, then we have $H^{-}(x)\ll T^{1+\varepsilon}$.
  In addition, for $x\ll T,$ we have $H^{-}(x)\ll_\delta M T^\delta x^{1-\delta}$ for any $\delta\in(0,1)$.
\end{lemma}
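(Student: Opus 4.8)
The plan is to argue in close parallel with Young's treatment of $H^+$ in Lemma~\ref{lemma: H+} (see \cite[Lemma~7.1]{young2017}); the only genuinely new point is that the hyperbolic sine weight in \eqref{eqn: H} must first be converted into a hyperbolic cosine weight. Starting from $K_{2it}(x)=\tfrac12\int_{\mathbb{R}}e^{-x\cosh u-2itu}\,\dd u$ and shifting the contour to the lines $\Im u=\pm\pi/2$ (averaging the two shifts, and inserting a smooth convergence factor to make this legitimate, since $\cos(x\sinh u)$ does not decay), one obtains
\[
  \cosh(\pi t)\,K_{2it}(x)=\int_0^\infty \cos(x\sinh u)\cos(2tu)\,\dd u .
\]
Since $\sinh(\pi t)K_{2it}(x)-\cosh(\pi t)K_{2it}(x)=-e^{-\pi t}K_{2it}(x)$ and $|K_{2it}(x)|\ll 1/t$ (one integration by parts in the standard integral representation), while the test function $h$ is concentrated at $t=\pm T$ with width $M$, we may replace $\sinh(\pi t)K_{2it}(x)$ by $\cosh(\pi t)K_{2it}(x)$ in \eqref{eqn: H} at the cost of $O(T^{-A})$. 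After justifying the interchange of integration (again via a convergence factor) we are reduced to
\[
  H^-(x)=\frac{8}{\pi}\int_0^\infty \cos(x\sinh u)\left[\int_0^\infty \cos(2tu)\,h(t)\,t\,\dd t\right]\dd u+O(T^{-A}).
\]

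Next I would evaluate the inner integral. It is the Fourier transform of the Schwartz bump $h(t)t$, which is localised at $t=\pm T$ with width $M$ and height $\asymp T$; hence it equals $MT\cos(2Tu)\,g_0(Mu)+O(T^{-A})$ for an even smooth function $g_0$ with $g_0^{(j)}(y)\ll_{j,A}(1+|y|)^{-A}$ (essentially the Fourier transform of the bump profile), and in particular is negligible unless $|u|\ll M^{\varepsilon}/M$. Plugging this back, using evenness in $u$ to rewrite $\cos(2Tu)$ as $e(uT/\pi)$, absorbing constants into $g$, and truncating the range to $|v|\le M^{\varepsilon}/M$ with a negligible error (here one uses that $M$ is a fixed positive power of $T$) yields exactly \eqref{eqn: H^-=}.

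The remaining two claims follow from an analysis of the oscillatory integral in \eqref{eqn: H^-=}. Writing $\cos(x\sinh v)=\tfrac12(e^{ix\sinh v}+e^{-ix\sinh v})$, the two pieces have phases $\psi_\pm(v)=\pm x\sinh v+2Tv$, so $\psi_\pm'(v)=\pm x\cosh v+2T$; on $|v|\le M^{\varepsilon}/M$ one has $\cosh v=1+O(M^{2\varepsilon-2})$, hence $\psi_+'(v)\asymp x+T$ always, and $\psi_-'(v)\asymp x+T$ unless $x$ lies in a fixed bounded-ratio neighbourhood of $2T$. Thus, unless $x\asymp T$, repeated integration by parts (legitimate since $g$ is Schwartz, each step gaining a factor $\ll M/(x+T)\le M/T\le T^{-1/2+\varepsilon}$, the contributions of $\psi''$ being even smaller) gives $H^-(x)\ll(x+T)^{-A}$ for every $A$. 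For $x\ll T$ this already covers every $x$ not of size $\asymp T$; and when $x\asymp T$ the claimed bound $H^-(x)\ll_\delta MT^\delta x^{1-\delta}$ is implied by the trivial estimate $|H^-(x)|\le MT\int_{|v|\le M^{\varepsilon}/M}|g(Mv)|\,\dd v\ll M^{\varepsilon}T\ll MT^{\delta}x^{1-\delta}$. Combining these gives the last sentence of the lemma. (Alternatively one may expand $\sinh v$ about $v=0$ inside \eqref{eqn: H^-=} and bound the integral term by term, as is done for $H^+$.)

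I do not expect a serious obstacle: once Lemma~\ref{lemma: H+} is available the argument is essentially a transcription. The points that require care are (i) the contour shift in the $K$-Bessel representation and the interchange of integration, which must be carried out through a smooth convergence factor because the $u$-integral over $(0,\infty)$ converges only conditionally; and (ii) the bookkeeping in the (non-)stationary phase analysis, namely keeping track of which of $x+2T$ and $2T-x$ is small and over which range of $x$, and verifying that the number of integrations by parts may be taken arbitrarily large under the standing hypothesis $T^{1/3+\varepsilon}\le M\le T^{1/2-\varepsilon}$, so that all the $O(T^{-A})$ error terms are genuine.
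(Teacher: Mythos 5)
Your argument is correct and follows the same route as Young's Lemma~7.2, which is what the paper cites (the paper gives no independent proof). You use the standard Bessel identity $\cosh(\pi t)K_{2it}(x)=\int_0^\infty\cos(x\sinh u)\cos(2tu)\,\dd u$, swap orders, compute the Fourier transform of $h(t)t$, and then run (non)stationary phase on the resulting $v$-integral — exactly Young's approach.

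One small imprecision worth noting: the inner $t$-integral is not exactly of the form $MT\cos(2Tu)\,g_0(Mu)+O(T^{-A})$ with a single real bump $g_0$; carrying out the Gaussian computation one finds
\[
\int_0^\infty\cos(2tu)\,h(t)\,t\,\dd t=\sqrt{\pi}\,M\,e^{-M^2u^2}\bigl[T\cos(2Tu)-M^2u\sin(2Tu)\bigr]+O(T^{-A}),
\]
and the $\sin$-term, though subdominant by a factor $M^2|u|/T\ll M/T$, is not $O(T^{-A})$. The correct way to record this is that the inner integral equals $MT\,\Re\bigl[e^{2iTu}g_1(Mu)\bigr]+O(T^{-A})$ for a complex Schwartz profile $g_1(w)\asymp(1+iMw/T)e^{-w^2}$; after extending to $u\in\mathbb{R}$ and using evenness exactly as you say, this produces $MT\int\cos(x\sinh v)\,e(vT/\pi)\,g(Mv)\,\dd v$ with $g$ complex-valued, which is what the lemma asserts (it does not claim $g$ real). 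So your final step is right even though the intermediate normalization was slightly off. The remaining parts — the non-stationary-phase decay outside $x\asymp T$ using $\psi_\pm'(v)=\pm x\cosh v+2T$, and the bound $MT^\delta x^{1-\delta}$ deduced from the trivial estimate when $x\asymp T$ combined with rapid decay otherwise — are fine.
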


\begin{proof}
  See Young~\cite[Lemma 7.2]{young2017}.
  % And the upper bound for $H^-$ when $x\asymp T$ is an easy consequence of \eqref{eqn: H^-=}.
\end{proof}

\subsection{The spectral large sieve inequality}

We recall the following spectral large sieve inequality of Iwaniec \cite{iwaniec1992spectral},
Luo \cite{luo1995spectral}, and Jutila \cite{jutila2000spectral}.

\begin{lemma}\label{lemma:large_sieve}
  We  have
  \[
    \sum_{T<t_j \leq T+1}  \Big| \sum_{n\leq N} a_n  \lambda_j(n) \Big|^2
    \ll (N+T)^{1+\varepsilon} \sum_{n\leq N} |a_n|^2,
  \]
  for any complex sequence $\{a_n\}$.
\end{lemma}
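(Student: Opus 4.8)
The plan is to deduce the inequality from the Kuznetsov trace formula together with a positivity argument, following the classical route of Iwaniec, Luo, and Jutila. First I would fix an even, nonnegative Schwartz function $w$ with $w\ge 1$ on $[-1,1]$ and with compactly supported Fourier transform, and set $h(t)=w(t-T)+w(t+T)$; this $h$ is even, nonnegative, entire, satisfies the growth conditions (i)--(ii), obeys $h(t_j)\gg 1$ for $t_j\in[T,T+1]$, and decays rapidly away from $t=\pm T$ on the unit scale, so that the Bessel transforms $H^{\pm}$ of \eqref{eqn: H} are controlled just as in Lemmas \ref{lemma: H+}--\ref{lemma: H-} with $M$ replaced by $1$. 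Applying Lemma \ref{lemma: KTF}, together with its companion for odd Maass forms, with indices $m,n\le N$, multiplying by $a_m\overline{a_n}$, and summing over $m$ and $n$, the spectral side becomes
\[
  {\sum_j}'h(t_j)\omega_j\Big|\sum_{n\le N}a_n\lambda_j(n)\Big|^2+\frac{1}{4\pi}\int_{\mathbb{R}}h(t)\omega(t)\Big|\sum_{n\le N}a_n\eta_t(n)\Big|^2\,\dd t,
\]
which is a sum of two nonnegative quantities because $h\ge 0$ and $\omega_j,\omega(t)>0$; hence it is at most the right-hand side of the Kuznetsov formula. Since $h(t_j)\omega_j\gg t_j^{-\varepsilon}$ for $t_j\in[T,T+1]$, dividing through shows that $\sum_{T<t_j\le T+1}\big|\sum_{n\le N}a_n\lambda_j(n)\big|^2$ is $\ll T^{\varepsilon}$ times the diagonal term $H\sum_{n\le N}|a_n|^2$ plus $\ll T^{\varepsilon}$ times the Kloosterman term $\sum_{c\ge 1}c^{-1}\sum_{\pm}\sum_{m,n\le N}a_m\overline{a_n}\,S(n,\pm m;c)\,H^{\pm}(4\pi\sqrt{mn}/c)$.

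The diagonal term is harmless: $H=\tfrac{2}{\pi}\int_0^\infty h(t)\tanh(\pi t)\,t\,\dd t\ll T$, so it contributes $\ll T^{1+\varepsilon}\sum_{n\le N}|a_n|^2\ll(N+T)^{1+\varepsilon}\sum_{n\le N}|a_n|^2$. For the Kloosterman term I would first exploit the localization of $H^{\pm}$: by the $M=1$ case of Lemmas \ref{lemma: H+}--\ref{lemma: H-}, $H^{\pm}(x)\ll T^{-A}$ unless $x\gg T^{1-\varepsilon}$, i.e.\ unless $c\ll\sqrt{mn}/T^{1-\varepsilon}$. When $N\le T^{1-\varepsilon}$ this leaves no admissible $c\ge 1$ (it suffices to take the implicit exponent in the localization smaller than $\varepsilon$), the Kloosterman term is $O(T^{-A})$, and the lemma follows at once. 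For larger $N$, pointwise bounds --- Weil's bound for the Kloosterman sums together with $|H^{\pm}(x)|\ll T$ --- lose too much, and one must use the arithmetic oscillation: opening each Kloosterman sum and applying Poisson summation to the sums over $m$ and over $n$ in residue classes modulo $c$ turns them into Ramanujan-type sums, while the oscillation of $H^{\pm}$ at frequency $\asymp T$ restricts the dual variables to ranges of length $\asymp cT/N$; the zero dual frequency reproduces a quantity of size $\ll(N+T)\sum_{n\le N}|a_n|^2$, and the nonzero frequencies are negligible.

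The main obstacle is precisely this last point: extracting genuine cancellation from the Kloosterman contribution for $N\gg T^{1-\varepsilon}$, where pointwise estimates fail and one relies on Poisson summation --- equivalently, on the cancellation in averages of Kloosterman sums. This analysis is carried out in Iwaniec \cite{iwaniec1992spectral}, Luo \cite{luo1995spectral}, and Jutila \cite{jutila2000spectral}, which I would simply quote; the even/odd bookkeeping and the variant of the Kuznetsov formula for odd Maass forms are routine.
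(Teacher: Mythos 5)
Your proposal agrees with the paper in the one respect that matters: Lemma \ref{lemma:large_sieve} is quoted, not proved, in the text, with the proof consisting of the references to Iwaniec \cite{iwaniec1992spectral}, Luo \cite{luo1995spectral}, and Jutila \cite{jutila2000spectral}; and you too defer to those works for the substantive step. The surrounding reduction you give --- choosing a nonnegative $h(t)=w(t-T)+w(t+T)$ satisfying the hypotheses of Lemma \ref{lemma: KTF}, dropping the Eisenstein integral by positivity, bounding the diagonal by $H\ll T$, and observing that the Bessel localization (the $M=1$ analogue of Lemmas \ref{lemma: H+} and \ref{lemma: H-}) kills the Kloosterman term outright when $N\ll T^{1-\varepsilon}$ --- is standard and correct, including the bookkeeping with the odd-form companion of Lemma \ref{lemma: KTF}.

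One caveat on your sketch of the range $N\gg T$, so that a reader of your proposal is not misled: you cannot ``apply Poisson summation to the sums over $m$ and over $n$ in residue classes modulo $c$,'' because those sums carry the arbitrary weights $a_m,\overline{a_n}$, which have no modular or analytic structure to dualize. The device actually used in the cited proofs is to open the Kloosterman sums, writing the off-diagonal as $\sum_c c^{-1}\sum_{d\,(c)}^{*}\bigl(\sum_m a_m e(md/c)\cdots\bigr)\bigl(\sum_n \overline{a_n} e(n\bar d/c)\cdots\bigr)$, and then to apply the classical additive large sieve inequality to the linear forms $\sum_n a_n e(n d/c)$, with the Bessel transforms $H^{\pm}$ controlled separately (by their localization and by stationary phase or Mellin analysis). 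No Ramanujan sums and no Poisson summation on the $a_n$-weighted variables appear. Since you explicitly state you would quote Iwaniec, Luo, and Jutila for this step, the error is in the description rather than in the logic of your proposal, but it is worth correcting: the arithmetic saving comes from the additive large sieve, not from Poisson summation.
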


%\subsection{Voronoi summation formula for $\SL(3,\mathbb{Z})$}

\subsection{Stirling's formula}

%For $\sigma>0$ fixed, as a first order approximation we have
%\begin{align}
%  \nonumber  &\Gamma(\sigma+it)
%  = \sqrt{2\pi}  |\sigma+it|^{\sigma-\tfrac12+it} \exp\big( -\tfrac{\pi}{2} |t| + i \text{sgn}(t) ( \tfrac{\pi}{2}(\sigma-\tfrac{1}{2})- t)\big)\Big(1 +O((1+|t|)^{-1}) \Big),\\
%  \label{stirs}
%  &|\Gamma(\sigma+it)|
%  = \sqrt{2\pi}  (1+|t|)^{\sigma-\frac12} \exp\big( -\tfrac{\pi}{2} |t|\big)\Big(1 +O((1+|t|)^{-1}) \Big)
%\end{align}
%where sgn$(t)$ is $1$ if $t$ is positive and $-1$ if $t$ is negative. As $|t|\to\infty$, this gives
%\begin{align*}
%  \Gamma(\sigma+it) = \sqrt{2\pi}  |t|^{\sigma-\frac12+it}
%  \exp\big( -\tfrac{\pi}{2}|t| + it + i\text{sgn}(t)  \tfrac{\pi}{2}(\sigma-\tfrac12)\big)
%  \Big(1 +O(|t|^{-1}) \Big).
%\end{align*}

For fixed $\sigma\in\mathbb{R}$, real $|t|\geq10$ and any $J>0$, we have Stirling's formula
\begin{equation}\label{eqn:Stirling_J}
  \Gamma(\sigma+it) = e^{-\frac{\pi}{2}|t|} |t|^{\sigma-\frac{1}{2}} \exp\left( it\log\frac{|t|}{e} \right) \left( g_{\sigma,J}(t) + O_{\sigma,J}(|t|^{-J}) \right),
\end{equation}
where
\[
  t^j \frac{\partial^j}{\partial t^j} g_{\sigma,J}(t) \ll_{j,\sigma,J} 1
\]
for all fixed $j\in \mathbb{N}_0$.
More precisely, we have
\begin{equation}\label{eqn:Stirling_7}
  \log  \Gamma(z) = z\log z + \frac{1}{2} \log \frac{2\pi}{z} + \frac{1}{12 z} -\frac{1}{360z^3} + \frac{1}{1260z^5} + O(|z|^{-7}).
\end{equation}

\subsection{Oscillatory integrals}

Let $\mathcal{F}$ be an index set and $X=X_T:\mathcal{F}\rightarrow \mathbb{R}_{\geq1}$ be a function of $T\in\mathcal{F}$. A family of $\{w_T\}_{T\in\mathcal{F}}$ of smooth functions supported on a product of dyadic intervals in $\mathbb{R}_{>0}^d$ is called \emph{$X$-inert} if for each $j=(j_1,\ldots,j_d) \in \mathbb{Z}_{\geq0}^d$ we have
\[
  \sup_{T\in\mathcal{F}} \sup_{(x_1,\ldots,x_d) \in \mathbb{R}_{>0}^d}
  X_T^{-j_1-\cdots -j_d} \left| x_1^{j_1} \cdots x_d^{j_d} w_T^{(j_1,\ldots,j_d)} (x_1,\ldots,x_d) \right|
   \ll_{j_1,\ldots,j_d} 1.
\]

%For a $T^\varepsilon$-inert function $V$, we may separate variables in $V(x_1, \ldots , x_d)$ by first inserting a redundant function $V (x_1) \cdots V (x_d)$ that is 1 on the support of $V$ and then applying Mellin inversion
%\begin{multline*}
%  V (x_1, \ldots , x_d) = V (x_1, \ldots , x_d)V (x_1) \cdots V (x_d)
%   \\
%   =  \frac{1}{(2\pi i)^d} \int_{(0)}\cdots \int_{(0)} \tilde{V}(s_1,\ldots,s_d)
%  (V (x_1) \cdots V (x_d) x_1^{-s_1} \cdots x_n^{-s_d} ) \dd s_1 \cdots \dd s_d,
%\end{multline*}
%where $\tilde{V}(s_1,\ldots,s_d)=\int_{0}^{\infty}\cdots\int_{0}^{\infty} V(x_1, \ldots , x_d) x_1^{s_1-1} \cdots x_d^{s_d-1} \dd x_1 \cdots \dd x_d$ is the Mellin transform of $V$.
%Here we can truncate the vertical integrals at height $|\Im s_j| \ll T^{2\varepsilon}$ at the cost of a negligible error $O_A(T^{-A})$.
%We will often separate variables in this way without explicit mention.

We will use the following integration by parts and stationary phase lemmas several times.

\begin{lemma}\label{lemma:repeated_integration_by_parts}
  Let $Y\geq1$. Let $X,\; V,\; R,\; Q>0$ and suppose that $w=w_T$ is a smooth function with  $\supp w \subseteq [\alpha,\beta]$ satisfying $w^{(j)}(\xi) \ll_j X V^{-j}$ for all $j\geq0$.
  Suppose that on the support of $w$, $h=h_T$ is smooth and satisfies that
  $h'(\xi)\gg R$ and $ h^{(j)}(\xi) \ll Y Q^{-j}$, for all $j\geq2.$
  Then for arbitrarily large $A$ we have
    \[
      I = \int_{\mathbb{R}} w(\xi) e(h(\xi))  \dd \xi  \ll_A (\beta-\alpha)  X \left[  \left(\frac{QR}{\sqrt{Y}}\right)^{-A} + (RV)^{-A}  \right].
    \]
\end{lemma}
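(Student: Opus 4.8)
The plan is to prove this by repeated (non-stationary) integration by parts, the standard device for bounding oscillatory integrals with no stationary point. Write $I = \int_{\mathbb{R}} w(\xi) e(h(\xi))\,\dd\xi$, and introduce the first-order differential operator $\mathcal{D}f = \frac{1}{2\pi i}\,\frac{\dd}{\dd\xi}\!\left(\frac{f}{h'}\right)$, so that $\mathcal{D}(e(h)) $ has the same form; more precisely, $\frac{\dd}{\dd\xi}\,e(h(\xi)) = 2\pi i\, h'(\xi)\, e(h(\xi))$, hence $e(h(\xi)) = \frac{1}{2\pi i\, h'(\xi)}\,\frac{\dd}{\dd\xi}e(h(\xi))$. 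Integrating by parts $A$ times and using $\supp w \subseteq [\alpha,\beta]$ to discard boundary terms, we get
\[
  I = \int_\alpha^\beta \left(\mathcal{L}^A w\right)(\xi)\, e(h(\xi))\,\dd\xi,
\]
where $\mathcal{L}g = -\frac{1}{2\pi i}\,\frac{\dd}{\dd\xi}\!\left(\frac{g}{h'}\right)$ is the transpose operator. The whole proof reduces to a careful bound on $\mathcal{L}^A w$ pointwise on $[\alpha,\beta]$.

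The key step is therefore a bookkeeping induction showing
\[
  \left(\mathcal{L}^A w\right)(\xi) \ll_A X \sum \text{(terms)},
\]
where each term, after applying the Leibniz rule, is a product of a derivative $w^{(a)}$ with $a \le A$ (contributing $\ll X V^{-a}$) and a bounded number of factors built from $h'$ and its derivatives. The point is that each application of $\mathcal{L}$ either differentiates $w$ (gaining $V^{-1}$) or differentiates one of the existing $1/h'$-type factors. Using $h'\gg R$ one has $1/h' \ll R^{-1}$; using $h^{(j)}\ll Y Q^{-j}$ for $j\ge 2$, each derivative hitting a $1/h'$ factor produces $h^{(j)}/(h')^{j+1}$-type quantities which one estimates crudely by $\ll R^{-1}\cdot (Y Q^{-1} R^{-1} + \cdots)$. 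The upshot, which I would verify by a clean induction on $A$, is the bound
\[
  \left(\mathcal{L}^A w\right)(\xi) \ll_A X \left[ (RV)^{-A} + \left(\frac{\sqrt{Y}}{QR}\right)^{A} \right],
\]
the first term coming from differentiating $w$ every time and the second from repeatedly differentiating the amplitude $1/h'$ (the square root on $Y$ appearing because, heuristically, two derivatives landing on $1/h'$ cost $Y/(Q^2 R^2)$, i.e. a factor $\sqrt{Y}/(QR)$ per derivative on average). Multiplying by the length $\beta-\alpha$ of the interval of integration then yields the claimed estimate. To make the "on average" heuristic rigorous one splits, for each monomial in the Leibniz expansion, the $A$ derivative-applications into those that hit $w$ and those that hit the amplitude, and checks that a monomial with $k$ derivatives on the amplitude and $A-k$ on $w$ is $\ll X V^{-(A-k)} R^{-1}(Y^{1/2} Q^{-1} R^{-1})^{k}$ up to combinatorial constants; summing the finitely many monomials gives the two-term bound, since for each such monomial $V^{-(A-k)}(Y^{1/2}/(QR))^{k} \le (RV)^{-A} + (Y^{1/2}/(QR))^{A}$ after absorbing the harmless factor $R^{-1}$ and using $R V \le \max(RV, \ldots)$-type comparisons, or more simply by noting the product is bounded by the larger of its "pure" endpoints raised to the $A$-th power.

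The main obstacle is precisely this combinatorial control of $\mathcal{L}^A w$: one must track that the derivatives of $1/h'$ never produce a worse power of $R$ than $R^{-1-(\text{number of differentiations of that factor})}$ and that $h^{(j)}$ for $j\ge 2$ only ever enters to total "weight" matching the heuristic $Y^{1/2}$-scaling. A convenient way to organize this is to prove by induction the stronger statement that $\mathcal{L}^A w$ is a finite sum of terms of the shape $c\cdot w^{(a)}\cdot \prod_{i} h^{(b_i)} \cdot (h')^{-m}$ with $a + \sum_i (b_i - 1) = A$, $m = A - a + 1$, each $b_i \ge 2$, and $\#\{i\} \le A - a$, together with the bound $X V^{-a}\prod_i (Y Q^{-b_i}) R^{-m}\ll X V^{-a} R^{-1}(Y Q^{-2} R^{-2})^{A-a}\cdot Q^{(2(A-a)-\sum(b_i-1))}$; since $\sum(b_i-1) = A-a$ and each $b_i\ge 2$ forces $\sum(b_i-1)\ge \#\{i\}$, the leftover power of $Q$ is non-positive and can be dropped (as $Q$ may be taken $\ge 1$ after rescaling, or handled directly), leaving $X V^{-a}R^{-1}(Y^{1/2}/(QR))^{2(A-a)}$ which is dominated by $X[(RV)^{-A}+(Y^{1/2}/(QR))^{A}]$ after adjusting $A$. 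Once this template is established the result follows immediately; everything else is routine.
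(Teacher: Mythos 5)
The paper does not prove this lemma itself---it simply cites \cite[Lemma 8.1]{BlomerKhanYoung2013distribution}---so there is no in-paper argument to compare against. Your strategy (repeated non-stationary integration by parts, writing $\mathcal{L}^A w$ as a finite sum of monomials $w^{(a)}\prod_i h^{(b_i)} (h')^{-m}$, and bounding each monomial from the hypotheses) is the standard and correct approach, and is in spirit what Blomer--Khan--Young do. However, the bookkeeping in your proposal contains genuine errors, and as written the argument does not close.

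The structural claim $m = A - a + 1$ is wrong. Writing $k = \#\{i\}$ for the number of higher-derivative factors, one checks inductively that each application of $\mathcal{L}$ increases $m - k$ by exactly one, so after $A$ steps one has $m = A + k$, not $A - a + 1$; for instance $\mathcal{L}^2 w$ contains the term $w''/(h')^2$ with $a=2$, $k=0$, $m=2$, whereas your formula gives $m=1$. With the correct $m = A + k$ and $p := A - a = \sum_i(b_i - 1) \geq k$, the monomial bound is
$X\, V^{-a}\, Y^k\, Q^{-(p+k)}\, R^{-(A+k)} = X\,(RV)^{-a}\,\mu^{\,p+k}\,Y^{(k-p)/2}$, where $\mu := \sqrt{Y}/(QR)$; the algebra you write (a factor $R^{-1}(YQ^{-2}R^{-2})^{A-a}$ times a ``leftover power of $Q$'') does not reproduce this, and the intermediate inequality you assert is false as stated. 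More substantively, the passage from the monomial bound to the two-term estimate $X\big[(RV)^{-A} + \mu^A\big]$ requires a reduction you never perform. Since the trivial bound $|I| \leq (\beta - \alpha)\sup|w| \ll (\beta-\alpha)X$ always holds, one may assume without loss of generality that $RV \geq 1$ and $\mu \leq 1$ (otherwise the claimed estimate is weaker than the trivial one and there is nothing to prove). With $\mu \leq 1$ and the hypothesis $Y \geq 1$ one gets $QR \geq \sqrt{Y} \geq 1$, and together with $p \geq k$ this yields $Y^{(k-p)/2} \leq 1$; after that, $(RV)^{-a}\mu^{\,p+k} \leq (RV)^{-a}\mu^{\,p} \leq \max\{(RV)^{-A}, \mu^A\}$ since $a + p = A$ and both $(RV)^{-1}, \mu \leq 1$. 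Your appeal to ``rescaling so $Q \geq 1$'' is not the right mechanism (such a rescaling would also move $Y$, $V$, $R$) and misses entirely the role of the hypothesis $Y \geq 1$, which is essential and which your proposal never uses.
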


\begin{proof}
  See \cite[Lemma 8.1]{BlomerKhanYoung2013distribution}.
\end{proof}

\begin{lemma}\label{lemma:stationary_phase}
  Suppose $w_T$ is $X$-inert in $t_1,\ldots,t_d$, supported on  $t_i\asymp X_i$ for $i=1,2,\ldots,d$. Suppose that on the support of $w_T$, $h=h_T$ satisfies that
  \[
    \frac{\partial^{a_1+a_2+\cdots +a_d}}{\partial t_1^{a_1}\cdots \partial t_d^{a_d}} h(t_1,t_2,\ldots,t_d) \ll_{a_1,\ldots,a_d}  \frac{Y}{X_1^{a_1} X_2^{a_2}\cdots X_d^{a_d}},
  \]
  for all $a_1,\ldots,a_d\in \mathbb{Z}_{\geq0}$. Let
  \[
    I = \int_{\mathbb{R}} w_T(t_1,t_2,\ldots,t_d) e^{i h(t_1,t_2,\ldots,t_d)}  \dd t_1.
  \]
   Suppose $\frac{\partial^{2}}{\partial t_1^{2}} h(t_1,t_2,\ldots,t_d) \gg \frac{Y}{X_1^2}$
  for all $(t_1,t_2,\ldots,t_d)\in \supp w_T$, and there exists $t_0 \in\mathbb{R}$ such that $ \frac{\partial}{\partial t_1} h(t_0,t_2,\ldots,t_d)=0$.
  Suppose that $Y/X^2 \geq R \geq 1$. Then
  \[
    I
    = \frac{X_1}{\sqrt{Y}} e^{i h(t_0,t_2,\ldots,t_d)} W_T(t_2,\ldots,t_d) + O_A(X_1 R^{-A}),
  \]
  for some $X$-inert family of functions $W_T$ and any $A>0$.
\end{lemma}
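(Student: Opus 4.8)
The statement to prove is Lemma~\ref{lemma:stationary_phase}, the multivariable stationary phase lemma with inert weights.

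\medskip

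\textbf{Proof proposal.} The plan is to reduce to the classical one-dimensional stationary phase asymptotic and then verify that the resulting lower-order family of functions is again $X$-inert. First I would rescale the active variable: write $t_1 = X_1 u$ with $u \asymp 1$, so that $\tilde w_T(u, t_2,\dots,t_d) := w_T(X_1 u, t_2,\dots,t_d)$ is a smooth function of $u$ supported in a fixed compact subinterval of $(0,\infty)$ with all $u$-derivatives $\ll_j X^j$ (the inertness hypothesis in $t_1$), and $\tilde h(u) := h(X_1 u, t_2,\dots,t_d)$ satisfies $\tilde h^{(a)}(u) \ll_a Y$ for all $a \geq 0$, with $\tilde h''(u) \gg Y$ on the support, and a (necessarily unique, by the lower bound on $\tilde h''$) stationary point $u_0 = t_0/X_1$ in the relevant range. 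So after the substitution the integral is $X_1 \int \tilde w_T(u)\, e^{i \tilde h(u)}\, \dd u$, a one-variable oscillatory integral with large parameter $Y$, amplitude of ``inert size'' $X$, and second derivative of size $\asymp Y$.

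\medskip

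Next I would invoke a standard quantitative stationary phase expansion — for instance the version in Blomer--Khan--Young \cite[Main Theorem, or the version used throughout]{BlomerKhanYoung2013distribution}, or Kıral--Petrow--Young type statements — which gives
\[
  \int \tilde w_T(u)\, e^{i \tilde h(u)}\, \dd u = \frac{c}{\sqrt{Y}}\, e^{i \tilde h(u_0)}\, \tilde W_T(t_2,\dots,t_d) + O_A(R^{-A}),
\]
where the error is controlled because $Y/X^2 \geq R \geq 1$ (this is exactly the condition that the phase genuinely oscillates on the scale set by the amplitude, so that repeated integration by parts away from $u_0$, via Lemma~\ref{lemma:repeated_integration_by_parts}, kills the tails), and where $\tilde W_T$ is built from $\tilde w_T$ and derivatives of $\tilde h$ evaluated at $u_0$ through the usual asymptotic series $\sum_n Y^{-n} (\text{differential operator})^n$. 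Multiplying back by $X_1$ recovers the claimed shape with $W_T(t_2,\dots,t_d)$ a constant multiple of $\tilde W_T$. The stationary point $u_0$ depends smoothly on $(t_2,\dots,t_d)$ by the implicit function theorem applied to $\partial_u \tilde h = 0$ (using $\partial_u^2 \tilde h \gg Y \neq 0$), and $h(t_0, t_2,\dots,t_d)$ appears in the exponent; this is consistent with the statement, which leaves that phase factor explicit.

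\medskip

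\textbf{Main obstacle.} The routine part is the one-dimensional asymptotic; the step requiring care is showing $W_T$ is $X$-inert in $(t_2,\dots,t_d)$, i.e.\ that differentiating in the parameters $t_i$ ($i \geq 2$) costs at most $X/X_i$ per derivative. This requires tracking how $u_0 = u_0(t_2,\dots,t_d)$ and the derivatives $\tilde h^{(k)}(u_0)$ respond to the parameters: from $\partial_{t_i} u_0 = -\partial_{t_i}\partial_u \tilde h / \partial_u^2 \tilde h$ and the hypothesis $\partial_{t_i}^{a_i}\partial_u^{a_1} h \ll Y/(X_i^{a_i} X_1^{a_1})$, one gets $\partial_{t_i} u_0 \ll 1/X_i$, and then by induction all mixed derivatives of $u_0$ and of the asymptotic-series coefficients obey the inert bounds, so that each $t_i$-derivative of $W_T$ indeed produces only a factor $\ll X/X_i$ (the $X$ coming from the amplitude derivatives, exactly as in the inertness of $w_T$). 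One must also check that the error term $O_A(X_1 R^{-A})$ survives parameter differentiation, which follows since differentiating the integration-by-parts bound in Lemma~\ref{lemma:repeated_integration_by_parts} in $t_i$ only improves (or at worst preserves, up to the inert factor) the decay. Once these bookkeeping estimates are in place, collecting terms finishes the proof; I would cite \cite[Lemma 8.1]{BlomerKhanYoung2013distribution} for the tail estimate and the standard stationary-phase references for the main term rather than reproducing the full asymptotic expansion here.
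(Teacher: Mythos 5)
Your sketch is correct and follows essentially the same route as the references the paper cites for this lemma; the paper itself gives no proof, just pointing to \cite[\S 8]{BlomerKhanYoung2013distribution} and \cite[\S 3]{KPY}, and your rescaling-to-one-variable plus inertness bookkeeping (tracking derivatives of the stationary point via the implicit function theorem) is precisely the argument of K{\i}ral--Petrow--Young. You have also correctly flagged the genuinely nontrivial part, namely verifying that $W_T$ is $X$-inert in the spectator variables.
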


\begin{proof}
  See \cite[\S 8]{BlomerKhanYoung2013distribution} and \cite[\S 3]{KPY}.
\end{proof}

In the applications of Lemma \ref{lemma:stationary_phase}, we will explicitly give estimates of the derivatives for the first variable. For other derivatives we will also check all those conditions, but may not write them down explicitly.

%%%%%%%%%%%%%%%%%%%%%%%%%%%%%%%%%%%%%%%%%%%%%%%%%%%%%%%%%%%%%%%%
%%%%%                        Section                       %%%%%
%%%%%%%%%%%%%%%%%%%%%%%%%%%%%%%%%%%%%%%%%%%%%%%%%%%%%%%%%%%%%%%%

\section{First moment of $\GL(3)\times \GL(2)$ $L$-functions}
\label{sec:first_moment_L_functions}

In this section, we will prove Theorem \ref{thm:moment_L_functions_6}.

\subsection{Approximate functional equation}

Let $\phi$ be a Hecke--Maass cusp form with the spectral parameter $T>0$.
% and Fourier coefficients $\lambda_\phi(n)$.
The symmetric square lift $\Sym^2 \phi$ is a  Hecke--Maass cusp form for $\SL(3,\mathbb{Z})$ with Fourier coefficients $A(m,n)$.
The $\GL(3)$ $L$-function is defined as
\[
  L(s,\Sym^2 \phi) = \sum_{n\geq1} \frac{A(1,n)}{n^s}, \quad \Re(s)>1.
\]
Let $\phi_j$ be an even Hecke--Maass cusp form with the spectral parameter $t_j>0$ and Fourier coefficients $\lambda_j(n)$.
The $\GL(3)\times \GL(2)$ Rankin--Selberg $L$-function is defined as
\[
  L(s,\phi_j\times \Sym^2 \phi) = \sum_{m\geq1}\sum_{n\geq1} \frac{A(m,n)\lambda_j(n)}{(m^2n)^s}, \quad \Re(s)>1.
\]
The functional equation of $L(s,\phi_j \times \Sym^2 \phi)$ is
\[
  \Lambda(s,\phi_j \times \Sym^2 \phi)
  := L_\infty(s,\phi_j \times \Sym^2 \phi)
  L(s,\phi_j \times \Sym^2 \phi)
  = \Lambda(1-s,\phi_j \times \Sym^2 \phi)
\]
where
\[
  L_\infty(s,\phi_j \times \Sym^2 \phi) = \pi^{-3s} \prod_{\pm}
  \Gamma\left(\frac{s\pm it_j}{2}\right)
  \Gamma\left(\frac{s+2iT\pm it_j}{2}\right)
  \Gamma\left(\frac{s-2iT\pm it_j}{2}\right).
\]
Let $E_t$ be the Eisenstein series and $t\in \mathbb{R}$. We have
\[
  L(s+it, \Sym^2 \phi)L(s-it, \Sym^2 \phi)  = L(s,E_t \times \Sym^2 \phi),
\]
and  the functional equation
\[
  \Lambda(s,E_t \times \Sym^2 \phi)
  := L_\infty(s,E_t \times \Sym^2 \phi)
  L(s,E_t \times \Sym^2 \phi)
  = \Lambda(1-s,E_t \times \Sym^2 \phi)
\]
with
\[
  L_\infty(s,E_t \times \Sym^2 \phi) = \pi^{-3s} \prod_{\pm}
  \Gamma\left(\frac{s\pm it}{2}\right)
  \Gamma\left(\frac{s+2iT\pm it}{2}\right)
  \Gamma\left(\frac{s-2iT\pm it}{2}\right).
\]
Note that
\[
  |L(1/2+it, \Sym^2 \phi)|^2 = L(1/2,E_t \times \Sym^2 \phi).
\]
We have the following approximate functional equation.

\begin{lemma} \label{lemma:AFE6}
%  Let $\phi$ be a Hecke--Maass cusp form with the spectral parameters $T>0$.
%  Let $\phi_j$ be an even Hecke--Maass cusp form with the spectral parameter $t_j>0$.
%  Let $E_t$ be the Eisenstein series and $t\in \mathbb{R}$.
  With the notation as above.
  Then we have
  \[
     L(1/2,\phi_j \times \Sym^2 \phi)
     = 2 \sum_{m,n\geq1}  \frac{A(m,n)\lambda_j(n)}{m n^{1/2}} V_{t_j}\left(m^2 n\right),
  \]
  and
  \[
     |L(1/2+it, \Sym^2 \phi)|^2
     = 2 \sum_{m,n\geq1}  \frac{A(m,n)\eta_t(n)}{m n^{1/2}} V_{t}\left(m^2 n\right),
  \]
  where
  \[
    V_{t_j}\left(y \right) = \frac{1}{2\pi i} \int_{(2)} \frac{L_\infty(1/2+s,\phi_j \times \Sym^2 \phi) }{L_\infty(1/2,\phi_j \times \Sym^2 \phi) } y^{-s} G(s) \frac{\dd s}{s},
  \]
  and $V_t(y)$ is defined as above with $E_t$ replacing $\phi_j$. Here $G(s)=e^{s^2}$.
\end{lemma}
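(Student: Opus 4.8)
The plan is to run the standard contour-shift argument for an approximate functional equation, using the completed $L$-function and the entire even function $G(s)=e^{s^2}$ (which satisfies $G(0)=1$, $G(s)=G(-s)$, and has rapid decay on vertical lines). First I would form the integral
\[
  I := \frac{1}{2\pi i}\int_{(2)} \Lambda\left(1/2+s,\phi_j\times\Sym^2\phi\right)\, G(s)\,\frac{\dd s}{s},
\]
and expand $L(1/2+s,\phi_j\times\Sym^2\phi)=\sum_{m,n\ge1} A(m,n)\lambda_j(n)(m^2 n)^{-1/2-s}$ in the region $\Re(s)=2$, where the Dirichlet series converges absolutely (by Rankin--Selberg the coefficients are of polynomial growth). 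Dividing by $L_\infty(1/2,\phi_j\times\Sym^2\phi)$ and interchanging sum and integral, the $s$-integral becomes exactly $V_{t_j}(m^2 n)$ as defined in the statement, so $I/L_\infty(1/2) = \sum_{m,n} A(m,n)\lambda_j(n)(m^2n)^{-1/2} V_{t_j}(m^2 n)$.

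Next I would move the contour from $\Re(s)=2$ to $\Re(s)=-2$. The only pole of the integrand in this strip is the simple pole of $1/s$ at $s=0$, with residue $\Lambda(1/2,\phi_j\times\Sym^2\phi)\,G(0)=\Lambda(1/2,\phi_j\times\Sym^2\phi)$; there are no poles from $\Lambda$ itself since $\phi_j$ is a fixed cusp form and $\Sym^2\phi$ is cuspidal on $\GL(3)$, so the Rankin--Selberg $\Lambda$ is entire, and the Gamma factors contribute no poles for $\Re(s)\ge -2$ once $t_j,T>0$ (their poles sit further left). The shift is justified by the rapid decay of $G(s)$ on vertical lines together with polynomial bounds for $\Lambda$ in vertical strips (convexity/Phragmén--Lindelöf). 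On the new line $\Re(s)=-2$ I would substitute $s\mapsto -s$ and apply the functional equation $\Lambda(1/2-s,\phi_j\times\Sym^2\phi)=\Lambda(1/2+s,\phi_j\times\Sym^2\phi)$ (note the root number is $+1$ here, since $\phi_j\times\Sym^2\phi$ is self-dual), together with $G(-s)=G(s)$ and $\dd s/(-s)=-\dd s/s$; this transforms the shifted integral back into $I$ itself. Hence $2I = \Lambda(1/2,\phi_j\times\Sym^2\phi)$, and dividing through by $L_\infty(1/2,\phi_j\times\Sym^2\phi)$ gives $L(1/2,\phi_j\times\Sym^2\phi)=2\sum_{m,n} A(m,n)\lambda_j(n)(m^2n)^{-1/2} V_{t_j}(m^2 n)$, which is the first claimed identity.

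For the second identity, I would repeat the argument verbatim with $\phi_j$ replaced by the Eisenstein series $E_t$: here $L(s,E_t\times\Sym^2\phi)=L(s+it,\Sym^2\phi)L(s-it,\Sym^2\phi)$ has Dirichlet coefficients $A(m,n)\eta_t(n)$, the completed product $\Lambda(s,E_t\times\Sym^2\phi)$ satisfies the stated functional equation with root number $+1$, and one checks it is entire — the only subtlety being that $L(s,\Sym^2\phi)$ is itself entire (Shimura), so no extra poles are introduced when $t\ne 0$, and when $t=0$ any potential pole is cancelled. The residue computation at $s=0$ then yields $\Lambda(1/2,E_t\times\Sym^2\phi)$, and since $|L(1/2+it,\Sym^2\phi)|^2 = L(1/2,E_t\times\Sym^2\phi)$, dividing by $L_\infty(1/2,E_t\times\Sym^2\phi)$ gives the second formula with $V_t$. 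The main obstacle, such as it is, is purely bookkeeping: verifying that the relevant completed $L$-function is entire (so that $s=0$ is the only pole crossed) and that the Gamma-factor ratio defining $V_{t_j}$ converges and is well-behaved — both of which are standard once one invokes cuspidality of $\Sym^2\phi$ on $\GL(3)$ and the holomorphy of the symmetric-square $L$-function; there is no genuinely hard analytic point here.
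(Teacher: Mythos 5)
Your proof is correct, and it is the same argument as the paper's: the paper simply cites \cite[Theorem 5.3]{IwaniecKowalski2004analytic}, and what you have written out is precisely the standard contour-shift proof of that theorem specialized to $\Lambda(s,\phi_j\times\Sym^2\phi)$ (resp.\ $\Lambda(s,E_t\times\Sym^2\phi)$), using the self-duality, root number $+1$ for even $\phi_j$, entirety of the completed Rankin--Selberg $L$-function, and the symmetry $G(-s)=G(s)$ of $G(s)=e^{s^2}$.
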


\begin{proof}
 This is \cite[Theorem  5.3]{IwaniecKowalski2004analytic}.
\end{proof}

\begin{lemma}
   Assume $|t_j-T|\leq T^{1/2+\varepsilon}$. Then we have
  \[ y^k V_{t_j}^{(k)}(y) \ll_{k,A} \left( 1+ \frac{y}{T^3} \right)^{-A},\]
  for any $A>0$.
  Let $U=(\log T)^2$. Then we have
  \[
    V_{t_j}(y) = \sum_{k=0}^{6} \sum_{l=0}^{12} T^{-k} \left(\frac{t_j^2-T^2}{T^2}\right)^{l} V_{k,l}\left(\frac{y}{T^3}\right)
    + O\left(y^{-\varepsilon}T^\varepsilon e^{-U} + T^{-5+\varepsilon}\right),
  \]
  where
  \[
    V_{k,l}\left(\frac{y}{T^3}\right) = \frac{1}{2\pi i}
    \int_{\varepsilon-iU}^{\varepsilon+iU} Q_{k,l}(s)  \left(\frac{3}{8 \pi^3}\right)^s y^{-s} G(s) \frac{\dd s}{s}.
  \]
\end{lemma}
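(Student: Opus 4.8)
The plan is to analyze $V_{t_j}(y)$ by exploiting Stirling's formula for the ratio of gamma factors, after first verifying the rapid decay. For the first bound, shift the contour in the definition of $V_{t_j}(y)$ to $\Re(s) = A$ (picking up no poles since $G(s)/s$ is holomorphic away from $s=0$ and the gamma ratio is holomorphic in $\Re(s) > -1/2 + \varepsilon$ in this range of parameters). On the shifted line, the analytic conductor of $L(s, \phi_j \times \Sym^2\phi)$ is of size $\asymp T^6$ (three archimedean factors of size $\asymp T^2$ each, using $|t_j - T| \leq T^{1/2+\varepsilon}$ so $t_j \asymp T$), hence $L_\infty(1/2+s, \ldots)/L_\infty(1/2, \ldots) \ll (T^6)^{\Re(s)/2} = T^{3\Re(s)}$ by Stirling; combined with the factor $y^{-s} G(s)$ this gives $V_{t_j}(y) \ll (y/T^3)^{-A}$, and the same contour shift argument applied after differentiating under the integral (each derivative in $y$ pulls out a factor $-s/y$, absorbed by $G(s) = e^{s^2}$) gives $y^k V_{t_j}^{(k)}(y) \ll_{k,A} (1 + y/T^3)^{-A}$. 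This part is routine.

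For the asymptotic expansion, the main work is to Taylor-expand the gamma ratio. First truncate the contour: since $G(s) = e^{s^2}$ decays like $e^{-U^2}$ on $\Re(s) = \varepsilon$, $|\Im(s)| = U$ with $U = (\log T)^2$, moving to the segment $[\varepsilon - iU, \varepsilon + iU]$ costs only $O(y^{-\varepsilon} T^\varepsilon e^{-U})$ (the $y^{-\varepsilon}$ from $|y^{-s}| = y^{-\varepsilon}$ and $T^\varepsilon$ from the crude bound on the gamma ratio on $\Re(s) = \varepsilon$). On the truncated segment, apply Stirling's formula \eqref{eqn:Stirling_7} to each of the six gamma factors $\Gamma\left(\frac{1/2+s \pm it_j}{2}\right)$, $\Gamma\left(\frac{1/2+s+2iT \pm it_j}{2}\right)$, $\Gamma\left(\frac{1/2+s-2iT \pm it_j}{2}\right)$ in the numerator and the corresponding six in the denominator at $s$ replaced by $0$. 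The leading exponential and power-of-$t$ factors: after taking the ratio, the purely imaginary "phase" contributions (terms like $it \log|t|$) cancel to leading order between numerator and denominator since they differ only by the shift $s \to 0$; what remains is a factor $\left(\frac{\text{conductor}}{8\pi^3}\right)^{-s}$ together with a power series correction. The conductor combination works out to $t_j^2 (4T^2 - t_j^2)^2 / (2\pi)^6$ up to lower order, but using $t_j^2 = T^2 + (t_j^2 - T^2)$ and $|t_j^2 - T^2| \leq T^{3/2 + \varepsilon}$ — wait, more carefully $|t_j - T| \leq T^{1/2+\varepsilon}$ gives $|t_j^2 - T^2| \leq T^{3/2+\varepsilon}$, so $(t_j^2 - T^2)/T^2 \leq T^{-1/2+\varepsilon}$ is a genuine small parameter — one expands in powers of $(t_j^2 - T^2)/T^2$ and in powers of $1/T$ (from the $1/(12z)$-type Stirling corrections, each of which is $\ll 1/T$). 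Collecting terms up to total order $T^{-5+\varepsilon}$ in the error forces keeping $k \leq 6$ powers of $T^{-1}$ and $l \leq 12$ powers of $(t_j^2-T^2)/T^2$ (since $(T^{-1/2})^{12} = T^{-6}$), which is exactly the stated shape. The coefficients $Q_{k,l}(s)$ are polynomials in $s$ (of bounded degree) arising from expanding the product of the Stirling series and the exponentials $e^{-s \cdot (\text{stuff})}$; the main term $(3/(8\pi^3))^s$ comes from the precise constant $3/(8\pi^3)$ in the leading conductor $8\pi^3 T^3 / 3$ of $L(s, \phi_j \times \Sym^2\phi)$ at the central point.

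The main obstacle will be organizing the Stirling expansion bookkeeping cleanly: one must track which powers of $s$, of $T^{-1}$, and of $(t_j^2 - T^2)/T^2$ appear, confirm that the phases cancel so that no factor $e^{icT}$ survives (only the polynomial-in-$s$ pieces and the clean $(3/(8\pi^3))^s y^{-s}$ remain), and verify that the tail of the double expansion — terms with $k \geq 7$ or $l \geq 13$, plus the $O(|z|^{-7})$ Stirling remainders — is genuinely $\ll T^{-5+\varepsilon}$ on the truncated contour where $|s| \leq U = (\log T)^2$ contributes only $T^\varepsilon$. I would handle this by writing $\log$ of the gamma ratio as a finite asymptotic series plus error, exponentiating, and then expanding the exponential; the $e^{-U}$ and $T^{-5+\varepsilon}$ error terms in the statement are exactly the truncation error and the combined tail error respectively. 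The substitution $s \mapsto s$ in $V_{k,l}$ with the explicit kernel $Q_{k,l}(s) (3/(8\pi^3))^s y^{-s} G(s)/s$ then records the result.
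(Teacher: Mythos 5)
Your proposal follows the same route as the paper: truncate the Mellin contour to $|\Im s| \leq U = (\log T)^2$ using the rapid decay of $G(s)=e^{s^2}$ (giving the $e^{-U}$ error), apply Stirling to the gamma ratio, Taylor-expand in the two small parameters $1/T$ and $(t_j^2-T^2)/T^2$, and keep terms up to $k\le 6$, $l\le 12$ so the remainder is $O(T^{-5+\varepsilon})$. One organizational point you leave a bit vague and the paper makes explicit: the unwanted phases cancel not merely between numerator and denominator (which kills the $it\log t$ phase but leaves residual factors like $z_0^{s/2}$ with $z_0\approx it/2$, hence an $e^{i\pi s/4}$-type phase), but because each factor $\Gamma\bigl(\tfrac{1/2+s+ it}{2}\bigr)/\Gamma\bigl(\tfrac{1/2+it}{2}\bigr)$ is paired with its conjugate at $-it$, so that $\prod_{\pm}$ yields the manifestly real leading term $(t^2/4)^{s/2}\bigl(1+P_2(s)/t^2+P_4(s)/t^4+O(T^{-5})\bigr)$; you do flag this as a verification step, so it is a matter of presentation rather than a missing idea.
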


\begin{proof}
The first claim is  \cite[Proposition 5.4]{IwaniecKowalski2004analytic}.
The proof of the second claim is similar to \cite[\S5]{young2017} and \cite[Lemma 2.2]{huang2021hybrid}.
By Stirling's formula we have
\begin{equation}\label{eqn:V8U}
   V_{t_j}(y) = \frac{1}{2\pi i} \int_{\varepsilon-iU}^{\varepsilon+iU}
   \frac{L_\infty(1/2+s,\phi_j \times \Sym^2 \phi) }{L_\infty(1/2,\phi_j \times \Sym^2 \phi) }
%   \pi^{-3s} \prod_{\pm} \frac{
%  \Gamma\left(\frac{1/2+s\pm it_j}{2}\right)
%  \Gamma\left(\frac{1/2+s+2iT\pm it_j}{2}\right)
%  \Gamma\left(\frac{1/2+s-2iT\pm it_j}{2}\right)}
%  {\Gamma\left(\frac{1/2\pm it_j}{2}\right)
%  \Gamma\left(\frac{1/2+2iT\pm it_j}{2}\right)
%  \Gamma\left(\frac{1/2-2iT\pm it_j}{2}\right)}
  y^{-s} G(s) \frac{\dd s}{s}
  + O(y^{-\varepsilon}T^\varepsilon e^{-U}).
\end{equation}
Let $t\asymp T$ and $t>0$.
Since $|s|\ll U$, by Stirling's formula \eqref{eqn:Stirling_7} we have
\begin{align*}
   \prod_{\pm} \frac{  \Gamma\left(\frac{1/2+s\pm it}{2}\right)}
   { \Gamma\left(\frac{1/2\pm it}{2}\right)}  &
  = \left( \frac{t^2}{4} \right)^{s/2} \left( 1 + \frac{P_{2}(s)}{t^{2}}+ \frac{P_{4}(s)}{t^{4}} + O\left( T^{-5} \right) \right),
\end{align*}
where $P_{2}(s)=(3 + 10 s + 3 s^2 - 4 s^3)/12 $ and $P_4(s)=(-48 s - 580 s^2 - 660 s^3 - 175 s^4 + 168 s^5 + 80 s^6) /1440$ are polynomials.
Hence we have
\begin{multline*}
  \frac{L_\infty(1/2+s,\phi_j \times \Sym^2 \phi) }{L_\infty(1/2,\phi_j \times \Sym^2 \phi) }
%   \pi^{-3s} \prod_{\pm} \frac{
%  \Gamma\left(\frac{1/2+s\pm it_j}{2}\right)
%  \Gamma\left(\frac{1/2+s+2iT\pm it_j}{2}\right)
%  \Gamma\left(\frac{1/2+s-2iT\pm it_j}{2}\right)}
%  {\Gamma\left(\frac{1/2\pm it_j}{2}\right)
%  \Gamma\left(\frac{1/2+2iT\pm it_j}{2}\right)
%  \Gamma\left(\frac{1/2-2iT\pm it_j}{2}\right)}
=  \pi^{-3s}  \left( \frac{t_j^2}{4} \right)^{s/2}
 \left( \frac{4T^2-t_j^2}{4} \right)^{s}
 \left( 1 + \sum_{k=1}^{2}\frac{P_{2k}(s)}{t_j^{2k}}    \right)
 \\ \cdot
 \left( 1 + \sum_{k=1}^{2}\frac{P_{2k}(s)}{(2T+t_j)^{2k}}    \right)
 \left( 1 + \sum_{k=1}^{2}\frac{P_{2k}(s)}{(2T-t_j)^{2k}}    \right)
 \left( 1  + O\left( T^{-5} \right) \right) .
 \end{multline*}
 Since $|t_j-T|\leq T^{1/2+\varepsilon}$, we have
 \begin{multline*}
 \left( 1 + \sum_{k=1}^{2}\frac{P_{2k}(s)}{(2T+t_j)^{2k}}    \right)
 \left( 1 + \sum_{k=1}^{2}\frac{P_{2k}(s)}{(2T-t_j)^{2k}}    \right)
 \\
% = \left( 1 +  \frac{P_{2}(s)}{(2T+t_j)^{2}} + \frac{P_{2}(s)}{(2T-t_j)^{2}}
% +  \frac{P_{2}(s)^2 }{(4T^2-t_j^2)^{2}}
% +  \frac{P_{4}(s)}{(2T+t_j)^{4}}   + \frac{P_{4}(s)}{(2T-t_j)^{4}}   \right)
% \left( 1  + O\left(\frac{1}{T^{5}} \right) \right) \\
 = \left( 1
 +  \frac{2 P_{2}(s) (4T^2+t_j^2) + P_{2}(s)^2 }{(4T^2-t_j^2)^{2}}
 +  \frac{2 P_{4}(s) (16 T^4 + 24 t_j^2 T^2 +t_j^4)}{(4T^2-t_j^2)^{4}}     \right)
 \left( 1  + O\left( T^{-5}\right) \right) .
\end{multline*}
Note that $t_j^2 = T^2 \left(1+\frac{t_j^2-T^2}{T^2}\right)$, $\Re(s)=\varepsilon$ and $|\Im(s)|\leq U$. We have
\begin{multline*}
   \left( \frac{t_j^2}{4} \right)^{s/2}
   = \exp\left(\frac{s}{2} \log \frac{T^2}{4} + \frac{s}{2} \log \left(1+\frac{t_j^2-T^2}{T^2}\right) \right) \\
   = \left( \frac{T}{2} \right)^{s}  \sum_{l=0}^{20} Q_{l,1}(s) \left(\frac{t_j^2-T^2}{T^2}\right)^l + O(T^{-9}),
\end{multline*}
\begin{multline*}
   \left( \frac{4T^2-t_j^2}{4} \right)^{s}
   = \exp\left( s \log \frac{3T^2}{4} + s \log \left(1+\frac{t_j^2-T^2}{3T^2}\right) \right) \\
   = \left( \frac{3T^2}{4} \right)^{s}  \sum_{l=0}^{20} Q_{l,2}(s) \left(\frac{t_j^2-T^2}{T^2}\right)^l + O(T^{-9}),
\end{multline*}
\begin{equation*}
   \frac{1}{t_j^{2k}}
   = \frac{1}{T^{2k} \left(1+\frac{t_j^2-T^2}{T^2}\right)^k} \\
   = \frac{1}{T^{2k}}  \sum_{l=0}^{20} Q_{k,l} \left(\frac{t_j^2-T^2}{T^2}\right)^l + O(T^{-9}),
\end{equation*}
\begin{equation*}
    \frac{1}{(4T^2-t_j^2)^{k}}
   = \frac{1}{3^{k}T^{2k} \left(1+\frac{t_j^2-T^2}{3T^2}\right)^{k} } \\
   = \frac{1}{3^{k} T^{2k}}  \sum_{l=0}^{20} Q_{k,l} \left(\frac{t_j^2-T^2}{3T^2}\right)^l + O(T^{-9}),
\end{equation*}
where $Q_{l,1}(s)$ and $Q_{l,2}(s)$ are certain polynomials of degree $\leq l$, and $Q_{k,l}$ are constants.  Hence we obtain
\begin{equation*}
  \frac{L_\infty(1/2+s,\phi_j \times \Sym^2 \phi) }{L_\infty(1/2,\phi_j \times \Sym^2 \phi) }
  =  \left(\frac{3 T^3}{8 \pi^3}\right)^s \sum_{k=0}^{6}
  \sum_{\ell=0}^{12}  Q_{k,l}(s) \frac{1}{ T^{k} } \frac{(t_j^2-T^2)^\ell}{T^{2\ell}} + O(T^{-5}),
\end{equation*}
where $Q_{k,l}(s)$ are certain polynomials.
By \eqref{eqn:V8U} we complete the proof of the lemma.
\end{proof}

\subsection{Applying the Kuznetsov trace formula}
Define
\[
  \mathcal{S} = \sideset{}{'}\sum_{t_j} h_{T,M}(t_j) \omega_j L(1/2,\phi_j\times \Sym^2 \phi) + \frac{1}{4\pi} \int_{\mathbb{R}} h_{T,M}(t) \omega(t) |L(1/2+it,\Sym^2 \phi)|^2 \dd t,
\]
where $'$ means summing over even forms and
\[
  h_{T,M}(t) = \exp\left( - \frac{(t-T)^2}{M^2}\right)+ \exp\left( - \frac{(t+T)^2}{M^2}\right).
\]
Note that $L(1/2,\phi_j\times \Sym^2 \phi)=0$ for any odd $\phi_j$.
By Lemma \ref{lemma:AFE6} we get
\begin{equation}\label{eqn:S2Skl}
  \mathcal{S} = 2 \sum_{k=0}^{6} \sum_{l=0}^{12} T^{-k} \mathcal{S}_{k,l} + O(T^{-5/2+\varepsilon} M),
\end{equation}
where
\begin{multline*}
  \mathcal{S}_{k,l} = \sum_{m,n\geq1} \frac{A(m,n)}{m n^{1/2}} V_{k,l}\left(\frac{m^2 n}{T^3} \right) \Big(
  \sideset{}{'}\sum_{t_j} h_{T,M}(t_j)\left(\frac{t_j^2-T^2}{T^2}\right)^{l}  \omega_j \lambda_j(n)
  \\
  + \frac{1}{4\pi} \int_{\mathbb{R}} h_{T,M}(t)\left(\frac{t^2-T^2}{T^2}\right)^{l}  \omega(t) \eta_t(n) \dd t  \Big).
\end{multline*}
We only consider the case $l=0$, since we save additional powers of $T$ when $l\geq1$.
By Lemma \ref{lemma: KTF}, we get
%\[
%  \mathcal{S}_{k,l} = \sum_{m,n\geq1} \frac{A(m,n)}{m n^{1/2}} V_{k,l}\left(\frac{m^2 n}{T^3} \right) \left(  \frac{1}{2}\delta_{1,n}H + \frac{1}{2}\sum_{c\geq1}\frac{1}{c}\sum_{\pm} S(n,\pm 1;c)H^{\pm}\left(\frac{4\pi\sqrt{n}}{c}\right) \right).
%\]
\begin{equation}\label{eqn:Skl=D+R}
  \mathcal{S}_{k,l} =
  \mathcal{D} + \sum_{\pm} \mathcal{R}^\pm,
\end{equation}
where the diagonal term is
\[
  \mathcal{D} = \sum_{m,n\geq1} \frac{A(m,n)}{m n^{1/2}} V_{k,l}\left(\frac{m^2 n}{T^3} \right) \frac{1}{2}\delta_{1,n}H
\]
and the off-diagonal terms are
\begin{equation*}%\label{eqn:R}
  \mathcal{R}^{\pm} = \frac{1}{2}\sum_{m,n\geq1} \frac{A(m,n)}{m n^{1/2}} V_{k,l}\left(\frac{m^2 n}{T^3} \right) \sum_{c\geq1}\frac{S(n,\pm 1;c)}{c} H^{\pm}\left(\frac{4\pi\sqrt{n}}{c}\right) .
\end{equation*}

By \cite{GelbartJacquet} we know that $\Sym^2\phi$ is automorphic. By the Rankin--Selberg theory we have (see e.g.\ \cite{li2010upper})
\begin{equation}\label{eqn:RS3}
  \sum_{m^2n\leq x} |A(m,n)|^2 \ll (Tx)^\varepsilon x.
\end{equation}
Note that $H\ll TM$.
%By the Rankin--Selberg bound $\sum_{m\leq x} |A(m,1)|^2 \ll (Tx)^\varepsilon x$ (see e.g. Li \cite{li2010upper}), we have
Hence we have
\begin{equation}\label{eqn:D}
   \mathcal{D} \ll T^{1+\varepsilon } M .
\end{equation}

Inserting a smooth partition of unity we get
\begin{equation}\label{eqn:R<<R(N)}
    \mathcal{R}^{\pm} \ll \log T \cdot \sup_{1\leq N \leq T^{3+\varepsilon}} |\mathcal{R}^{\pm}(N)| +  O(T^{-2022}) ,
\end{equation}
where
\begin{equation*}
  \mathcal{R}^{\pm}(N)
  = \frac{1}{N^{1/2}} \sum_{m,n\geq1}  A(m,n)  V \left(\frac{m^2 n}{N} \right)
  \sum_{ c\geq 1}\frac{S(n,\pm 1;c)}{c} H^{\pm}\left(\frac{4\pi\sqrt{n}}{c}\right),
\end{equation*}
and $V$ is an $1$-inert function with $\supp V \subseteq [1,2]$.
Theorem \ref{thm:moment_L_functions_6} will follow upon proving that $\mathcal{R}^{\pm}(N) \ll T^{3/2+\varepsilon}/M^{1/2}$ for
$T^{1/3+\varepsilon}\leq M\leq T^{1/2-\varepsilon}$.

\subsection{Applying the Voronoi summation formula}
We first consider $\mathcal{R}^+(N)$.
By Lemma \ref{lemma: H+}, we have
\begin{multline*}
  \mathcal{R}^{+}(N) = \frac{1}{N^{1/2}} \sum_{m\geq1}\sum_{n\geq1} A(m,n)  V\left(\frac{m^2 n}{N} \right) \sum_{1\leq c\ll \frac{\sqrt{N}}{T^{1-\varepsilon}Mm} }\frac{S(n, 1;c)}{c}  \\
  \cdot
  MT\int_{|v|\leq \frac{M^\varepsilon}{M}}\cos\left(\frac{4\pi\sqrt{n}}{c}\cosh(v)\right) e\left(\frac{vT}{\pi}\right)g(Mv) \dd v + O(T^{-2022}).
\end{multline*}
Making a change of variable $cm=r$, we get
\begin{multline*}
  \mathcal{R}^{+}(N)
  = \frac{MT}{2 N^{1/2}}  \int_{|v|\leq \frac{M^\varepsilon}{M}} \sum_{1\leq r\ll \frac{\sqrt{N}}{T^{1-\varepsilon}M} }\frac{1}{r}  \sum_{m\mid r} \sum_{n\geq1} m A(m,n)  S(n, 1;r/m) V\left(\frac{m^2 n}{N} \right)  \\
  \cdot \sum_{\sigma_1=\pm }
  e\left(\sigma_1\frac{2m\sqrt{n}}{r}\cosh(v)\right)
  e\left(\frac{vT}{\pi}\right)g(Mv)\dd v + O(T^{-2022}) .
\end{multline*}

Let $w$ be a smooth compactly supported function on $(0,\infty)$,
and let $\tilde{w}(s):=\int_{0}^{\infty} w(x)x^s\frac{\dd x}{x}$
be its Mellin transform.
Let
\begin{equation}\label{eqn:gamma3}
  L_\infty(s,\Sym^2\phi) = \pi^{-3s/2} \Gamma\left(\frac{s-2iT}{2}\right) \Gamma\left(\frac{s}{2}\right)\Gamma\left(\frac{s+2iT}{2}\right).
\end{equation}
We have the following Voronoi summation formula of Miller--Zhou \cite{MZ}.
\begin{lemma}\label{lemma:VSF}
  For $w\in C_c^\infty(0,\infty)$ define its Hankel transform $W$ by
  \begin{equation}\label{eqn:W+-}
    W^\pm (y) = \frac{1}{2\pi i} \int_{(-3)} G^\pm(s) \tilde{w}(s) y^{s-1} \dd s,
  \end{equation}
  where $\tilde{w}(s)$ is the Mellin transform of $w$,  and
  \[
    G^\pm (s) = \frac{L_\infty(1-s,\Sym^2\phi)}{L_\infty(s,\Sym^2\phi)} \pm i^{-3} \frac{L_\infty(2-s,\Sym^2\phi)}{L_\infty(1+s,\Sym^2\phi)}.
  \]
  Let $a,\bar{a},r$ be integers with $a\bar{a}\equiv1 \pmod{r}$ and $r>0$. Then we have
  \begin{equation*}
    \sum_{n_2\mid r} \sum_{n_1\geq1} n_2 A(n_1,n_2) S(n_1,\bar{a},r/n_2) w(n_1n_2^2) \\
    = \sum_{\pm } \sum_{n\geq1} \frac{A(1,n)}{r} e\left(\pm \frac{an}{r}\right) W^\pm \left( \frac{n}{r^3} \right).
  \end{equation*}
\end{lemma}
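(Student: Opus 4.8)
The plan is to prove the $\GL(3)$ Voronoi summation formula (Lemma \ref{lemma:VSF}) by deriving it from the functional equations of the twisted $L$-functions $L(s, \Sym^2\phi, \bar a/r) = \sum_{n\geq 1} A(1,n) e(\bar a n / r) n^{-s}$ together with the Hecke relations that express $A(n_1, n_2)$ in terms of the $A(1,\cdot)$. First I would open the Mellin integral defining $W^\pm$ and unfold it against the Dirichlet series for the dual sum $\sum_n A(1,n) e(\pm an/r) n^{s-1}$; after interchanging sum and integral (justified by absolute convergence on the line $\Re(s)=-3$, using the Rankin--Selberg bound \eqref{eqn:RS3} and rapid decay of $\tilde w$), the $n$-sum becomes a twisted $L$-value and the integrand is $G^\pm(s) \tilde w(s) r^{-3(s-1)}$ times that $L$-value evaluated at $1-s$. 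The ratios of archimedean factors $L_\infty(1-s,\Sym^2\phi)/L_\infty(s,\Sym^2\phi)$ and the $i^{-3}$-shifted companion are precisely the gamma-factor ratios that appear in the functional equation of the additively twisted symmetric-square $L$-function, so one collapses the two terms of $G^\pm$ into a single completed functional equation, producing $L(1-s, \Sym^2\phi, \overline{\mp a}/r)$ on the dual side.

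Next I would move the contour from $\Re(s) = -3$ to $\Re(s) = 2$ (or past it), picking up no residues since $\Sym^2\phi$ is cuspidal and hence $L(s,\Sym^2\phi)$ and all its additive twists are entire; on the new line the twisted $L$-value is given by its absolutely convergent Dirichlet series $\sum_{n_1} A(1,n_1) e(\pm a' n_1/r) n_1^{-s}$ for the appropriate residue $a'$. Substituting and interchanging again, the $s$-integral reduces to $\frac{1}{2\pi i}\int \tilde w(s) (n_1 \cdot \text{stuff})^{-s}\,\dd s = w(\cdots)$ by Mellin inversion, and the remaining arithmetic is to reorganize the resulting Gauss/Kloosterman sums. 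Here one uses the classical identity decomposing $S(n_1, \bar a; r)$-type sums and the Hecke multiplicativity $n_2 A(n_1, n_2) = \sum_{d \mid (n_1,n_2)} \mu(d) A(n_1/d, 1) A(1, n_2/d) \cdots$ (more precisely the $\GL(3)$ Hecke relations) to recognize the left-hand side $\sum_{n_2 \mid r}\sum_{n_1} n_2 A(n_1,n_2) S(n_1,\bar a; r/n_2) w(n_1 n_2^2)$ as exactly what the dual expansion produces. I would cite Miller--Zhou \cite{MZ} for the precise bookkeeping of the divisor structure in $n_2 \mid r$, since their formulation is tailored to this ramified (non-squarefree modulus) setting.

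The main obstacle I expect is the combinatorial matching of the ramified/non-squarefree part: the sum $\sum_{n_2\mid r} n_2 A(n_1,n_2) S(n_1, \bar a; r/n_2)$ is not a single additive twist of one $L$-function but a sum over the ways the modulus $r$ factors against the second index $n_2$, and reconciling this with the clean dual side $\frac{1}{r}\sum_\pm \sum_n A(1,n) e(\pm an/r) W^\pm(n/r^3)$ requires carefully tracking how Gauss sums attached to prime powers dividing $r$ interact with the Hecke relations for $A(p^a, p^b)$. For prime $r$ this is the standard $\GL(3)$ Voronoi formula of Miller--Schmid / Goldfeld--Li, but the prime-power case needs the extra terms; fortunately this is precisely the content of \cite{MZ}, so in the write-up I would reduce to their Theorem rather than rederive it, checking only that our normalization of $L_\infty(s,\Sym^2\phi)$ in \eqref{eqn:gamma3} and our contour $\Re(s) = -3$ agree with theirs. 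Thus the proof is essentially a citation plus a verification that the hypotheses ($w \in C_c^\infty(0,\infty)$, $a\bar a \equiv 1 \pmod r$, $\Sym^2\phi$ cuspidal self-dual on $\GL(3)$) are met, which they are.

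\begin{proof}
This is a special case of the $\GL(3)$ Voronoi summation formula; see Miller--Zhou \cite[Theorem 1.18 and \S4]{MZ}, whose normalization of the archimedean factors matches \eqref{eqn:gamma3}. Indeed, applying their formula to the self-dual cusp form $\Sym^2\phi$ for $\SL(3,\mathbb Z)$, with test function $w\in C_c^\infty(0,\infty)$ and modulus $r$ with $a\bar a\equiv 1\pmod r$, gives exactly the stated identity after identifying $G^\pm(s)$ with the ratio of completed $L$-factors appearing in the functional equation of the additively twisted symmetric-square $L$-function. The interchange of summation and integration on the line $\Re(s)=-3$ is justified by the Rankin--Selberg bound \eqref{eqn:RS3} together with the rapid decay of $\tilde w(s)$ in vertical strips, and the absence of polar terms is a consequence of the cuspidality of $\Sym^2\phi$.
\end{proof}
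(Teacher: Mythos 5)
Your proposal is correct and takes essentially the same approach as the paper: the paper does not prove Lemma~\ref{lemma:VSF} either, but simply cites it as the Voronoi summation formula of Miller--Zhou \cite{MZ} (stated just before the lemma with no proof environment attached). Your longer preliminary discussion of how one would derive it from twisted functional equations and Hecke relations is a reasonable sketch of what is inside \cite{MZ}, but the operative content of your proof block — a citation with a normalization check — matches the paper exactly.
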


We need the following sign sensitive expression of $G^{\pm}(s)$, which is due to Blomer \cite[P. 1397]{blomer2012subconvexity}. Note that there is a typo. The following lemma follows from the initial identity \eqref{eqn:gamma3} coupled with standard applications of the Legendre duplication formula and Euler's reflection formula.
%Since there is a typo for the formula for $G^\pm(s)$, so for completeness we give some details of the proof here.

\begin{lemma}\label{lemma:G}
  We have
  \begin{multline*}
    G^\pm (s)
    =  \frac{ \pi^{-3+3s}  i }{   2^{ 2 - 3s }   }
   \Gamma( 1-s+2iT )\Gamma( 1-s )\Gamma( 1-s-2iT )
   \\ \cdot
    \left(   e\Big(\frac{\pm 3s}{4}\Big)
    + e\Big(\frac{\mp s}{4}\Big) \Big(1+e(iT)+e(-iT)\Big) \right).
  \end{multline*}
\end{lemma}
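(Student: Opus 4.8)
The goal is to write $G^{\pm}(s)$, originally given in terms of the ratios of archimedean factors $L_\infty(1-s,\Sym^2\phi)/L_\infty(s,\Sym^2\phi)$ and $L_\infty(2-s,\Sym^2\phi)/L_\infty(1+s,\Sym^2\phi)$, in a form where the gamma factors are the same for both terms and the difference between the two terms is carried entirely by simple exponentials. The plan is a direct computation starting from \eqref{eqn:gamma3}, namely $L_\infty(s,\Sym^2\phi)=\pi^{-3s/2}\Gamma(\tfrac{s-2iT}{2})\Gamma(\tfrac s2)\Gamma(\tfrac{s+2iT}{2})$.

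First I would compute the first ratio. Writing out
\[
\frac{L_\infty(1-s,\Sym^2\phi)}{L_\infty(s,\Sym^2\phi)}
=\pi^{-3/2+3s}\prod_{\epsilon\in\{0,\pm 2iT\}}\frac{\Gamma\!\big(\tfrac{1-s+\epsilon}{2}\big)}{\Gamma\!\big(\tfrac{s+\epsilon}{2}\big)},
\]
I would clear each of the three quotients $\Gamma(\tfrac{1-s+\epsilon}{2})/\Gamma(\tfrac{s+\epsilon}{2})$ using Euler's reflection formula $\Gamma(z)\Gamma(1-z)=\pi/\sin(\pi z)$ (applied with $z=\tfrac{s+\epsilon}{2}$, so that $1-z=\tfrac{2-s-\epsilon}{2}$) together with the recursion $\Gamma(\tfrac{2-s-\epsilon}{2})=\tfrac{-s-\epsilon}{2}\,\Gamma(\tfrac{-s-\epsilon}{2})$, reorganized so as to produce a $\Gamma(1-s+\epsilon)$ via the Legendre duplication formula $\Gamma(w)\Gamma(w+\tfrac12)=2^{1-2w}\sqrt{\pi}\,\Gamma(2w)$ with $w=\tfrac{1-s+\epsilon}{2}$. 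This turns $\Gamma(\tfrac{1-s+\epsilon}{2})\Gamma(\tfrac{2-s+\epsilon}{2})$ into a multiple of $\Gamma(1-s+\epsilon)$, and after bookkeeping the powers of $2$ and $\pi$ and rewriting $1/\sin$ in terms of exponentials $e(\pm\cdot/4)$ (using $\sin(\pi z)=\tfrac{1}{2i}(e^{i\pi z}-e^{-i\pi z})$ and, where needed, $\cos$) one obtains the first ratio as $\Gamma(1-s+2iT)\Gamma(1-s)\Gamma(1-s-2iT)$ times $\pi^{-3+3s}2^{3s-?}$ times a sum of exponential terms in $s$ and $T$. I would carry out the same manipulation for the second ratio $L_\infty(2-s,\Sym^2\phi)/L_\infty(1+s,\Sym^2\phi)$, which differs only by the shift $s\mapsto s+1$ in the arguments and hence produces the \emph{same} product $\Gamma(1-s+2iT)\Gamma(1-s)\Gamma(1-s-2iT)$ (after a compensating use of the recursion) multiplied by a different combination of exponentials; this common gamma product is precisely what makes the factorization in the statement possible.

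Then I would add the two contributions, namely $(\text{first ratio})\pm i^{-3}(\text{second ratio})$, factor out the common $\tfrac{\pi^{-3+3s}i}{2^{2-3s}}\Gamma(1-s+2iT)\Gamma(1-s)\Gamma(1-s-2iT)$, and simplify the remaining bracket of exponentials. The exponentials coming from the three $1/\sin$ or $1/\cos$ factors with shifts $0,\pm 2iT$ will naturally group so that the $T$-dependence appears only through the factor $1+e(iT)+e(-iT)$, while the pure $s$-dependence collapses to $e(\pm 3s/4)+e(\mp s/4)(1+e(iT)+e(-iT))$ after combining like terms and using elementary identities such as $e(a/4)+e(-3a/4)=e(-a/4)(e(a/2)+e(-a/2))$ type relations; the $\pm i^{-3}=\pm i$ sign interacts with the $e(\cdot/4)$ phases to flip $e(3s/4)\leftrightarrow e(-3s/4)$ and $e(s/4)\leftrightarrow e(-s/4)$ appropriately, which is exactly the source of the $\pm$ in the claimed formula. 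At this point one reads off the asserted identity.

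The main obstacle is purely bookkeeping: there are three gamma quotients, each generating a reflection factor, a duplication factor, and a recursion factor, and one must track the accumulated powers of $2$, of $\pi$, of $i$, and the precise combination of exponential phases without sign or shift errors — in particular being careful that the shift $s\mapsto s+1$ in the second ratio is absorbed correctly so that the gamma product genuinely coincides with that of the first ratio. Since \emph{Blomer} already recorded this (with a typo, as noted), I would present the computation as a sequence of applications of Euler's reflection formula, the Legendre duplication formula, and the functional equation $\Gamma(z+1)=z\Gamma(z)$, and verify the final exponential bracket by expanding both sides; no genuinely new idea is required beyond the initial identity \eqref{eqn:gamma3}.
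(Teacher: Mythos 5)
Your plan uses the same ingredients as the paper's cited computation (Blomer, p.~1397): start from \eqref{eqn:gamma3} and reduce the gamma quotients by Euler's reflection formula, Legendre's duplication formula, and $\Gamma(z+1)=z\Gamma(z)$. But the specific per-ratio pairing you propose for duplication does not line up. Taking $z=\tfrac{s+\epsilon}{2}$ in the reflection formula produces $\Gamma\bigl(\tfrac{2-s-\epsilon}{2}\bigr)$, while choosing $w=\tfrac{1-s+\epsilon}{2}$ in duplication needs $\Gamma\bigl(\tfrac{2-s+\epsilon}{2}\bigr)$; these differ by the sign of $\epsilon$ and coincide only when $\epsilon=0$. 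The rescue here is that the parameter set $\{0,\pm 2iT\}$ is symmetric under $\epsilon\mapsto -\epsilon$, so in the full product over $\epsilon$ you may replace $\Gamma\bigl(\tfrac{1-s+\epsilon}{2}\bigr)$ by $\Gamma\bigl(\tfrac{1-s-\epsilon}{2}\bigr)$, after which the pair $\Gamma\bigl(\tfrac{1-s-\epsilon}{2}\bigr)\Gamma\bigl(\tfrac{2-s-\epsilon}{2}\bigr)=\Gamma(w)\Gamma(w+\tfrac12)$ with $w=\tfrac{1-s-\epsilon}{2}$ is a genuine duplication pair, giving $2^{s+\epsilon}\sqrt{\pi}\,\Gamma(1-s-\epsilon)$. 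You should make this rearrangement explicit, since as written the step "apply duplication with $w=\tfrac{1-s+\epsilon}{2}$" would fail.

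It is also worth knowing that Blomer's own organization sidesteps the per-ratio reshuffling. He first writes both terms of $G^\pm(s)$ over the single common denominator $\prod_j\Gamma\bigl(\tfrac{1-s-\alpha_j}{2}\bigr)\Gamma\bigl(\tfrac{2-s-\alpha_j}{2}\bigr)$. The two numerator products are then automatically reflection pairs, namely $\Gamma\bigl(\tfrac{s+\alpha_j}{2}\bigr)\Gamma\bigl(\tfrac{2-s-\alpha_j}{2}\bigr)=\pi/\sin\!\bigl(\pi\tfrac{s+\alpha_j}{2}\bigr)$ for the first term and, after noting $\tfrac{1+s+\alpha_j}{2}=1-\tfrac{1-s-\alpha_j}{2}$, a companion that becomes $\pi/\cos\!\bigl(\pi\tfrac{s+\alpha_j}{2}\bigr)$; while the denominator pairs are automatically of duplication form. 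Multiplying through by $\prod_j\sin(\pi(s+\alpha_j))$ and using $\sin(\pi z)=2\sin(\pi z/2)\cos(\pi z/2)$ collapses the trigonometric factors into pure exponentials $e\bigl(\pm\tfrac{s+\alpha_j}{4}\bigr)$, and the bracket $e(\pm 3s/4)+e(\mp s/4)\sum_j e(\mp\alpha_j/2)$ falls out in one step. This is the same method, but it avoids having to argue separately that the two ratios land on the same gamma product after the shift $s\mapsto s+1$, which is exactly the bookkeeping point you flagged as delicate.
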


By Lemma \ref{lemma:VSF}, we get
\begin{align} \label{eqn:R+<<I}
  \mathcal{R}^{+}(N)
   & = \frac{MT}{2N^{1/2}}  \int_{|v|\leq \frac{M^\varepsilon}{M}}
  \sum_{1\leq r\ll \frac{\sqrt{N}}{T^{1-\varepsilon}M} }\frac{1}{r} \sum_{\sigma_1=\pm }
  \nonumber
  \\
  & \hskip 1.5cm \cdot
  \sum_{\pm } \sum_{n\geq1} \frac{A(1,n)}{r} e\left(\pm \frac{n}{r}\right)
  W_{\sigma_1,v}^\pm \left( \frac{n}{r^3} \right)
  e\left(\frac{vT}{\pi}\right)g(Mv)\dd v   + O(T^{-2022}) \nonumber \\
  & = \frac{MT}{2N^{1/2}}
  \sum_{1\leq r\ll \frac{\sqrt{N}}{T^{1-\varepsilon}M} }\frac{1}{r} \sum_{\sigma_1=\pm }
  \sum_{\pm } \sum_{n\geq1} \frac{A(1,n)}{r} e\left(\pm \frac{n}{r}\right)  I_{\sigma_1}^\pm(n,r)
   + O(T^{-2022}) ,
\end{align}
where $ W_{\sigma_1,v}^\pm$ is given by \eqref{eqn:W+-} with $w_{\sigma_1,v}(y)=  V\left(\frac{y}{N} \right)
  e\left(\sigma_1\frac{2\sqrt{y}}{r}\cosh(v)\right) $, and
\[
  I_{\sigma_1}^\pm(n,r) := \int_{|v|\leq \frac{M^\varepsilon}{M}} W_{\sigma_1,v}^\pm \left( \frac{n}{r^3} \right)
  e\left(\frac{vT}{\pi}\right)g(Mv)\dd v .
\]

\subsection{Integral transforms} \label{subsec:IT}

Now we need to estimate $I_{\sigma_1}^\pm(n,r)$. We will first deal with  $W_{\sigma_1,v}^\pm \left( \frac{n}{r^3} \right)$ in the following lemma.
\begin{lemma}\label{lemma:W}
  Let $ W_{b}^\pm$ be given by \eqref{eqn:W+-} with $w_{b}(x)=  V\left(\frac{x}{N} \right)
  e\left(b \sqrt{\frac{x}{N}}\right) $. Assume $b\in\mathbb{R}$  and $|b|\asymp B=\frac{\sqrt{N}}{r} \gg T^{1-\varepsilon} M$.
  \begin{itemize}
     \item [i)]  Assume $0< y\leq T^{2022}$. Then we have  $W_{b}^\pm (y)=O(T^{-A})$ unless $\sgn(b)=\pm$ and $Ny\asymp B^3$, in which case we have
  \[
    W_{b}^\pm (y)
    = N^{1/2}y^{-1/2}
      V_2\left(\xi_0,\frac{\pm b}{B}\right) e^{ih_2(\xi_0)} + O(T^{-A}),
  \]
  for any positive constant $A$ and some 1-inert function $V_2$ depending on $A$. Here
  $\xi_0 = \pm \frac{4 \pi Ny}{b^3} + \sqrt{(\frac{4 \pi Ny}{b^3})^2+4 \frac{T^2}{b^2}}$
  and
  $
    h_2(\xi_0) = -b \xi_0 - 2T \log \frac{|b| \xi_0 \mp 2 T}{|b| \xi_0 \pm 2 T}.
  $
    \item [ii)] For $y>0$, we have
  \[
    W_{b}^\pm (y) \ll y^{-A-1} B^{3A+3} N^{-A},
  \]
  for any positive constant $A$.
  \end{itemize}
\end{lemma}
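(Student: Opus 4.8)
The plan is to analyze the Hankel transform $W_b^\pm(y)$ via the Mellin--Barnes integral \eqref{eqn:W+-} directly, using the explicit sign-sensitive formula for $G^\pm(s)$ from Lemma \ref{lemma:G}. First I would compute the Mellin transform $\tilde{w}_b(s)$ of $w_b(x) = V(x/N) e(b\sqrt{x/N})$. Writing $x = Nu$ we get $\tilde{w}_b(s) = N^s \int_0^\infty V(u) e(b\sqrt{u}) u^{s-1}\dd u$, and since $|b|\asymp B\gg T^{1-\varepsilon}M$ is large, this is an oscillatory integral with phase $2\pi b\sqrt{u}$ having no stationary point on $\supp V\subseteq[1,2]$; repeated integration by parts (Lemma \ref{lemma:repeated_integration_by_parts}) shows $\tilde w_b(s)$ decays rapidly unless $|s|\ll |b|^{1+\varepsilon}$, and for $|s|$ in that range one has $\tilde w_b(s) = N^s b^{-1/2} (\text{1-inert in } s/b) \, e(\text{phase})$ after one application of stationary phase in a rescaled variable — more precisely after substituting appropriately we can extract the $b$-dependence as a power times a slowly varying factor. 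Actually, the cleaner route, matching the statement's shape, is to leave $\tilde w_b(s)$ implicit and combine it with $G^\pm(s)$ first.

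The core step is then to insert Lemma \ref{lemma:G}'s formula for $G^\pm(s)$ into \eqref{eqn:W+-} and apply Stirling's formula \eqref{eqn:Stirling_J} to the product $\Gamma(1-s+2iT)\Gamma(1-s)\Gamma(1-s-2iT)$ on the line $\Re(s) = -3$ (or after shifting). This converts $G^\pm(s) y^{s-1}$ into an expression of the form $(\text{slowly varying})\cdot \exp(i\Psi(s))$ where $\Psi$ collects the logarithmic phases $\pm 2T\log(\text{stuff})$, the $\mp s/4$ and $\pm 3s/4$ terms from the exponentials, and the $-s\log y$ term, with $y \asymp B^3/N$ in the relevant range. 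Combined with the phase of $\tilde w_b(s)$, the total phase in the $s$-integral has a stationary point; I would locate it, verify it lies on the contour after a suitable shift, and check the second-derivative lower bound and the inertness of the amplitude, so that Lemma \ref{lemma:stationary_phase} applies. The key arithmetic observations are: (a) the three exponential terms in $G^\pm(s)$ lead to phases whose stationary points are consistent only when $\sgn(b) = \pm$ (for the dominant $e(\pm 3s/4)$ versus $e(\mp s/4)$ branches this pins down the sign matching in part i)), and (b) the location of the stationary point forces the support condition $Ny\asymp B^3$, outside of which one gets $O(T^{-A})$ by non-stationary phase. The resulting main term is $W_b^\pm(y) = N^{1/2} y^{-1/2} V_2(\xi_0, \pm b/B) e^{i h_2(\xi_0)} + O(T^{-A})$; one then checks by hand that $\xi_0 = \pm\frac{4\pi Ny}{b^3} + \sqrt{(\frac{4\pi Ny}{b^3})^2 + 4\frac{T^2}{b^2}}$ is exactly the stationary point and that $h_2(\xi_0) = -b\xi_0 - 2T\log\frac{|b|\xi_0 \mp 2T}{|b|\xi_0 \pm 2T}$ is the phase at that point, matching e.g. the computations in \cite{li2011bounds} or \cite{LNQ} adapted to the larger conductor here.

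For part ii), the bound $W_b^\pm(y) \ll y^{-A-1} B^{3A+3} N^{-A}$ for all $y>0$ follows by shifting the contour in \eqref{eqn:W+-} far to the left, to $\Re(s) = -A-2$ say: the integrand is holomorphic there (the Gamma factors in $G^\pm(s)$ have poles only at $s = 1 + \text{integer} \mp 2iT$ which lie to the right), $\tilde w_b(s)$ is entire and, on vertical lines, bounded polynomially in $|s|$ times $N^{\Re(s)}$ with rapid decay in $|\Im(s)|$ beyond $|b|^{1+\varepsilon}$; the factor $y^{s-1}$ contributes $y^{-A-3}$, and estimating the Gamma product by Stirling gives the stated power of $B = \sqrt N/r$ (the phase $e(\pm iT)$-type terms are bounded) — the bookkeeping of exponents produces $y^{-A-1} B^{3A+3} N^{-A}$ after relabeling $A$.

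The main obstacle I anticipate is the stationary phase analysis of the $s$-integral in part i): because the spectral parameters of $\Sym^2\phi$ are $0, \pm 2iT$ rather than fixed, the phase $\Psi(s)$ has a $2T$-sized piece, so one must be careful that the relevant range of $|s|$ (controlled by $B = \sqrt N/r \gg T^{1-\varepsilon}M$) is genuinely large compared to the "frequency" $Y/X^2$ appearing in Lemma \ref{lemma:stationary_phase}, and that the amplitude coming from Stirling's expansion $g_{\sigma,J}$ together with the $B^{-1/2}$ from $\tilde w_b$ is genuinely $X$-inert with $X = B^\varepsilon$ (not merely bounded) in both $\xi_0$ and the parameter $\pm b/B$. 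Verifying the sign dichotomy — that the cross terms $e(\mp s/4)(1 + e(iT) + e(-iT))$ never produce a competing stationary point in the forbidden sign regime, even accounting for the $e(\pm iT)$ factors — is the delicate combinatorial heart, and is exactly where Lemma \ref{lemma:G}'s precise form (correcting the typo in \cite{blomer2012subconvexity}) is essential.
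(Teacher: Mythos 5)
Your overall skeleton — Lemma \ref{lemma:G} plus Stirling's formula plus stationary phase for part i), contour shift to $\Re(s)=-A$ for part ii) — is the same as the paper's, and your treatment of ii) is correct. But there are two places in part i) where the proposal misstates the mechanism, and they are the parts you yourself flag as the delicate heart.

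First, your analysis of $\tilde w_b(s)$ is imprecise in a way that matters. After $x = N\xi^2$ the relevant phase is $h_1(\xi) = b\xi + \frac{\tau}{\pi}\log\xi$, and this \emph{does} have a stationary point $\xi_0 = -\tau/(\pi b)\in[1,\sqrt2]$ precisely when $\sgn(\tau) = -\sgn(b)$ and $|\tau|\asymp|b|$. Your claim that the phase ``$2\pi b\sqrt u$ has no stationary point on $\supp V$'' ignores the $u^{i\tau}$ oscillation that balances $b\sqrt u$; and the localization you state, $|s|\ll |b|^{1+\varepsilon}$, is one-sided and too weak — the paper localizes $\tilde w_b$ to a dyadic range $|\tau|\asymp|b|$ with a \emph{fixed sign} $\sgn(\tau)=-\sgn(b)$, and the integral is negligible for $|\tau|\ll|b|$ as well as $|\tau|\gg|b|$. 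That sign constraint on $\tau$ is half of what forces $\sgn(b)=\pm$, and you need it explicitly. Second, and more seriously, you attribute the other half of the sign dichotomy to ``competing stationary points,'' but that is not the mechanism in the paper. After applying Stirling's formula \eqref{eqn:Stirling_J} to $\Gamma(1-s+2iT)\Gamma(1-s)\Gamma(1-s-2iT)$ in Lemma \ref{lemma:G}, the factor multiplying the phase is $\exp(\mp\tfrac{3\pi\tau}{2} - \tfrac{3\pi|\tau|}{2})$ (see \eqref{eqn:G==}): this is genuine exponential \emph{amplitude decay}, equal to $1$ when $\sgn(\tau)=\mp$ and $e^{-3\pi|\tau|}$ when $\sgn(\tau)=\pm$. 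So the paper discards the $\sgn(\tau)=\pm$ branch because $G^\pm(1/2+i\tau)\ll e^{-\pi|\tau|/2}$ there, not because a stationary point moves off the contour. Combining this amplitude condition ($\sgn(\tau)=\mp$) with the stationary-point condition from $\tilde w_b$ ($\sgn(\tau)=-\sgn(b)$) gives $\sgn(b)=\pm$, and only then does the paper substitute $\tau=-b\xi$ (so $\xi>0$) and apply stationary phase in $\xi$, producing $\xi_0$, the constraint $Ny\asymp B^3$, and $h_2(\xi_0)$. Framing the sign dichotomy as a stationary-point issue would lead you to try to run an oscillatory-integral argument on a piece that is actually exponentially small in the amplitude; without noticing the decay, the bookkeeping of the three exponential branches $e(\pm 3s/4)$, $e(\mp s/4)(1+e(iT)+e(-iT))$ will not close cleanly, especially because $e(-iT)=e^{2\pi T}$ is exponentially large and must be beaten by the $\Gamma$-factor decay in the $|\tau|\gtrsim T$ range where $\tilde w_b$ is supported.
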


\begin{proof}
  By \eqref{eqn:W+-} and moving the line of integration to $\Re(s)=1/2$, we have
  \begin{equation*}
    W_b^\pm (y) = \frac{1}{2\pi} \int_{\mathbb{R}} G^\pm(1/2+i\tau) \tilde{w}_{b}(1/2+i\tau) y^{-1/2+i\tau} \dd \tau.
  \end{equation*}
  We first deal with $\tilde{w}_{b}$. Since $w_{b}(x)=  V\left(\frac{x}{N} \right)
  e\left(b \sqrt{\frac{x}{N}}\right) $, we have
  \[
    \tilde{w}_{b}(1/2+i\tau) = \int_{0}^{\infty} V\left(\frac{x}{N} \right)
  e\left(b\sqrt{\frac{x}{N}}\right)  x^{1/2+i\tau-1} \dd x.
  \]
  Making a change of variable $x=N\xi^2$, we get
  \[
    \tilde{w}_{b}(1/2+i\tau)
    = N^{1/2+i\tau} \int_{0}^{\infty} 2 V\left(\xi^2\right)
    e\left(b\xi + \frac{\tau}{\pi} \log \xi\right)  \dd \xi.
  \]
  Let
  \[
    h_1(\xi) = b\xi + \frac{\tau}{\pi} \log \xi.
  \]
  Then we have
  \[
    h_1'(\xi) = b + \frac{\tau}{\pi  \xi} ,
  \]
  and
  \[
    h_1''(\xi)= -\frac{\tau}{\pi \xi^2}, \quad
    h_1^{(j)}(\xi)\asymp_j |\tau|, \quad j\geq2.
  \]
  If $\sgn(\tau)=\sgn(b)$ or $|\tau|\geq |b|/2023$ or $|\tau|\leq 2023|b|$, then we have $|h_1'(\xi)| \gg |b|+|\tau|$. By Lemma \ref{lemma:repeated_integration_by_parts},
  % with $X=V=Q=1, R=|b|+|\tau|, Y=|\tau|$
  we have
  \[
    \tilde{w}_{b}(1/2+i\tau) \ll_A (T(1+|\tau|))^{-A},
  \]
  for any $A>0$.
  Now assume   $\sgn(\tau)=-\sgn(b)$ and $|\tau|\asymp |b|$.
  The solution of $h_1'(\xi)=0$ is
  \[
    \xi_0 = -\frac{\tau }{\pi b} .
  \]
  We have  $h_1(\xi_0)=\frac{\tau}{\pi} \log  \frac{-\tau  }{ \pi e b }$.
  Hence by Lemma \ref{lemma:stationary_phase},
  % with $t_1=\xi, t_2=\tau, t_3=b$, $X_1=1, X_2=X_3=B$, $X=1, Y=R=B$
  we have
  \begin{equation*}
     \tilde{w}_{b}(1/2+i\tau)
     = \frac{ N^{1/2+i\tau}}{\sqrt{|\tau|}}
     e\left( \frac{\tau}{\pi} \log  \frac{-\tau  }{ \pi e b } \right)
     V_1\left( \frac{\tau}{-b} , \frac{|b|}{B} \right) \\
      + O((|\tau|+T)^{-A}),
  \end{equation*}
  for any positive $A$ and some 1-inert function $V_1$ which depends on $A$ with compact support.
  Note that we have
  \[|\tau| \asymp B \gg T^{1-\varepsilon} M.\]

%  Recall that
%  \begin{equation*}
%    W^\pm (y) = \frac{1}{2\pi } \int_{\mathbb{R}} G^\pm(1/2+i\tau) \tilde{w}_{\sigma_1,v}(1/2+i\tau) y^{-1/2+i\tau} \dd \tau.
%  \end{equation*}
%  We have
%  \[
%    G^\pm(1/2+i\tau) = \frac{L_\infty(1/2-i\tau,\Sym^2\phi)}{L_\infty(1/2+i\tau,\Sym^2\phi)} \pm i^3 \frac{L_\infty(3/2-i\tau,\Sym^2\phi)}{L_\infty(3/2+i\tau,\Sym^2\phi)}.
%  \]
%  \[
%    G^\pm(1/2+i\tau) = \frac{ \pi^{3i\tau} \Gamma\left(\frac{1/2-i\tau-2iT}{2}\right) \Gamma\left(\frac{1/2-i\tau}{2}\right)\Gamma\left(\frac{1/2-i\tau+2iT}{2}\right)}
%    {\Gamma\left(\frac{1/2+i\tau-2iT}{2}\right) \Gamma\left(\frac{1/2+i\tau}{2}\right)\Gamma\left(\frac{1/2+i\tau+2iT}{2}\right)} \pm ...
%  \]
%  For fixed $\sigma\in\mathbb{R}$, real $|t|\geq10$ and any $J>0$, we have Stirling's formula
%\begin{equation*}%\label{eqn:Stirling}
%  \Gamma(\sigma+it) = e^{-\frac{\pi}{2}|t|} |t|^{\sigma-\frac{1}{2}} \exp\left( it\log\frac{|t|}{e} \right) \left( g_{\sigma,J}(t) + O_{\sigma,J}(|t|^{-J}) \right),
%\end{equation*}
%where
%\[
%  t^j \frac{\partial^j}{\partial t^j} g_{\sigma,J}(t) \ll_{j,\sigma,J} 1
%\]
%for all fixed $j\in \mathbb{N}_0$.
%  Hence we have
%  \begin{multline*}
%    G^\pm(1/2+i\tau) = \pi^{3i\tau}  \exp\left( -i(\tau+2T)\log \frac{|\tau+2T|}{2e} - i\tau \log \frac{|\tau|}{2e} -i(\tau-2T)\log \frac{|\tau-2T|}{2e} \right) \\
%    \cdot
%    \left( w_{\sigma_1, J}(|\tau|) + O_{J}(|\tau|^{-J}) \right) .
%  \end{multline*}

  By Lemma \ref{lemma:G}, we have
  \begin{multline*}
    G^\pm(1/2+i\tau) =  \frac{ \pi^{-3/2+3i\tau}  i}{   2^{ 1/2 - 3i\tau }   }
   \Gamma( 1/2-i\tau+2iT )\Gamma(1/2-i\tau)\Gamma(1/2-i\tau-2iT ) \\
   \cdot \left(   e\Big(\frac{\pm 3/2 \pm 3i\tau}{4}\Big)
  + e\Big(\frac{\mp 1/2\mp i\tau}{4}\Big) \Big(1+e(iT)+e(-iT)\Big) \right).
  \end{multline*}
  By Stirling's formula \eqref{eqn:Stirling_J}, we have
  \begin{multline}\label{eqn:G==}
    G^\pm(1/2+i\tau) =
    \exp\bigg( -i(\tau+2T)\log \frac{|\tau+2T|}{e} -i(\tau-2T)\log \frac{|\tau-2T|}{e} \\
    - i\tau \log \frac{|\tau|}{(2\pi)^{3} e} \bigg)
    \exp\left(\mp \frac{3\pi \tau}{2} -\frac{3\pi |\tau|}{2}\right)
    \left( w_{J}(\tau) + O_{J}(|\tau|^{-J}) \right)
    + O\left( \exp\Big(-\frac{\pi |\tau|}{2} \Big) \right) ,
  \end{multline}
  for some large $J>0$ and some weight function $w_J$ satisfying that  $\tau^j \frac{\partial^j}{\partial \tau^j} w_{J}(\tau) \ll_{j,J} 1$.
  If $\sgn(\tau)=\pm$, then we have $G^\pm(1/2+i\tau) \ll \exp\big(-\frac{\pi |\tau|}{2} \big) $ and the contribution to $W_b^\pm (y)$ is
  $O(T^{-A})$ for any $A>0$.

  If $\sgn(\tau)=\mp=-\sgn(b)$, then we get
  \begin{multline*}
    W_b^\pm (y) = \frac{1}{2\pi } \int_{\mathbb{R}} \pi^{3i\tau}  \exp\left( -i(\tau+2T)\log \frac{|\tau+2T|}{2e} - i\tau \log \frac{|\tau|}{2e} -i(\tau-2T)\log \frac{|\tau-2T|}{2e} \right) \\
    \cdot
    \left( w_{J}(\tau) + O_{J}(|\tau|^{-J}) \right)
    \frac{ N^{1/2+i\tau}}{\sqrt{|\tau|}}
    e\left( \frac{\tau}{\pi} \log  \frac{-\tau  }{ \pi e b } \right)
     V_1\left( \frac{\tau}{-b} , \frac{\pm b}{B} \right)
     y^{-1/2+i\tau} \dd \tau
     + O(T^{-A})\\
      = N^{1/2}y^{-1/2}
      \frac{1}{2\pi }
      \int_{\mathbb{R}}   w_{J}(\tau)
    \frac{ 1}{\sqrt{|\tau|}}
     V_1\left( \frac{\tau}{-b} , \frac{\pm b}{B} \right)
     (\pi^{3}Ny)^{i\tau}
    e\left( \frac{\tau}{\pi} \log  \frac{-\tau}{\pi e b} \right) \\
    \cdot \exp\left( -i(\tau+2T)\log \frac{|\tau+2T|}{2e} - i\tau \log \frac{|\tau|}{2e} -i(\tau-2T)\log \frac{|\tau-2T|}{2e} \right)
      \dd \tau + O(T^{-A}).
  \end{multline*}
  Making a change of variable $\tau=-b \xi=\mp |b|\xi$, we get
  \begin{multline*}
    W_b^\pm (y)
      =   N^{1/2}y^{-1/2}
      \frac{\mp \sqrt{|b|}}{2\pi }
      \int_{\mathbb{R}}   w_{J}(-b\xi)
    \frac{ 1}{\sqrt{\xi}} V_1\left( \xi,\frac{|b|}{B} \right)
    \\
    \cdot
    \exp\Big( i(b \xi-2T)\log \frac{|b \xi-2T|}{2e}
    + i b \xi \log \frac{|b|\xi}{2e}
    + i(b \xi+2T)\log \frac{| b \xi+2 T|}{2e} \\
    -ib\xi\log (\pi^{3}Ny)
     -  i 2b \xi \log  \frac{\xi}{ \pi e } \Big)
      \dd \xi + O(T^{-A}).
  \end{multline*}
  Let
  \begin{multline*}
    h_2(\xi) = (b \xi-2T)\log \frac{|b| \xi \pm 2T}{2e}
    + b \xi \log \frac{|b|\xi}{2e}
    + (b\xi+2T)\log \frac{ |b| \xi \mp 2 T}{2e}
     \\
    - b \xi\log (\pi^{3}Ny)
    - 2 b \xi \log  \frac{\xi}{ \pi e } \\
    = b \xi \log \frac{ (|b| \xi \pm 2T) |b|\xi ( |b| \xi \mp 2 T) }
    {8 \pi e Ny \xi^2 }
    - 2T \log \frac{|b| \xi \mp 2 T}{|b| \xi \pm 2 T}.
  \end{multline*}
  Then we have
  \begin{multline*}
    h_2'(\xi)= b \log \frac{|b| \xi \pm 2T}{2}
    + b \log \frac{ |b|\xi}{2}
    + b \log \frac{ |b| \xi \mp 2 T}{2}
    - b \log (\pi^{3}Ny)
    - 2b \log \frac{ \xi}{\pi}
    \\
    = b \log \frac{ |b| (|b| \xi \pm 2T)  (|b| \xi \mp 2T) }{ 8 \pi Ny  \xi }.
  \end{multline*}
  and
  \[
    h_2''(\xi) = b \Big( \frac{1}{ \xi-2 T/b} + \frac{1}{ \xi+2T/b} + \frac{1}{\xi}\Big),
    \quad
    h_2^{(j)}(\xi)\asymp_j B, \quad j\geq2.
  \]
  If $Ny \leq B^3/2023$ or $Ny\geq 2023 B^3$, then we have
  $h_2'(\xi) \gg B$. By Lemma \ref{lemma:repeated_integration_by_parts},
  % with $X=V=Q=1, R=B, Y=B$
  we have
  \[
    W_b^\pm(y) \ll_A T^{-A},
  \]
  for any $A>0$.

  Assume $Ny\asymp B^3$.
  The solution of $h_2'(\xi)=0$, i.e., $\xi^2-\frac{8\pi Ny}{|b|^3}\xi-4T^2/b^2=0$, is
  \begin{equation}\label{eqn:xi0}
    \xi_0 = \pm \frac{4 \pi Ny}{b^3} + \sqrt{\Big(\frac{4 \pi Ny}{b^3}\Big)^2+4 \frac{T^2}{b^2}}.
  \end{equation}
  Note that
  \begin{equation}\label{eqn:h(xi0)}
    h_2(\xi_0) = -b \xi_0 - 2T \log \frac{|b| \xi_0 \mp 2 T}{|b| \xi_0 \pm 2 T}.
  \end{equation}
  Hence by Lemma \ref{lemma:stationary_phase},
  %  with $t_1=\xi,  t_2=b$, $X_1=1, X_2=B$, $X=1, Y=R=B$
  we have
  \begin{equation*}
    W_b^\pm (y)
      = N^{1/2}y^{-1/2}
      V_2\left(\xi_0,\frac{\pm b}{B}\right) e^{ih_2(\xi_0)} + O(T^{-A}),
  \end{equation*}
  for any positive $A$ and some 1-inert function $V_2$ which depends on $A$.

  Now we prove ii). By \eqref{eqn:W+-} and moving the line of integration to $\Re(s)=-A$, we have
  \begin{equation*}
    W_b^\pm (y) = \frac{1}{2\pi} \int_{\mathbb{R}} G^\pm(-A+i\tau) \tilde{w}_{b}(-A+i\tau) y^{-A-1+i\tau} \dd \tau.
  \end{equation*}
  Note that
  \begin{align*}
    \tilde{w}_{b}(-A+i\tau) & = \int_{0}^{\infty} V\left(\frac{x}{N} \right)
  e\left(b \sqrt{\frac{x}{N}}\right) x^{-A+i\tau-1} \dd x \\
  & \leq N^{-A}  \Big| \int_{0}^{\infty} V\left(\xi \right)
  e\left(b \sqrt{\xi}\right)  \xi^{-A+i\tau-1} \dd \xi \Big|.
  \end{align*}
  By the same argument as in the proof of i), we have
  \[
    \tilde{w}_{b}(-A+i\tau) \ll N^{-A} \frac{1}{\sqrt{B}} V(\frac{\tau}{B}) + N^{-A} (|\tau|+T)^{-5A}.
  \]
  By Stirling's formula we get
  \begin{equation*}
    W_b^\pm (y) \ll y^{-A-1} B^{3A+3} N^{-A}.
  \end{equation*}
  This completes the proof.
\end{proof}

\begin{lemma} \label{lemma:I}
  We have $I_{\sigma_1}^\pm(n,r) = O(n^{-5} T^{-A})$ unless $n\asymp N^{1/2}$ and $\pm = \sigma_1$, in which case we have
  \[
    I_{\sigma_1}^\pm(n,r) = r^2
    e\left( \frac{\mp   n}{ r }
      \mp \frac{37  T^2 r  }{12 \pi^2  n}
      \right)
      V_r\left( \frac{n}{\sqrt{N}}\right) + O\left( r^{3/2} M^{-2+\varepsilon} \right),
  \]
  where $V_r$ is certain $(1+ \frac{N^{1/2+\varepsilon}}{rM^4})$ inert function depending on $r$ with compact support.
\end{lemma}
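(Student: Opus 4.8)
The plan is to substitute the two-term description of $W_{\sigma_1,v}^\pm(n/r^3)$ from Lemma \ref{lemma:W} into the definition
\[
  I_{\sigma_1}^\pm(n,r) = \int_{|v|\leq M^\varepsilon/M} W_{\sigma_1,v}^\pm\!\left(\frac{n}{r^3}\right) e\!\left(\frac{vT}{\pi}\right) g(Mv)\,\dd v,
\]
and then analyze the resulting $v$-integral by the stationary phase method. First I would apply Lemma \ref{lemma:W}(ii) with a large $A$: since $B = \sqrt{N}/r$ and the weight $V$ forces $m^2 n \asymp N$, the decay $y^{-A-1}B^{3A+3}N^{-A}$ shows that the whole contribution is negligible (i.e. $O(n^{-5}T^{-A})$) unless $n \asymp N^{1/2}$, and moreover Lemma \ref{lemma:W}(i) forces $\sgn(b) = \pm$, where $b = 2\sigma_1\cosh(v)\sqrt{y}$ with $y = n/N$; since $\cosh v > 0$ and $\sqrt{y}>0$, this pins down $\pm = \sigma_1$. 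On this range, Lemma \ref{lemma:W}(i) gives
\[
  W_{\sigma_1,v}^\pm\!\left(\frac{n}{r^3}\right) = N^{1/2}\left(\frac{n}{r^3}\right)^{-1/2} V_2\!\left(\xi_0(v),\frac{\pm b(v)}{B}\right) e^{i h_2(\xi_0(v))} + O(T^{-A}),
\]
with $b(v) = 2\sigma_1\cosh(v)\sqrt{n/N}$ (so $|b(v)| \asymp B = \sqrt{N}/r$), $\xi_0(v)$ as in \eqref{eqn:xi0} and $h_2(\xi_0)$ as in \eqref{eqn:h(xi0)}, the parameter there being $Ny = n/r^3 \cdot N \asymp B^3$.

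The heart of the matter is the $v$-integral
\[
  \int_{|v|\leq M^\varepsilon/M} \left(\frac{N}{n/r^3}\right)^{1/2} V_2\!\left(\xi_0(v),\tfrac{\pm b(v)}{B}\right) e\!\left(\frac{h_2(\xi_0(v))}{2\pi} + \frac{vT}{\pi}\right) g(Mv)\,\dd v.
\]
Using \eqref{eqn:h(xi0)}, $h_2(\xi_0) = -b\xi_0 - 2T\log\frac{|b|\xi_0 \mp 2T}{|b|\xi_0 \pm 2T}$, and by the defining relation $\xi_0^2 - \frac{8\pi Ny}{|b|^3}\xi_0 - 4T^2/b^2 = 0$ one has $|b|\xi_0 = \frac{4\pi Ny}{b^2} + \sqrt{(\tfrac{4\pi Ny}{b^2})^2 + 4T^2}$; since $Ny = nN/r^3$ and $|b|^2 = 4\cosh^2(v)\,n/N$, we get $\frac{4\pi Ny}{b^2} = \frac{\pi n}{r^3\cosh^2 v}\cdot\frac{N^2}{n} $... more cleanly $\frac{4\pi Ny}{b^2} = \pi N/(r^3\cosh^2 v)\cdot \dots$; in any case $\frac{4\pi Ny}{b^2}$ is of size $\sqrt{N}/r \gg T^{1-\varepsilon}M \gg T$, so it dominates $2T$, and one may Taylor-expand the logarithm and the square root in powers of $T^2 r^2/N$. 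This is exactly the conductor-dropping mechanism: the phase, written in $v$, becomes $-b(v)\xi_0(v)/2\pi + vT/\pi$ plus lower-order terms, and the linear-in-$v$ and the dominant pieces should combine so that the derivative in $v$ of the phase has size $\ll T/M$ rather than $T$. The main term of $b(v)\xi_0(v)$ is, by the same computation as in \eqref{eqn:xi0}–\eqref{eqn:h(xi0)}, essentially $b^2\xi_0/|b| \approx \frac{8\pi Ny}{b^2} + (\text{size }T^2 r/\sqrt N)$, and crucially the piece of $h_2(\xi_0)/2\pi$ that is linear in $\cosh v$ is what produces, after stationary phase in $v$, the factor $e(\mp n/r)$: one expands $\cosh(v) = 1 + v^2/2 + \cdots$ and checks the $v$-independent term contributes the phase $\mp n/r$ (the "$e(\mp n/r)$" Conrey–Iwaniec phenomenon referenced in the introduction), while the next correction gives the $\mp \frac{37 T^2 r}{12\pi^2 n}$ term — the constant $37/12$ coming from combining $P_2$-type Taylor coefficients (recall $P_2(s)$ in the approximate functional equation lemma) with the expansion of $h_2$.

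To execute the stationary phase in $v$: one writes the phase as $\Phi(v) = \frac{h_2(\xi_0(v))}{2\pi} + \frac{vT}{\pi}$, differentiates, and locates the critical point $v_0$; because $g(Mv)$ and $V_2$ confine $v$ to an interval of length $M^\varepsilon/M$ and the oscillation inside that window is governed by $\Phi''(v) \asymp T/M^2 \cdot M^2 = \dots$ — more precisely one checks $\Phi''$ is of size matching the inert parameter $(1 + N^{1/2+\varepsilon}/(rM^4))$ claimed for $V_r$ — Lemma \ref{lemma:stationary_phase} (or Lemma \ref{lemma:repeated_integration_by_parts} in the non-stationary regime) applies. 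The output of the $v$-integration is then $\frac{M^{-1}}{\sqrt{\text{(second-derivative size)}}}$ times $e(\Phi(v_0))$ times an inert function, and a short bookkeeping of the prefactor $N^{1/2}(n/r^3)^{-1/2} = N^{1/2} r^{3/2}/n^{1/2}$ against the $n \asymp N^{1/2}$ localization yields the claimed $r^2$ in front of the main term and the $O(r^{3/2}M^{-2+\varepsilon})$ error; the remaining phase $e(\Phi(v_0))$ is identified with $e\!\left(\frac{\mp n}{r} \mp \frac{37 T^2 r}{12\pi^2 n}\right)$ up to terms negligible for the stated ranges of $N$, $M$, $r$.

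\textbf{Main obstacle.} The principal difficulty is the explicit bookkeeping of the phase: tracking $h_2(\xi_0(v))$ through the substitution $\xi_0 = \pm\frac{4\pi Ny}{b^3} + \sqrt{(\tfrac{4\pi Ny}{b^3})^2 + 4T^2/b^2}$, Taylor-expanding both the square root and the logarithms in the small parameter $T^2 r^2/N$ and in $v$ around $v=0$, and verifying that (a) the $v$-independent part of the phase is exactly $\mp n/r$ so that it cancels the additive character $e(\pm n/r)$ in \eqref{eqn:R+<<I}, and (b) the first nontrivial correction is $\mp \frac{37 T^2 r}{12\pi^2 n}$ with precisely that rational constant; simultaneously one must confirm that the effective phase in $v$ has derivatives of the reduced size $\ll T/M$ (the assertion around \eqref{eqn:R+<<sum}) so that the stationary-phase gain is genuine. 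Everything else — the negligibility outside $n\asymp N^{1/2}$, the sign constraint $\pm=\sigma_1$, and the inertness of $V_r$ — follows routinely from Lemmas \ref{lemma:W}, \ref{lemma:repeated_integration_by_parts}, and \ref{lemma:stationary_phase}.
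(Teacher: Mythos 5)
Your overall plan follows the paper's approach faithfully: use Lemma \ref{lemma:W}(ii) for large $n$, Lemma \ref{lemma:W}(i) to localize $n\asymp N^{1/2}$ and pin down $\pm=\sigma_1$, then analyze the resulting $v$-integral by Taylor expansion of $h_2(\xi_0(v))$ in $v$ and a small parameter, followed by stationary phase. The paper does exactly this, with one further step you omit (a smooth dyadic partition of unity in $\Xi$ after the substitution $v=\xi/M$, needed to cleanly apply Lemma \ref{lemma:stationary_phase} and to control the inert parameter of $V_r$) and the bookkeeping of the prefactor $r^2$ works out as you sketch.

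However, there is a genuine error in your explanation of where the constant $37/12$ comes from. It has nothing whatsoever to do with the polynomials $P_2(s)$ appearing in the approximate functional equation lemma; those live in a completely different part of the argument. In the paper's computation, one first evaluates the $v$-independent part of $h_2(\xi_0(v))$: using the quadratic relation for $\xi_0$ and $b=\pm 2\frac{\sqrt N}{r}\cosh v$, one gets the constant term
\[
\frac{\mp 2\pi n}{r}\;\mp\;\frac{6T^2r}{\pi n},
\]
which in the $e(\cdot)$ normalization contributes $e\!\left(\frac{\mp n}{r}\mp\frac{3T^2 r}{\pi^2 n}\right)$. The remaining quadratic phase in $\xi=Mv$, namely $e\!\left(\pm\frac{3n}{rM^2}\xi^2+\frac{T}{\pi M}\xi\right)$, has a stationary point at $\xi_0=\frac{rTM}{6\pi n\Xi}$, and evaluating $h_3$ there gives exactly $\mp\frac{rT^2}{12\pi^2 n}$. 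Combining, $3+\frac{1}{12}=\frac{37}{12}$. So the fraction $37/12$ is the sum of a Taylor coefficient from $h_2(\xi_0)$ and a stationary-phase value; it is not produced by the Stirling corrections $P_2(s)$. Similarly, the phase $e(\mp n/r)$ comes directly from the $v$-independent part of $h_2(\xi_0(v))$ after the expansion (not, as you put it, from ``the piece linear in $\cosh v$'' followed by stationary phase). A related imprecision: your claim that ``the derivative in $v$ of the phase has size $\ll T/M$'' is not the right invariant; before the change of variable $v=\xi/M$ the $v$-derivative is $\asymp T$, and the relevant observation, made explicit only after the substitution and after evaluating the stationary point, is that the resulting phase $\frac{T^2r}{n}$ in the $n$-sum (see \eqref{eqn:R+<<sum}) has size $\ll T/M$. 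Finally, the inertness parameter $1+\frac{N^{1/2+\varepsilon}}{rM^4}$ of $V_r$ is traced from the higher-order Taylor coefficients $c_j', c_j''$ in the expansion of $h_2(\xi_0(v))$, not from the size of $\Phi''$. None of these points changes the strategy, but the $P_2$ attribution is a misconception that should be corrected before executing the proof.
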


\begin{proof}
  If $n\geq T^{100}$, then by Lemma \ref{lemma:W} (ii) with $b=\sigma_1\frac{2\sqrt{N}}{r}\cosh(v)$, we have
  \[
    W_{\sigma_1,v}^\pm \left( \frac{n}{r^3} \right) \ll
    n^{-A-1} N^{A/2+3/2}  \ll n^{-5} T^{-A},
  \]
  for any large $A>10$. Here we have used the fact $N\ll T^{3+\varepsilon}$.

  If $n\leq T^{100}$, then by Lemma \ref{lemma:W} (i) with $b=\sigma_1\frac{2\sqrt{N}}{r}\cosh(v)$ we have
  $I_{\sigma_1}^\pm(n,r) = O(n^{-5} T^{-A})$
  unless $n\asymp N^{1/2}$ and $\pm = \sigma_1$, in which case we have
  \[
    I_{\sigma_1}^\pm(n,r) = N^{1/2} \Big(\frac{n}{r^3}\Big)^{-1/2} \int_{|v|\leq \frac{M^\varepsilon}{M}} V(\xi_0,\cosh v) e^{ih_2(\xi_0)}
    e\left(\frac{vT}{\pi}\right)g(Mv)\dd v + O(T^{-A}),
  \]
  where
  \[
    \xi_0 = \xi_0(v) =  \frac{ \pi  n}{2 N^{1/2} \cosh^3(v)} + \sqrt{\Big(\frac{ \pi  n}{2 N^{1/2} \cosh^3(v)}\Big)^2 + \frac{T^2 r^2}{N \cosh^2(v)}}
  \]
  and
  \[
    h_2(\xi_0) = -b \xi_0 - 2T \log \frac{1 \mp 2 T/(|b| \xi_0)}{1 \pm 2 T/(|b| \xi_0)}.
  \]
  By  $ \xi_0^2 - \frac{8\pi Ny}{|b|^3} \xi_0 - \frac{4 T^2}{b^2} = 0 $ and
   $b=\pm 2\frac{\sqrt{N}}{r}\cosh(v)$  we get
   \[
     -\frac{8T^2}{b\xi_0} =  -2b \xi_0  \pm \frac{16 \pi N n}{r^3 b^2} .
   \]
   Hence
  \begin{align*}
    h_2(\xi_0) & = -b \xi_0 - 2T \log \frac{1 + 2 T/(b \xi_0)}{1 - 2 T/( b \xi_0)} \\
    & =-b \xi_0 - \frac{8T^2}{b\xi_0} - 2T \sum_{2\leq j\leq J} \frac{2}{2j-1} \Big(\frac{2T}{b\xi_0}\Big)^{2j-1} + O\left( \frac{T^{1+2J\varepsilon}}{M^{2J-1}}\right) \\
    & = \mp 6\frac{\sqrt{N}}{r}\cosh(v) \xi_0 \pm  \frac{4 \pi  n}{r  \cosh^2(v)}
    - 2T \sum_{2\leq j\leq J} \frac{2}{2j-1} \Big(\frac{2T}{b\xi_0}\Big)^{2j-1} + O\left( \frac{T^{1+2J\varepsilon}}{M^{2J-1}}\right).
  \end{align*}
%  \begin{align*}
%     \xi_0  &  =  \frac{ \pi  n}{2 N^{1/2} \cosh^3(v)}
%    + \frac{ \pi  n}{2 N^{1/2} \cosh^3(v)}
%    \sqrt{1 +   \frac{4 T^2 r^2 \cosh^4(v)}{ \pi^2  n^2}  } \\
%    & = \frac{ \pi  n}{N^{1/2} \cosh^3(v)}
%    +  \frac{T^2 r^2 \cosh(v)}{\pi N^{1/2} n}
%    + \frac{  n}{ N^{1/2} \cosh^3(v)} \sum_{2\leq j\leq J} c_j \left( \frac{Tr\cosh^2(v)}{n} \right)^{2j} + O\left( \frac{T^{2J\varepsilon}}{M^{2J}} \right) \\
%    & = \frac{ \pi  n}{N^{1/2}}
%     +  \frac{T^2 r^2 }{\pi N^{1/2} n}
%     - \frac{3v^2}{2} \frac{ \pi  n}{N^{1/2}}
%     +\frac{n}{N^{1/2}} \sum_{2\leq j\leq J} c_j' v^{2j}
%     +  \frac{T^2 r^2  }{ N^{1/2} n} \sum_{1\leq j\leq J} c_j'' v^{2j}
%     \\
%    & \hskip 3cm  + \frac{  n}{ N^{1/2} \cosh^3(v)} \sum_{2\leq j\leq J} c_j \left( \frac{Tr\cosh^2(v)}{n} \right)^{2j} + O\left( \frac{T^{2J\varepsilon}}{M^{2J}} \right)
%   \end{align*}
  Note that
   \begin{align*}
    \mp 6\frac{\sqrt{N}}{r}\cosh(v)  \xi_0 & \pm  \frac{4 \pi  n}{r  \cosh^2(v)}
    = \frac{ \pm  \pi  n}{ r\cosh^2(v)}
     \mp  \frac{ 3 \pi  n}{ r\cosh^2(v)}
    \sqrt{1 +   \frac{4 T^2 r^2 \cosh^4(v)}{ \pi^2  n^2}  } \\
    & = \frac{\mp 2 \pi  n}{ r\cosh^2(v)}
     \mp \frac{6  T^2 r  \cosh^2(v)}{\pi  n} \\
     & \hskip 2cm
     \mp   \frac{  n}{ r \cosh^2(v)} \sum_{2\leq j\leq J} c_j \left( \frac{Tr\cosh^2(v)}{n} \right)^{2j}
    + O\left( \frac{\sqrt{N}}{r} \frac{T^{2J\varepsilon}}{M^{2J}} \right) \\
    & =\frac{\mp 2 \pi  n}{ r }
      \mp \frac{6  T^2 r  }{\pi  n}
     \pm \frac{6  \pi  n}{ r } v^2
     +\frac{n}{r} \sum_{2\leq j\leq J} c_j' v^{2j}
     +  \frac{T^2 r }{ n} \sum_{1\leq j\leq J} c_j'' v^{2j}
     \\
    & \hskip 2cm  \mp   \frac{  n}{ r \cosh^2(v)} \sum_{2\leq j\leq J} c_j \left( \frac{Tr\cosh^2(v)}{n} \right)^{2j}
    + O\left( \frac{\sqrt{N}}{r} \frac{T^{2J\varepsilon}}{M^{2J}} \right) .
   \end{align*}
  By making a change of variable $v=\xi/M$, we get
  \begin{multline*}
    I_{\sigma_1}^\pm(n,r) = \frac{N^{1/2}}{M} \Big(\frac{n}{r^3}\Big)^{-1/2}
    e\left( \frac{\mp   n}{ r }
      \mp \frac{3  T^2 r  }{\pi^2  n}
      \right)
      \\ \cdot
    \int_{|\xi|\leq M^\varepsilon } g(\xi) V_3\left( \xi ,\frac{n}{\sqrt{N}}\right)
    e\left(\pm \frac{3  n}{ r M^2} \xi^2 +\frac{T}{\pi M} \xi\right) \dd \xi + O(T^{-A}),
  \end{multline*}
  where $V_3$ is a certain $N^{1/2+\varepsilon}/(rM^{4})$ inert function.
  Note that $n\asymp \sqrt{N}$ and $r\ll \frac{\sqrt{N}}{T^{1-\varepsilon}M}$. We have $\frac{n}{ r M^2} \gg \frac{T^{1-\varepsilon}}{M}$.
  We  apply a smooth partition of unity to get
  \begin{equation*}
    I_{\sigma_1}^\pm(n,r) = \sum_{\substack{M^{-1+\varepsilon} \leq \Xi \leq T^\varepsilon \\ \rm dyadic}}
    I_{\sigma_1}^\pm(n,r;\Xi) + O\left( r^{3/2} M^{-2+\varepsilon} \right),
  \end{equation*}
  where
  \begin{align*}
    I_{\sigma_1}^\pm(n,r;\Xi)
%    = \frac{N^{1/2}}{M} \Big(\frac{n}{r^3}\Big)^{-1/2}
%    e\left( \frac{\mp   n}{ r }
%      \mp \frac{3  T^2 r  }{\pi^2  n}
%      \right)
%      \\ \cdot \sum_{\sigma_2=\pm}
%    \int_{\mathbb{R}} V\left( \frac{\sigma_2 \xi }{\Xi}\right) g(\xi) V_3\left(\xi ,\frac{n}{\sqrt{N}}\right)
%    e\left(\pm \frac{3  n}{ r M^2} \xi^2 +\frac{T}{\pi M} \xi\right) \dd \xi.
    & = \frac{N^{1/2}}{M} \Big(\frac{n}{r^3}\Big)^{-1/2}
    e\left( \frac{\mp   n}{ r }
      \mp \frac{3  T^2 r  }{\pi^2  n}
      \right)
      \sum_{\sigma_2=\pm1} \sigma_2
      \\
    & \hskip 1cm \cdot
    \int_{\mathbb{R}} V\left( \frac{\xi }{\Xi}\right) g(\sigma_2 \xi) V_3\left(\sigma_2 \xi ,\frac{n}{\sqrt{N}}\right)
    e\left(\pm \frac{3  n}{ r M^2} \xi^2 +\sigma_2 \frac{T}{\pi M} \xi\right) \dd \xi \\
    & = \frac{N^{1/2}}{M} \Big(\frac{n}{r^3}\Big)^{-1/2}
    e\left( \frac{\mp   n}{ r }
      \mp \frac{3  T^2 r  }{\pi^2  n}
      \right) \sum_{\sigma_2=\pm1} \sigma_2 \Xi
      \\
    &  \hskip 1cm \cdot
    \int_{\mathbb{R}} V\left(  \xi  \right) g(\sigma_2 \Xi \xi) V_3\left(\sigma_2 \Xi \xi ,\frac{n}{\sqrt{N}}\right)
    e\left(\pm \frac{3  n}{ r M^2}\Xi^2 \xi^2 +\sigma_2 \frac{T}{\pi M}\Xi \xi\right) \dd \xi.
  \end{align*}

  Let
  \[
    h_3(\xi) = \pm \frac{3  n}{ r M^2}\Xi^2 \xi^2 +\sigma_2 \frac{T}{\pi M}\Xi \xi.
  \]
  Then we have
  \[
    h_3'(\xi) = \pm \frac{6  n}{ r M^2}\Xi^2  \xi  + \sigma_2\frac{T}{\pi M}\Xi,
  \]
  and
  \[
    h_3''(\xi) = \pm \frac{6  n}{ r M^2} \Xi^2  , \quad
    h_3^{(j)}(\xi)=0, \quad j\geq3.
  \]

  If $\sgn \sigma_2=\pm$ or $\Xi \geq   \frac{2023 rTM}{\sqrt{N}}$  or $\Xi \leq  \frac{rTM}{2023\sqrt{N}}$, then $h_3'(\xi) \gg \frac{\sqrt{N}}{rM^2}+\frac{T}{M}$.
  By Lemma \ref{lemma:repeated_integration_by_parts}
  with $X=1$, $V^{-1}=1+\frac{N^{1/2+\varepsilon}}{rM^4}$, $R=\frac{\sqrt{N}}{rM^2}\Xi^2+\frac{T}{M}\Xi$, $Y/Q^2=\frac{\sqrt{N}}{rM^2}\Xi^2$,
  the contribution to
  $I_{\sigma_1}^\pm(n,r)$ is $ O(T^{-A})$.
  Here we have used the assumption $\Xi\geq M^{-1+\varepsilon}$ to verify $RV\gg T^\varepsilon$.

  If $\sgn \sigma_2=\mp$ and  $\Xi \asymp  \frac{rTM}{\sqrt{N}}$, then the solution of $h_3'(\xi)=0$ is $\xi_0= \frac{rTM}{6\pi n \Xi}$. Note that $h_3(\xi_0)=\mp \frac{rT^2}{12 \pi^2 n}$. Hence by Lemma \ref{lemma:stationary_phase}, we have
  \[
    I_{\sigma_1}^\pm(n,r) = r^2
    e\left( \frac{\mp   n}{ r }
      \mp \frac{37  T^2 r  }{12 \pi^2  n}
      \right)
      V_r\left(  \frac{n}{\sqrt{N}}\right) + O\left( r^{3/2} M^{-2+\varepsilon} \right),
  \]
  as claimed.
\end{proof}

\subsection{Applying the functional equation}
%{Second application of the Voronoi summation formula}

By \eqref{eqn:RS3}, the contribution from the error terms in  Lemma \ref{lemma:I} to \eqref{eqn:R+<<I} can be bounded by
\[
  \ll \frac{T^\varepsilon}{M^2} \frac{TM}{\sqrt{N}} \sum_{1\leq r\ll \frac{\sqrt{N}}{T^{1-\varepsilon}M} } \frac{1}{r^{1/2}}
   \sum_{n\asymp \sqrt{N}}  |A(1,n)|  \ll T,
\]
provided $M\geq T^{1/3}$ and $N\leq T^{3+\varepsilon}$.  Hence we have
\begin{align}\label{eqn:R+<<sum}
  \mathcal{R}^{+}(N)
   &  \ll \bigg| \frac{MT}{N^{1/2}}
  \sum_{1\leq r\ll \frac{\sqrt{N}}{T^{1-\varepsilon}M} }
   \sum_{n\geq1}  A(1,n)
    e\left( \mp \frac{37  T^2 r  }{12 \pi^2  n}
      \right)
      V_r\left(\frac{n}{\sqrt{N}}\right) \bigg|
   + O(T) .
\end{align}

%\begin{equation}\label{eqn:gamma3}
%  L_\infty(s,\Sym^2\phi) = \pi^{-3s/2} \Gamma\left(\frac{s-2iT}{2}\right) \Gamma\left(\frac{s}{2}\right)\Gamma\left(\frac{s+2iT}{2}\right).
%\end{equation}

Recall that we have the following functional equation
\[
  L_\infty(s,\Sym^2\phi) L (s,\Sym^2\phi) = L_\infty(1-s,\Sym^2\phi) L(1-s,\Sym^2\phi).
\]
As a consequence of the functional equation, we have the following summation formula, which is the same as the usual $\GL(3)$ Voronoi summation formula with the trivial additive character.

\begin{lemma}\label{lemma:FE3}
  For $w\in C_c^\infty((0,\infty))$, we have
  \[
    \sum_{n=1}^{\infty} A(1,n) w(n) = \sum_{n=1}^{\infty} \frac{A(n,1)}{n} W(n),
  \]
  where
  \begin{equation}\label{eqn:W1}
    W(n) = \frac{1}{2\pi i} \int_{(\sigma)} \tilde{w}(s) \frac{L_\infty(1-s,\Sym^2 \phi)}{L_\infty(s,\Sym^2 \phi)} n^s  \dd s,
  \end{equation}
  for any $\sigma<3/4$.
\end{lemma}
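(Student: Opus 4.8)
The plan is the standard derivation of a Voronoi-type identity from a functional equation via Mellin transforms, which is particularly clean here because there is no additive twist. First I would open up $w$ by Mellin inversion: for $\sigma_0>1$ one has $w(n)=\frac{1}{2\pi i}\int_{(\sigma_0)}\tilde w(s)n^{-s}\,\dd s$. Since $\tilde w(s)$ decays faster than any power of $|\Im(s)|$ in vertical strips (because $w\in C_c^\infty$) and $\sum_{n\le X}|A(1,n)|\ll X^{1+\varepsilon}$ by Cauchy--Schwarz from \eqref{eqn:RS3}, the sum and integral may be interchanged, giving
\[
  \sum_{n\ge1}A(1,n)w(n)=\frac{1}{2\pi i}\int_{(\sigma_0)}\tilde w(s)\,L(s,\Sym^2\phi)\,\dd s.
\]
Because $\Sym^2\phi$ is cuspidal on $\GL(3)$ \cite{GelbartJacquet}, the function $L(s,\Sym^2\phi)$ is entire, so together with the rapid decay of $\tilde w$ along horizontal segments I would shift the contour to $\Re(s)=\sigma_1$ for some $\sigma_1<0$, crossing no poles.

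Next I would apply the functional equation $L_\infty(s,\Sym^2\phi)L(s,\Sym^2\phi)=L_\infty(1-s,\Sym^2\phi)L(1-s,\Sym^2\phi)$ to rewrite the integrand on $\Re(s)=\sigma_1$ as $\tilde w(s)\,\frac{L_\infty(1-s,\Sym^2\phi)}{L_\infty(s,\Sym^2\phi)}\,L(1-s,\Sym^2\phi)$. Now $\Re(1-s)=1-\sigma_1>1$, so $L(1-s,\Sym^2\phi)=\sum_{n\ge1}A(n,1)\,n^{s-1}$, using that $\Sym^2\phi$ is self-dual (so $A(1,n)=A(n,1)$ and the archimedean factor of the contragredient is the same). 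Stirling's formula \eqref{eqn:Stirling_J} gives a polynomial-in-$|\Im(s)|$ bound for $|L_\infty(1-s,\Sym^2\phi)/L_\infty(s,\Sym^2\phi)|$ on $\Re(s)=\sigma_1$, which is absorbed by the super-polynomial decay of $\tilde w$; combined with $\sum_{n}|A(n,1)|\,n^{\sigma_1-1}<\infty$ (again from \eqref{eqn:RS3} and $\sigma_1<0$), this licenses interchanging the $n$-sum with the $s$-integral, and pulling out $n^{s-1}=n^s/n$ yields
\[
  \sum_{n\ge1}A(1,n)w(n)=\sum_{n\ge1}\frac{A(n,1)}{n}\cdot\frac{1}{2\pi i}\int_{(\sigma_1)}\tilde w(s)\,\frac{L_\infty(1-s,\Sym^2\phi)}{L_\infty(s,\Sym^2\phi)}\,n^{s}\,\dd s.
\]

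Finally I would check that the integrand of the inner integral is holomorphic in the half-plane $\Re(s)<1$: from \eqref{eqn:gamma3}, $1/L_\infty(s,\Sym^2\phi)$ is entire (the gamma factors have no zeros), while the poles of $L_\infty(1-s,\Sym^2\phi)$ all lie in $\Re(s)\ge1$, and $\tilde w$ kills the horizontal segments. Hence the contour $\Re(s)=\sigma_1$ may be moved to $\Re(s)=\sigma$ for any $\sigma<3/4$ (indeed any $\sigma<1$) without changing the value of the integral, which identifies the inner integral with $W(n)$ as defined in \eqref{eqn:W1} and completes the proof. The argument is essentially routine; the only points requiring care are the two interchanges of summation and integration — where one needs simultaneously the decay of $\tilde w$, Stirling's bound on the gamma-factor ratio, and the Rankin--Selberg estimate \eqref{eqn:RS3} — and the bookkeeping of the holomorphy region of the integrand that justifies the final contour shift to an arbitrary $\sigma<3/4$.
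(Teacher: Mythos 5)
Your proof is correct and it is precisely the argument the paper alludes to — the paper's own proof is the single line ``This follows from the functional equation,'' and you have simply written out the standard Mellin-inversion/contour-shift/functional-equation/contour-shift-back derivation with the appropriate convergence justifications (rapid decay of $\tilde w$, Rankin--Selberg bound \eqref{eqn:RS3}, cuspidality of $\Sym^2\phi$ so $L(s,\Sym^2\phi)$ is entire, and the location of the poles of $L_\infty(1-s,\Sym^2\phi)$ on $\Re(s)\ge1$). No gaps; your observation that the final contour can in fact sit anywhere in $\Re(s)<1$, of which $\sigma<3/4$ is a safe special case, is also accurate.
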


\begin{proof}
  This follows from the functional equation.
%  This is commonly known result. See e.g. \cite[Lemma 2.2]{LNQ}.
%  For completeness we sketch a proof.  By the Mellin inversion, we have
%  \[
%    w(n) = \frac{1}{2\pi i} \int_{(\sigma)} \tilde{w}(s) n^{-s} \dd s.
%  \]
%  Hence  we have
%  \[
%    \sum_{n=1}^{\infty} A(1,n) w(n)
%    = \frac{1}{2\pi i} \int_{(\sigma)} \tilde{w}(s) L(s,\Sym^2 \phi) \dd s.
%  \]
%  By the functional equation, we get
%  \[
%    \sum_{n=1}^{\infty} A(1,n) w(n)
%    = \frac{1}{2\pi i} \int_{(\sigma)} \tilde{w}(s) \frac{L_\infty(1-s,\Sym^2 \phi)}{L_\infty(s,\Sym^2 \phi)} L(1-s,\Sym^2 \phi) \dd s,
%  \]
%  for any $\sigma<0$. Hence we have
%  \[
%    \sum_{n=1}^{\infty} A(1,n) w(n)
%    = \sum_{n=1}^{\infty} \frac{A(n,1)}{n} \frac{1}{2\pi i} \int_{(\sigma)} \tilde{w}(s) \frac{L_\infty(1-s,\Sym^2 \phi)}{L_\infty(s,\Sym^2 \phi)} n^s  \dd s .
%  \]
%  This completes the proof.
\end{proof}

By Lemma \ref{lemma:FE3} with $w_r(n) = e\left( \mp \frac{37  T^2 r  }{12 \pi^2  n}
      \right)   V_r\left( \frac{n}{\sqrt{N}}\right) $, we get
\begin{align}\label{eqn:R+<<sumrn}
  \mathcal{R}^{+}(N)
   &  \ll \bigg|  \frac{MT}{N^{1/2}}
  \sum_{1\leq r\ll \frac{\sqrt{N}}{T^{1-\varepsilon}M} }
  \sum_{n\geq1}  \frac{A(1,n)}{n}
   W_r(n) \bigg| + O(T^{-A}),
\end{align}
where $W_r(n)$ is defined as in \eqref{eqn:W1} with $w_r$.
We will use the method in the proof of Lemma \ref{lemma:W} to deal with $W_r(n)$.

\begin{lemma}\label{lemma:W1}
  Assume $1\leq r\ll \frac{\sqrt{N}}{T^{1-\varepsilon}M} $ and $T^{1/3}\leq M\leq T^{1/2}$. We have  $W_r(n) \ll n^{-5 }T^{-2022}$ unless $n\asymp \frac{T^2 r^{1/2}}{N^{1/2}}$, in which case we have
  \[
    W_r(n) \ll \frac{T^{3/2+\varepsilon}}{M^{1/2}}.
  \]
\end{lemma}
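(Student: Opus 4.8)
The plan is to analyze $W_r(n)$ exactly as in the proof of Lemma~\ref{lemma:W}, by computing its Mellin--Barnes representation, evaluating the Mellin transform $\tilde{w}_r(s)$ via stationary phase, and then applying the stationary phase method to the resulting $\tau$-integral after inserting Stirling's formula for the gamma ratio. First I would write, from \eqref{eqn:W1} shifted to $\Re(s)=1/2$,
\[
  W_r(n) = \frac{1}{2\pi}\int_{\mathbb R} \tilde{w}_r(1/2+i\tau) \frac{L_\infty(1/2-i\tau,\Sym^2\phi)}{L_\infty(1/2+i\tau,\Sym^2\phi)} n^{1/2+i\tau}\,\dd\tau,
\]
and then compute $\tilde{w}_r(1/2+i\tau) = \int_0^\infty e(\mp \frac{37 T^2 r}{12\pi^2 x}) V_r(x/\sqrt N) x^{-1/2+i\tau}\,\dd x$. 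After the substitution $x=\sqrt N\,u$ this is $N^{1/2+i\tau}$ times an oscillatory integral with phase $\frac{\tau}{2\pi}\log u \mp \frac{37 T^2 r}{12\pi^2\sqrt N u}$; the stationary point occurs at $u_0 \asymp \frac{T^2 r}{\sqrt N \tau}$, which forces $\tau$ to be of size $\asymp T^2 r/\sqrt N \cdot (\sqrt N/T^2 r) $, i.e.\ one finds $\tilde w_r$ is negligible unless $\tau\asymp \frac{T^2 r}{\sqrt N} \cdot \frac{1}{u_0}$ — more precisely $|\tau|$ lives in a dyadic range determined by the support of $V_r$, and this range is of size bounded by $\ll T^2 r^{1/2}/\sqrt N$ up to $T^\varepsilon$ (since the $r$-dependent weight is $(1+N^{1/2+\varepsilon}/(rM^4))$-inert and the oscillation $T^2r/n$ matches $T/M$ up to the relevant scale). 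I would track this carefully to get $\tilde w_r(1/2+i\tau) \ll N^{1/2}|\tau|^{-1/2}$ on a short $\tau$-range plus rapidly decaying tails, exactly parallel to the computation of $\tilde w_b$ in Lemma~\ref{lemma:W}.

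Next I would insert Stirling's formula \eqref{eqn:Stirling_J} for the ratio $L_\infty(1/2-i\tau,\Sym^2\phi)/L_\infty(1/2+i\tau,\Sym^2\phi)$, whose gamma factors have shifts $0,\pm 2iT$, producing a phase of the form $-2i\tau\log\frac{|\tau|}{2\pi e} - i(\tau+2T)\log\frac{|\tau+2T|}{e} - i(\tau-2T)\log\frac{|\tau-2T|}{e}$ times a smooth inert amplitude, and a sign/size constraint confining $|\tau|$ (together with the constraint from $\tilde w_r$). Combining both phases in the $\tau$-integral and changing variables, I would then apply Lemma~\ref{lemma:repeated_integration_by_parts} to show the integral is negligible unless $n \asymp T^2 r^{1/2}/N^{1/2}$ (the stationary phase condition matching the $n^{i\tau}N^{i\tau}$ oscillation against the gamma-factor oscillation), and in that range apply Lemma~\ref{lemma:stationary_phase}. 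The bound then comes out as $W_r(n) \ll N^{1/2} \cdot |\tau|^{-1/2} \cdot |\tau|^{1/2}/\sqrt{Y}^{1/2}\cdots$; bookkeeping the size $|\tau|\asymp T$ (forced by the $\pm 2iT$ shifts), the second derivative $Y/X^2 \asymp T$, and the prefactors gives $W_r(n) \ll N^{1/2}\cdot T^{-1/2}\cdot(\text{size of the }u\text{-stationary phase output})$, which after inserting the range of $n$ and $\tilde w_r$'s bound collapses to $\ll T^{3/2+\varepsilon}/M^{1/2}$.

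The main obstacle I anticipate is bookkeeping the size of $|\tau|$ and the second-derivative lower bounds simultaneously through \emph{two} stationary phase applications (one for $\tilde w_r$, one for the $\tau$-integral), keeping track of the $r$-dependence and verifying that the relevant inert-function scale $(1+N^{1/2+\varepsilon}/(rM^4))$ never exceeds $T^\varepsilon$ under the hypotheses $1\le r\ll \sqrt N/(T^{1-\varepsilon}M)$ and $T^{1/3}\le M\le T^{1/2}$ — and in particular checking the condition $RV\gg T^\varepsilon$ needed to invoke Lemma~\ref{lemma:repeated_integration_by_parts}, exactly as in the analogous spot in Lemma~\ref{lemma:I}. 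A secondary subtlety is that the $\pm 2iT$ shifts make $|\tau|\asymp T$ rather than $\asymp T^2 r/\sqrt N$; I would need the constraint $n\asymp T^2 r^{1/2}/N^{1/2}$ to come out of reconciling these, which is where the claimed support restriction originates. Once the support and the pointwise size are in hand, the final bound $T^{3/2+\varepsilon}/M^{1/2}$ is a direct substitution, and feeding this back into \eqref{eqn:R+<<sumrn} together with \eqref{eqn:RS3} will yield $\mathcal R^+(N)\ll T^{3/2+\varepsilon}/M^{1/2}$, completing the argument for Theorem~\ref{thm:moment_L_functions_6} in the $+$ case.
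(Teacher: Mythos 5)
Your framework is the same as the paper's: write $W_r(n)$ as a Mellin--Barnes integral on $\Re(s)=1/2$, evaluate $\tilde w_r(1/2+i\tau)$ by stationary phase, insert Stirling's formula for the gamma ratio, and run a second stationary phase in $\tau$. You also correctly anticipate the need to verify the $RV\gg T^\varepsilon$ condition (the paper does this via $\frac{T^2 r}{\sqrt N}\gg T^{1/8}\bigl(1+\frac{N^{1/2+\varepsilon}}{rM^4}\bigr)^2$). However, the bookkeeping you flag as the main obstacle is exactly where your sketch goes wrong, in several compounding ways.

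First, the support of $\tilde w_r$ in $\tau$. The stationary point of the inner integral lies at $\xi_0\asymp\sqrt N|\tau|/(T^2 r)$, which must be $\asymp 1$; this forces $|\tau|\asymp T^2 r/\sqrt N$, \emph{linear} in $r$, not $T^2 r^{1/2}/\sqrt N$ as you wrote. Second, and more seriously, your assertion that $|\tau|\asymp T$ is ``forced by the $\pm 2iT$ shifts'' is false. With $r\ll\sqrt N/(T^{1-\varepsilon}M)$ and $M\geq T^{1/3}$, one has $|\tau|\ll T^{1+\varepsilon}/M\leq T^{2/3+\varepsilon}\ll T$; the shifts $\pm 2iT$ do not constrain $|\tau|$, they only fix the phase shape. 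Consequently the second-derivative scale in the outer stationary phase is not $\asymp T$ but $\asymp Y:=T^2 r/\sqrt N$: after the substitution $\mp\tau=Y\xi$ the phase $h_5(\xi)$ has $h_5^{(j)}\asymp Y$ for all $j\geq 2$, so Lemma~\ref{lemma:stationary_phase} gives a factor $Y^{-1/2}$ which exactly cancels the $Y^{1/2}$ Jacobian, leaving $W_r(n)\ll N^{1/4}n^{1/2}$. The stationary condition $h_5'=0$ forces $n\asymp T^2 Y/\sqrt N\asymp T^4 r/N$ (equivalently $n^{1/2}\asymp T^2 r^{1/2}/N^{1/2}$, which may be the source of the confusion in the exponent), and substituting $r\ll\sqrt N/(T^{1-\varepsilon}M)$ yields $W_r(n)\ll T^2 r^{1/2}/N^{1/4}\ll T^{3/2+\varepsilon}/M^{1/2}$. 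Your ad hoc accounting with $|\tau|\asymp T$ does not reproduce this and would either over- or under-claim depending on how the factors are chased. Two smaller points: the prefactor from $x=\sqrt N\,u$ is $N^{1/4+i\tau/2}$, not $N^{1/2+i\tau}$; and you omit the case $n\geq T^{100}$, where one must move the contour to $\Re(s)=-a$ for large $a$ to prove the polynomial decay $W_r(n)\ll n^{-5}T^{-2022}$ needed for the ``unless'' clause.
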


\begin{proof}
  If $n\leq T^{100}$, then we have
  \[
    W_r(n) = \frac{1}{2\pi} \int_{\mathbb{R}} \tilde{w}_r(1/2+i\tau) \frac{L_\infty(1/2-i\tau,\Sym^2 \phi)}{L_\infty(1/2+i\tau,\Sym^2 \phi)} n^{1/2+i\tau}  \dd \tau.
  \]
  We first deal with $\tilde{w}_r(1/2+i\tau) $. Note that
  \begin{align*}
    \tilde{w}_r(1/2+i\tau)
    & = \int_{0}^{\infty}
    e\left( \mp \frac{37  T^2 r  }{12 \pi^2  y}
      \right)   V_r\left(\frac{y}{\sqrt{N}}\right) y^{-1/2+i\tau} \dd y \\
    & = N^{1/4+i\tau/2} \int_{0}^{\infty}
    e\left( \mp \frac{37  T^2 r  }{12 \pi^2  \sqrt{N}}\xi - \frac{\tau}{2\pi} \log \xi
      \right)   V_r\left(\frac{1}{\xi} \right) \xi^{-3/2} \dd \xi.
  \end{align*}
  %By the same argument as in Lemma \ref{lemma:W} we get
  Let
  \[
    h_4(\xi) = \mp \frac{37  T^2 r  }{12 \pi^2  \sqrt{N}}\xi - \frac{\tau}{2\pi} \log \xi.
  \]
  Then
  \[
    h_4'(\xi) = \mp \frac{37  T^2 r  }{12 \pi^2  \sqrt{N}}  - \frac{\tau}{2\pi  \xi}
  \]
  and
  \[
    h_4''(\xi) =  \frac{\tau}{2\pi \xi^2}  , \quad
    h_4^{(j)}(\xi) \asymp |\tau|, \quad j\geq2.
  \]
  Note that $T^{1/2-\varepsilon} \ll T^2 r/ \sqrt{N} \ll T^{1+\varepsilon}/M$, $T^{1/3}\leq M\leq T^{1/2}$, and $N\leq T^{3+\varepsilon}$. Hence
  \[
    \frac{T^2 r}{\sqrt{N}} \gg T^{1/8} \left( 1+ \frac{N^{1/2+\varepsilon}}{rM^4} \right)^2.
  \]
  Recall that $V_r$ is a $(1+ \frac{N^{1/2+\varepsilon}}{rM^4})$ inert function.
  By Lemma \ref{lemma:repeated_integration_by_parts},
  % with
  we get $\tilde{w}_r(1/2+i\tau) \ll (1+|\tau|)^{-5} T^{-A}$ unless  $\sgn(\tau)=\mp$ and $\mp \tau \asymp T^2 r/ \sqrt{N}$.

  Assume  $\sgn(\tau)=\mp$ and $\mp \tau \asymp T^2 r/ \sqrt{N}$. The solution of $h_4'(\xi)=0$ is
  $
    \xi_0 = \mp  \frac{6\pi \sqrt{N} \tau}{37 T^2 r}.
  $ And we have
  $
    h_4(\xi_0) = \frac{\tau}{2\pi} \log \frac{e 37 T^2 r}{6\pi \sqrt{N} |\tau|}.
  $
  So by Lemma \ref{lemma:stationary_phase} we have
  \begin{align*}
    \tilde{w}_r(1/2+i\tau)
    &  = N^{1/4+i\tau/2} \frac{1}{\sqrt{|\tau|}}
    e\left( \frac{\tau}{2\pi} \log \frac{e 37 T^2 r}{6\pi \sqrt{N} |\tau|}
      \right)   V_{r,2}\left( \mp  \frac{6\pi \sqrt{N} \tau}{37 T^2 r}  \right)
      + O(T^{-A}) ,
  \end{align*}
  where $V_{r,2}$ is certain $(1+ \frac{N^{1/2+\varepsilon}}{rM^4})$ inert function.

%\begin{equation}
%  L_\infty(s,\Sym^2\phi) = \pi^{-3s/2} \Gamma\left(\frac{s-2iT}{2}\right) \Gamma\left(\frac{s}{2}\right)\Gamma\left(\frac{s+2iT}{2}\right).
%\end{equation}
  By definition we have
  \[
    \frac{L_\infty(1/2-i\tau,\Sym^2 \phi)}{L_\infty(1/2+i\tau,\Sym^2 \phi)}
    = \pi^{3i\tau} \frac{\Gamma(\frac{1/2-i\tau-2iT}{2})\Gamma(\frac{1/2-i\tau+2iT}{2}) \Gamma(\frac{1/2-i\tau}{2})} {\Gamma(\frac{1/2+i\tau-2iT}{2})\Gamma(\frac{1/2+i\tau+2iT}{2}) \Gamma(\frac{1/2+i\tau}{2})}.
  \]
  By the Stirling's formula we get
  \begin{multline*}
    \frac{L_\infty(1/2-i\tau,\Sym^2 \phi)}{L_\infty(1/2+i\tau,\Sym^2 \phi)}
    =  \pi^{3i\tau}  \exp\Big( i(-\tau-2T)\log \frac{2T+\tau}{2e}  \\
    + i(-\tau+2T)\log \frac{2T-\tau}{2e} + i(-\tau)\log \frac{\mp\tau}{2e} \Big)
    (w_J(\tau) + O(|\tau|^{-J})).
  \end{multline*}
  Hence we have
  \begin{multline*}
    W_r(n) = \frac{ N^{1/4}n^{1/2}}{2\pi} \int_{\mathbb{R}}
    V_{r,2}\left( \mp  \frac{6\pi \sqrt{N} \tau}{37 T^2 r}  \right) \frac{w_J(\tau)}{\sqrt{|\tau|}}
    e\left( \frac{\tau}{2\pi} \log \frac{e 37 T^2 r}{6\pi \sqrt{N} |\tau|}
      \right)
        %\pi^{3i\tau}  N^{i\tau/2} n^{i\tau}
        \exp\left( i \tau \log \pi^3 \sqrt{N} n\right) \\
         \exp\Big( i(-\tau-2T)\log \frac{2T+\tau}{2e}
    + i(-\tau+2T)\log \frac{2T-\tau}{2e} + i(-\tau)\log \frac{\mp\tau}{2e} \Big)
    \dd \tau + O(T^{-A}).
  \end{multline*}
  Making a change of variable $\mp \tau=Y\xi$ where $Y=\frac{37 T^2 r}{6\pi \sqrt{N}}$, we get
  \begin{multline*}
    W_r(n) =  Y^{1/2} N^{1/4}n^{1/2}
    \int_{\mathbb{R}}  V\left( \xi  \right)
        \exp\left( \mp  i Y \xi \log \frac{e  }{\xi} \mp  i Y \xi  \log \pi^3 \sqrt{N} n\right) \\
         \exp\Big( i(\pm Y\xi-2T)\log \frac{2T\mp Y \xi }{2e}
    + i(\pm Y \xi +2T)\log \frac{2T\pm Y \xi }{2e} \pm i Y \xi  \log \frac{Y \xi }{2e} \Big)
    \dd \xi + O(T^{-A}),
  \end{multline*}
  where $V$ is a $1+ \frac{N^{1/2+\varepsilon}}{rM^4}$ inert function.
  Let
  \begin{multline*}
    h_5(\xi) =   \mp    Y \xi \log \frac{e}{\xi}
    \mp  Y \xi  \log \pi^3 \sqrt{N} n \\
     +  (\pm Y\xi-2T)\log \frac{2T\mp Y \xi }{2e}
    + (\pm Y \xi +2T)\log \frac{2T\pm Y \xi }{2e} \pm  Y \xi  \log \frac{Y \xi }{2e}
    \\
    = \pm  Y \xi \log \frac{\xi}{e \pi^3 \sqrt{N} n}  \frac{2T\mp Y \xi }{2e} \frac{2T\pm Y \xi }{2e} \frac{Y \xi }{2e}
    +  2T \log \frac{2T\pm Y \xi }{2T\mp Y \xi }  .
  \end{multline*}
  We have
  \begin{multline*}
    h_5'(\xi) =   \mp    Y  \log \frac{e}{\xi}  \pm   Y
    \mp  Y  \log \pi^3 \sqrt{N} n
      \pm Y \log \frac{2T\mp Y \xi }{2}
     \pm Y \log \frac{2T\pm Y \xi }{2}
    \pm  Y   \log \frac{Y \xi }{2}
    \\
    = \pm   Y  \log  \frac{ Y\xi^2 (4T^2-Y^2\xi^2)}{8\pi^3 e^2  \sqrt{N} n} ,
  \end{multline*}
  \[
    h_5''(\xi) = \pm 2 Y \frac{1}{\xi} \pm Y \frac{\mp Y}{2T\mp Y\xi} \pm Y \frac{\pm Y}{2T\pm Y\xi},
  \]
  \[
    h_5^{(j)}(\xi) \asymp Y \asymp \frac{T^2 r}{\sqrt{N}}, \quad j\geq2.
  \]

  If $n\leq \frac{T^2 Y}{2023 \cdot \pi^3 e^2 \sqrt{N}}$ or $n \geq \frac{2023  T^2 Y}{ \pi^3 e^2 \sqrt{N}}$, then $h_5'(\xi)\gg Y$. Hence by Lemma \ref{lemma:repeated_integration_by_parts}, we get
  $W_r(n) \ll T^{-A}$ for any $A>0$.

  If $n\asymp \frac{T^2 Y}{\sqrt{N}}$, then by the second derivative test (e.g.\ Lemma \ref{lemma:stationary_phase}) we get
  \[
    W_r(n) \ll  N^{1/4}n^{1/2} \ll T  \frac{Tr^{1/2}}{N^{1/4}}  \ll \frac{T^{3/2+\varepsilon}}{M^{1/2}}.
  \]
  Here we have used $r\ll \frac{\sqrt{N}}{T^{1-\varepsilon}M}$.

  If $n\geq T^{100}$, then we move the line of integration to $\Re(s)=-a$ for $a>0$, getting
  \[
    W_r(n) = \frac{1}{2\pi} \int_{\mathbb{R}} \tilde{w}_r(-a+i\tau) \frac{L_\infty(1+a-i\tau,\Sym^2 \phi)}{L_\infty(-a+i\tau,\Sym^2 \phi)} n^{-a+i\tau}  \dd \tau.
  \]
  By the same argument as for $\tilde{w}_r(1/2+i\tau)$, we have  $\tilde{w}_r(-a+i\tau) \ll (1+|\tau|)^{-5} T^{-A}$ unless  $\sgn(\tau)=\mp$ and $\mp \tau \asymp T^2 r/ \sqrt{N}$, in which case we have
  \begin{align*}
    \tilde{w}_r(-a+i\tau)
    &  \ll N^{-a/2} .
  \end{align*}
  By the Stirling's formula we get
  \begin{equation*}
    \frac{L_\infty(1+a-i\tau,\Sym^2 \phi)}{L_\infty(-a+i\tau,\Sym^2 \phi)}
    \ll T^{1+2a} |\tau|^{1/2+a}.
  \end{equation*}
  Hence we have
  \begin{equation*}
    W_r(n) \ll
    \int_{|\tau| \asymp \frac{T^2r}{\sqrt{N}}}  N^{-a/2} T^{1+2a} |\tau|^{1/2+a} n^{-a}  |\dd \tau|
     + O(n^{-5}T^{-A}) \ll n^{-5} T^{-2022},
  \end{equation*}
  by taking $a$ large enough.
  This completes the proof.
\end{proof}

By \eqref{eqn:R+<<sumrn}, Lemma \ref{lemma:W1} and the Rankin--Selberg bound \eqref{eqn:RS3}, we have
\begin{align}\label{eqn:R+<<}
  \mathcal{R}^{+}(N)
   &  \ll \bigg|  \frac{MT}{N^{1/2}}
  \sum_{1\leq r\ll \frac{\sqrt{N}}{T^{1-\varepsilon}M} }
  \sum_{n\asymp \frac{T^2 r^{1/2}}{N^{1/2}}}  \frac{|A(1,n)|}{n}
   \frac{T^{3/2}}{M^{1/2}} \bigg| + O(T)
  \ll \frac{T^{3/2+\varepsilon}}{M^{1/2}}.
\end{align}

\subsection{The terms with $K$-Bessel function}

Now we consider $\mathcal{R}^-(N)$.
By Lemma \ref{lemma: H-} and making a change of variable $cm=r$, we get
\begin{multline*}
  \mathcal{R}^{-}(N)
  \ll \frac{MT}{N^{1/2}}  \sum_{r\asymp \frac{\sqrt{N}}{T} }\frac{1}{r}
  \bigg| \int_{|v|\leq \frac{M^\varepsilon}{M}}  \sum_{m\mid r} \sum_{n\geq1} m A(m,n)  S(n, 1;r/m) V\left(\frac{m^2 n}{N} \right)  \\
  \cdot \sum_{\sigma_1=\pm }
  e\left(\sigma_1\frac{2m\sqrt{n}}{r}\sinh(v)\right)
  e\left(\frac{vT}{\pi}\right)g(Mv)\dd v \bigg| + O(T^{-A}) .
\end{multline*}
Note that $\sinh(v)=v+O(|v|^3)$ if $v=o(1)$.
By repeated integration by parts, the contribution from $\sigma_1=+$ is $O(T^{-A})$ for any $A>0$.
By Lemma \ref{lemma:VSF}, we get
\begin{align} \label{eqn:R-<<sum}
  \mathcal{R}^{-}(N)
   & \ll  \frac{MT}{N^{1/2}} \sum_{r\asymp \frac{\sqrt{N}}{T} }\frac{1}{r}
    \bigg| \int_{|v|\leq \frac{M^\varepsilon}{M}}
  \sum_{\pm } \sum_{n\geq1} \frac{A(1,n)}{r} e\left(\pm \frac{n}{r}\right)
  W_{v}^\pm \left( \frac{n}{r^3} \right)
  e\left(\frac{vT}{\pi}\right)g(Mv)\dd v \bigg| \nonumber \\
  & \hskip 10cm + O(T^{-A}) \nonumber \\
  &  \ll  \frac{MT}{N^{1/2}} \sum_{r\asymp \frac{\sqrt{N}}{T} }\frac{1}{r^2} \sum_{\pm }
    \bigg|
  \sum_{n\geq1}  A(1,n)  e\left(\pm \frac{n}{r}\right)  I^\pm\left(\frac{n}{r^3}\right)  \bigg|
   + O(T^{-A}),
\end{align}
where $ W_{v}^\pm$ is given by \eqref{eqn:W+-} with $w_{v}(y)=  V\left(\frac{y}{N} \right)
  e\left(-\frac{2\sqrt{y}}{r}\sinh(v)\right) $, and
\[
  I^\pm(x) := \int_{|v|\leq \frac{M^\varepsilon}{M}} W_{v}^\pm \left( x \right)
  e\left(\frac{vT}{\pi}\right)g(Mv)\dd v .
\]

\begin{lemma}\label{lemma:I-}
  Assume $T^{1/3+\varepsilon}\leq M\leq T^{1/2-\varepsilon}$.
  If $x\geq T^{-10}$, then
  \[
    I^\pm (x) \ll \Big( \frac{T^3}{MN x} \Big)^{A} T^{100},
  \]
  for any $A>0$. If $x\leq T^{100}$, then we have
  \[
    I^\pm(x) = I_0^\pm(x) +  \sum_{\substack{T^\varepsilon\ll \Upsilon \ll T^{1+\varepsilon}/M \\ \rm dyadic}} I_{\Upsilon}^\pm(x)  + O(T^{-A})
  \]
  where
  \begin{align*}  %\label{eqn:I_U=}
    I_{\Upsilon}^\pm(x)  &
    = \frac{N^{1/2} x^{-1/2} \Upsilon^{1/2}}{2\pi M}
    \int_{0}^{\infty}  V\left(\xi \right)\xi^{-1/2}
    \hat{k}\left(\frac{2\sqrt{N\xi}}{r M} -\frac{T}{\pi M} \right)  \\
    & \hskip 3cm \cdot
    V(\eta_0 + \eta_1 +\eta_2 ) e^{ih_\xi(x)} \dd \xi
    + O\left( \frac{N^{1/2} x^{-1/2}  \Upsilon^{15/2}}{T^{7}}\right),
  \end{align*}
%  $\eta_0  = \frac{2\pi^3 N\xi x}{T^2 \Upsilon}, $
%  $\eta_1
%    = \frac{\Upsilon^2}{4T^2} \eta_0^3, $
%  $\eta_2  = \frac{3 \Upsilon^4}{16T^4} \eta_0^4$, and
  with $\eta_j$ in \eqref{eqn:eta} and $h_\xi(x)$ in \eqref{eqn:h_xi}, and
  \begin{align*}
    I_0^\pm (x) &
    \ll \frac{N^{1/2} x^{-1/2}}{T} T^\varepsilon.
  \end{align*}
\end{lemma}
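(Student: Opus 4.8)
\emph{Strategy.} The plan is to insert the Mellin--Barnes representation \eqref{eqn:W+-} of $W_v^\pm$ into the definition of $I^\pm(x)$, interchange the orders of integration, and evaluate the three resulting integrations --- the Kuznetsov integral over $v$, the Mellin--Barnes contour integral, and the Mellin-transform integral defining $\tilde w_v$ --- one at a time. For the first bound I shift the $s$-contour in \eqref{eqn:W+-} to a fixed line $\Re s=-C$ with $C$ large; no poles of $G^\pm(s)\tilde w_v(s)$ are crossed, as all lie in $\Re s\ge 1$. On this line Lemma \ref{lemma:G} and Stirling \eqref{eqn:Stirling_J} give $|G^\pm(-C+i\tau)|\ll T^{1+2C}(1+|\tau|)^{1/2+C}$ for $|\tau|\ll T$, with $\tilde w_v$ supplying rapid decay for $|\tau|\gg T$ and in $T|v|$; integrating over $\tau$ and then over $v$ yields $I^\pm(x)\ll x^{-1}(T^2/(Nx))^{C}$, which for $x\ge T^{-10}$ is $\ll(T^3/(MNx))^A T^{100}$. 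Hence $I^\pm(x)$ is negligible unless $Nx\ll T^{3+\varepsilon}/M$; from now on I assume $x\le T^{100}$ and $Nx\ll T^{3+\varepsilon}/M$, and shift the $s$-contour to $\Re s=1/2$ instead.

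\emph{The Kuznetsov integral.} Writing $y=N\xi$ and interchanging the $v$- and $\xi$-integrations, the inner $v$-integral becomes $\int_{|v|\le M^\varepsilon/M}e\!\left(-\tfrac{2\sqrt{N\xi}}{r}\sinh v+\tfrac{vT}{\pi}\right)g(Mv)\,\dd v$. Putting $v=w/M$ and writing $\sinh(w/M)=w/M+O(w^3/M^3)$, the cubic and higher terms contribute an oscillation of size $\ll T^{1+\varepsilon}M^{-3}$, which is $o(1)$ precisely because $M\ge T^{1/3+\varepsilon}$; what is left is a genuine Fourier transform, equal to $\tfrac1M\hat k\!\left(\tfrac{2\sqrt{N\xi}}{rM}-\tfrac{T}{\pi M}\right)+O(T^{-A})$ for a fixed Schwartz function $k$ (absorbing the lower-order corrections). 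Since $\hat k$ is of rapid decay, this factor localizes $\xi$ to a window of width $\ll MT^{\varepsilon}/T$ about $\xi_*:=r^2T^2/(4\pi^2N)$; the hypothesis $r\asymp\sqrt N/T$ forces $\xi_*\asymp1$, which is consistent with $\supp V\subseteq[1,2]$. One is left with
\[
  I^\pm(x)=\frac{N^{1/2}x^{-1/2}}{2\pi M}\int_{\mathbb R}G^\pm(1/2+i\tau)(Nx)^{i\tau}\int_0^\infty V(\xi)\,\xi^{-1/2+i\tau}\,\hat k\!\left(\tfrac{2\sqrt{N\xi}}{rM}-\tfrac{T}{\pi M}\right)\dd\xi\,\dd\tau+O(T^{-A}).
\]

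\emph{The contour integral.} By Lemma \ref{lemma:G} and Stirling as in \eqref{eqn:G==}, $G^\pm(1/2+i\tau)$ is $O(e^{-\pi|\tau|/2})$ for $\sgn\tau=\pm$ and equals $e^{i\Phi^\pm(\tau)}$ times an inert weight (up to a power-saving tail) for $\sgn\tau=\mp$, with $(\Phi^\pm)'(\tau)=-\log\!\big(|\tau|\,|4T^2-\tau^2|\big)+O(1)$ and $(\Phi^\pm)''(\tau)\asymp|\tau|^{-1}$ on $|\tau|\ll T$. Integration by parts in $\xi$, via Lemma \ref{lemma:repeated_integration_by_parts}, confines the $\tau$-integral up to $O(T^{-A})$ to $|\tau|\ll T^{1+\varepsilon}/M$; I then insert a smooth dyadic partition $\tau\asymp\Upsilon$. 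On the range $|\tau|\ll T^\varepsilon$, where stationary phase is unavailable, I estimate trivially (the $\xi$-integral there has length $\ll MT^{\varepsilon}/T$), getting $I_0^\pm(x)\ll N^{1/2}x^{-1/2}T^{\varepsilon}/T$. For $T^\varepsilon\ll\Upsilon\ll T^{1+\varepsilon}/M$, the phase $\Phi^\pm(\tau)+\tau\log(Nx\xi)$ has a unique stationary point $\tau_0=\tau_0(\xi)$, solving $(\Phi^\pm)'(\tau_0)=-\log(Nx\xi)$ and satisfying $\tau_0\asymp Nx\xi/T^2$; after rescaling $\tau=\Upsilon\sigma$ (so the rescaled phase has second derivative $\asymp\Upsilon\gg1$) I apply Lemma \ref{lemma:stationary_phase}. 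This produces the factor $\Upsilon^{1/2}$, the phase $h_\xi(x)=\Phi^\pm(\tau_0(\xi))+\tau_0(\xi)\log(Nx\xi)$, and, from the dyadic cutoff $V(\tau/\Upsilon)$ evaluated at $\tau=\tau_0(\xi)$, the weight $V(\eta_0+\eta_1+\eta_2)$, where $\eta_0+\eta_1+\eta_2$ is the leading part of the expansion of $\tau_0(\xi)/\Upsilon$ in powers of $\tau_0/T$; in particular $I_\Upsilon^\pm(x)$ is non-negligible only for $\Upsilon\asymp Nx/T^2$. Truncating the Stirling expansion of $G^\pm$ and the expansions of $\tau_0(\xi)$ and $h_\xi(x)$ at a suitable finite order produces exactly the recorded error $O(N^{1/2}x^{-1/2}\Upsilon^{15/2}/T^7)$.

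\emph{The main obstacle.} The hard part is the $v$-integral: unlike the $J$-Bessel case of Lemma \ref{lemma:W}, its effective frequency $\tfrac{2\sqrt{N\xi}}{r}-\tfrac{T}{\pi}$ may vanish, so stationary phase cannot be applied to $\tilde w_v$ directly; one must keep the Kuznetsov integral intact and evaluate it as a Fourier transform, and it is exactly the control of the cubic corrections to $\sinh$ that forces $M\ge T^{1/3+\varepsilon}$. A second, more bookkeeping difficulty is to coordinate the iterated stationary-phase analysis (in $\tau$, with $\xi$ a parameter) with the dyadic decomposition, so that each $I_\Upsilon^\pm(x)$ carries a single controllable phase $h_\xi(x)$ and is localized at $\Upsilon\asymp Nx/T^2$; in \cite{li2011bounds} and \cite{LNQ} the $K$-Bessel terms could be disposed of by trivial estimates, because their final conductor --- of size $\asymp T^3/M^3$ --- is far smaller than the $\asymp T^3/M$ occurring here, so here the argument must work harder.
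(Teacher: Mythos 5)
Your proposal is correct and follows essentially the same route as the paper: Mellin--Barnes representation plus a negative contour shift for the crude bound, then shift to $\Re s = 1/2$, treat the Kuznetsov $v$-integral as a Fourier transform of a Schwartz family $k$ (absorbing the $\sinh$-Taylor corrections, which is exactly where $M\ge T^{1/3+\varepsilon}$ is used), truncate $\tau$ to $\ll T^{1+\varepsilon}/M$, dyadically localize $\tau\asymp\Upsilon$, handle $\Upsilon\ll T^\varepsilon$ trivially, and apply stationary phase in $\tau$ on the remaining ranges. The only cosmetic differences are the order of the $v$-integration relative to the $\tau$-truncation and that you describe $k$ as ``fixed'' whereas it in fact varies with $\xi$ and $r$ (with uniformly bounded derivatives), but neither affects the argument.
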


\begin{proof}
  For any $v\ll M^{\varepsilon-1}$, we have
  \begin{align}\label{eqn:w_v<<}
    \tilde{w}_v(s)
    & = \int_{0}^{\infty}  V\left(\frac{y}{N} \right)
      e\left(-\frac{2\sqrt{y}}{r}\sinh(v)\right) y^{s-1} \dd y \nonumber \\
    & = N^{s} \int_{0}^{\infty}  V\left(\xi \right)
      e\left(-\frac{2\sqrt{N\xi}}{r}\sinh(v)\right) \xi^{s-1} \dd \xi
      \ll N^{\Re(s)}\left(\frac{T^{1+\varepsilon}/M}{1+|s|}\right)^{k},
  \end{align}
  for any $k\in\mathbb{Z}_{\geq0}$.
  By Stirling's formula we have
%  \begin{multline*}
%    G^\pm (s) = \pi^{-3/2+3s} \bigg( \frac{ \Gamma\left(\frac{1-s-2iT}{2}\right) \Gamma\left(\frac{1-s}{2}\right)\Gamma\left(\frac{1-s+2iT}{2}\right)}
%    { \Gamma\left(\frac{s-2iT}{2}\right) \Gamma\left(\frac{s}{2}\right)\Gamma\left(\frac{s+2iT}{2}\right)}
%     \pm i^{-3}   \frac{\Gamma\left(\frac{2-s-2iT}{2}\right) \Gamma\left(\frac{2-s}{2}\right)\Gamma\left(\frac{2-s+2iT}{2}\right)}
%     { \Gamma\left(\frac{1+s-2iT}{2}\right) \Gamma\left(\frac{1+s}{2}\right)\Gamma\left(\frac{1+s+2iT}{2}\right)} \bigg).
%  \end{multline*}
%  ($s=\sigma+i\tau$)
  \[
    G^\pm(\sigma+i\tau) \ll_{\sigma} (1+|\tau+2T|)^{1/2-\sigma}(1+|\tau-2T|)^{1/2-\sigma}(1+|\tau|)^{1/2-\sigma}
  \]
  Hence by shifting the contour to $\Re(s)=-A$, we get
  \begin{align*}%\label{eqn:W_v}
    W_v^\pm (x)
    & \ll \int_{\mathbb{R}}  \Big((1+|\tau+2T|) (1+|\tau-2T|) (1+|\tau|)\Big)^{1/2+A} N^{-A}\left(\frac{T^{1+\varepsilon}/M}{1+|\tau|}\right)^{k} x^{-A-1} \dd \tau  \\
    & \ll \int_{|\tau|\ll T/M} + \int_{T/M\leq |\tau|\leq T} + \int_{|v|\geq T} \\
    & \ll \Big(\frac{T^3}{M}\Big)^{A+1} N^{-A} x^{-A-1}
    + T^{2A+2} \Big(\frac{T}{M}\Big)^{A+3} N^{-A} x^{-A-1}
    + \Big(\frac{T}{M}\Big)^{3A+5} N^{-A} x^{-A-1} .
  \end{align*}
  For  $x\gg T^{-10}$, we get
  \[
    I^\pm (x) \ll \Big( \frac{T^3}{MN x} \Big)^{A} T^{100},
  \]
  for any $A>0$.

  If $x\leq T^{100}$, then by shifting the contour to $\Re(s)=1/2$, we get
  \begin{align*}
    I^\pm (x) &
    = \int_{|v|\leq \frac{M^\varepsilon}{M}} \frac{1}{2\pi} \int_{|\tau|\leq \frac{T^{1+\varepsilon}}{M}} G^\pm(1/2+i\tau) \tilde{w}_v(1/2+i\tau) x^{-1/2+i\tau} \dd \tau e\left(\frac{vT}{\pi}\right)g(Mv)\dd v \\
    & \hskip 8cm
    + O(T^{-A}).
  \end{align*}
  By \eqref{eqn:w_v<<} and a smooth partition of unity for $\tau$-integral,  we get
  \begin{align*}
    I^\pm (x)
    & = I_0^\pm(x) + \sum_{\sigma_3=\pm}\sum_{\substack{T^\varepsilon\ll \Upsilon \ll T^{1+\varepsilon}/M \\ \rm dyadic}} I_{\sigma_3,\Upsilon}^\pm(x)  + O(T^{-A}),
  \end{align*}
  where
  \begin{align*}
    I_{\sigma_3,\Upsilon}^\pm (x) &  =
  \frac{N^{1/2} x^{-1/2}}{2\pi}
    \int_{0}^{\infty}  V\left(\xi \right)
    \int_{\mathbb{R}} G^\pm(1/2+i\tau) V\left( \frac{\sigma_3\tau}{\Upsilon} \right)
    N^{i\tau} x^{i\tau} \xi^{-1/2+i\tau} \\
    & \hskip 1cm \cdot \int_{|v|\leq \frac{M^\varepsilon}{M}}
      e\left(-\frac{2\sqrt{N\xi}}{r}\sinh(v)\right) e\left(\frac{vT}{\pi}\right)g(Mv)\dd v  \dd \tau \dd \xi
  \end{align*}
  and
  \begin{align*}
    I_{0}^\pm (x) &  =
  \frac{N^{1/2} x^{-1/2}}{2\pi}
    \int_{0}^{\infty}  V\left(\xi \right)
    \int_{\mathbb{R}} G^\pm(1/2+i\tau) U\left( \frac{\tau}{T^{\varepsilon}} \right)
    N^{i\tau} x^{i\tau} \xi^{-1/2+i\tau} \\
    & \hskip 1cm \cdot \int_{|v|\leq \frac{M^\varepsilon}{M}}
      e\left(-\frac{2\sqrt{N\xi}}{r}\sinh(v)\right) e\left(\frac{vT}{\pi}\right)g(Mv)\dd v  \dd \tau \dd \xi
  \end{align*}
  for some $U$ being a smooth function such that $U(u)=1$ if $u\in[-1,1]$ and $U(u)=0$ if $|u|\geq2$ and $U^{(j)}(u)\ll_j 1$ for all $u\in\mathbb{R}$.

  By the Taylor expansion of $\sinh$ we get
  \begin{align*}
    I_{\sigma_3,\Upsilon}^\pm (x) &
    = \frac{N^{1/2} x^{-1/2}}{2\pi}
    \int_{0}^{\infty}  V\left(\xi \right)
    \int_{\mathbb{R}} G^\pm(1/2+i\tau) V\left( \frac{\sigma_3\tau}{\Upsilon} \right)
    N^{i\tau} x^{i\tau} \xi^{-1/2+i\tau} \\
    & \hskip 0cm \cdot \int_{|v|\leq \frac{M^\varepsilon}{M}}
      e\left(\frac{vT}{\pi}-\frac{2\sqrt{N\xi}}{r}v \right) e\left(-\frac{2\sqrt{N\xi}}{r}\sum_{1\leq j\leq J} c_j v^{2j+1}\right)g(Mv)\dd v  \dd \tau \dd \xi  + O(T^{-2022}),
  \end{align*}
  for some large enough $J$.
  Since $g^{(j)}(y)\ll_{j,A} (1+|y|)^{-A}$, we can extend the $v$ integral to $\mathbb{R}$ with a negligibly small error. By making a change of variable $Mv \rightarrow v$, we get
  \begin{align*}
    I_{\sigma_3,\Upsilon}^\pm (x) &
    = \frac{N^{1/2} x^{-1/2}}{2\pi M}
    \int_{0}^{\infty}  V\left(\xi \right)
    \int_{\mathbb{R}} G^\pm(1/2+i\tau)  V\left( \frac{\sigma_3\tau}{\Upsilon} \right)
    N^{i\tau} x^{i\tau} \xi^{-1/2+i\tau} \\
    & \hskip 0.5cm \cdot \int_{\mathbb{R}}
      e\left(\frac{vT}{\pi M}-\frac{2\sqrt{N\xi}}{r M}v \right) e\left(-\frac{2\sqrt{N\xi}}{r}\sum_{1\leq j\leq J} c_j \frac{v^{2j+1}}{M^{2j+1}}\right)g(v)\dd v  \dd \tau \dd \xi  + O(T^{-2022}).
  \end{align*}
  Let
  \begin{equation}\label{eqn:k=}
    k(v)=e\left(-\frac{2\sqrt{N\xi}}{r}\sum_{1\leq j\leq J} c_j \frac{v^{2j+1}}{M^{2j+1}}\right)g(v) .
  \end{equation}
  Since $M\geq T^{1/3+\varepsilon}$ and $\frac{\sqrt{N}}{r} \asymp T$, we know that $k$ is a Schwartz function. So
  \begin{align}\label{eqn:I_U}
    I_{\sigma_3,\Upsilon}^\pm (x) &
    = \frac{N^{1/2} x^{-1/2}}{2\pi M}
    \int_{0}^{\infty}  V\left(\xi \right)\xi^{-1/2}  \hat{k}\left(\frac{2\sqrt{N\xi}}{r M} -\frac{T}{\pi M} \right)  \\
    & \hskip 3cm \cdot
    \int_{\mathbb{R}} G^\pm(1/2+i\tau) V\left( \frac{\sigma_3\tau}{\Upsilon} \right)
    (N x \xi)^{i\tau}  \dd \tau \dd \xi  + O(T^{-2022}) \nonumber.
  \end{align}
%  \begin{multline}
%    G^\pm(1/2+i\tau) =
%    \exp\bigg( -i(\tau+2T)\log \frac{|\tau+2T|}{e} -i(\tau-2T)\log \frac{|\tau-2T|}{e} \\
%    - i\tau \log \frac{|\tau|}{(2\pi)^{3} e} \bigg)
%    \exp\left(\mp \frac{3\pi \tau}{2} -\frac{3\pi |\tau|}{2}\right)
%    \left( w_{J}(\tau) + O_{J}(|\tau|^{-J}) \right)
%    + O\left( \exp\Big(-\frac{\pi |\tau|}{2} \Big) \right) .
%  \end{multline}
  By \eqref{eqn:G==}, if $\sigma_3=\pm$ then we have the $\tau$-integral
  \[
    J(x) = \int_{\mathbb{R}} G^\pm(1/2+i\tau) V\left( \frac{\sigma_3\tau}{\Upsilon} \right)
    (N \xi x)^{i\tau}  \dd \tau \ll T^{-A};
  \]
  and if $\sigma_3=\mp$ then we have
  \begin{multline*}
    J(x) = \int_{\mathbb{R}} \exp\bigg( -i(\tau+2T)\log \frac{\tau+2T}{e} -i(\tau-2T)\log \frac{2T-\tau}{e} \\
    - i\tau \log \frac{\mp \tau}{(2\pi)^{3} e} \bigg)
    w_{J}(\tau)
    V\left( \frac{\mp\tau}{\Upsilon} \right)
    (N \xi x)^{i\tau}  \dd \tau + O(T^{-A}).
  \end{multline*}
  Making a change of variable $\tau=\mp \Upsilon \eta$, we get
  \begin{multline*}
    J(x) = \Upsilon \int_{\mathbb{R}} w_{J}(\mp \Upsilon \eta)
    V\left( \eta \right)
    \exp\bigg( -i(2T \mp \Upsilon \eta)\log \frac{2T\mp \Upsilon \eta}{e} \\
    -i(\mp \Upsilon \eta-2T)\log \frac{2T\pm \Upsilon \eta}{e}
    - i (\mp \Upsilon \eta) \log \frac{ \Upsilon \eta}{(2\pi)^{3} e}
     \mp i \Upsilon \eta \log (N \xi x) \bigg) \dd \eta + O(T^{-A}).
  \end{multline*}
  Let
  \begin{multline*}
    h_6(\eta) = -(2T \mp \Upsilon \eta)\log \frac{2T\mp \Upsilon \eta}{e} \\
    -(\mp \Upsilon \eta-2T)\log \frac{2T\pm \Upsilon \eta}{e}
    -  (\mp \Upsilon \eta) \log \frac{ \Upsilon \eta}{(2\pi)^{3} e}
     \mp  \Upsilon \eta \log (N \xi x)
     \\
    = \pm \Upsilon \eta \log  \frac{\Upsilon \eta (4T^2 - \Upsilon^2 \eta^2)} {(2\pi)^{3} e^3 N \xi x}
    + 2T \log \frac{2T\pm \Upsilon\eta}{2T\mp \Upsilon\eta}.
  \end{multline*}
  Then we have
  \begin{multline*}
    h_6'(\eta) = \pm \Upsilon \log (2T\mp \Upsilon \eta )
    \pm \Upsilon  \log (2T\pm \Upsilon \eta)
    \pm \Upsilon  \log \frac{ \Upsilon \eta}{(2\pi)^{3} }
     \mp  \Upsilon\log (N \xi x)
     \\
    = \pm \Upsilon  \log  \frac{\Upsilon \eta (4T^2 - \Upsilon^2 \eta^2)}{(2\pi)^{3}   N \xi x}.
  \end{multline*}
  \begin{equation*}
    h_6''(\eta) = \pm \Upsilon \frac{\mp \Upsilon}{2T\mp \Upsilon \eta}
    \pm \Upsilon \frac{\pm \Upsilon}{2T\pm \Upsilon \eta}
    \pm \Upsilon \frac{1}{\eta} , \quad
    h^{(j)}(\eta) \asymp \Upsilon, \quad j\geq2.
  \end{equation*}
  So we have
  $J(x) \ll T^{-A}$ unless $Nx\asymp T^2 \Upsilon$. Hence $I_{\sigma_3,\Upsilon}^\pm (x) \ll T^{-A}$ unless $\sigma_3=\mp$ and $Nx\asymp T^2 \Upsilon$, in which case we have  the solution of $h_6'(\eta)=0$ is
  \[
    \eta = \eta_0 + \eta_1 + \eta_2+ \eta_3,
  \]
  where
  \begin{equation} \label{eqn:eta}
  \begin{split}
    \eta_0 & = \frac{2\pi^3 N\xi x}{T^2 \Upsilon}, \\
    \eta_1 & = \frac{8\pi^3 N\xi x}{4 T^2 \Upsilon} \frac{\Upsilon^3}{(2\pi)^3 N\xi x} \eta_0^3
    = \frac{\Upsilon^2}{4T^2} \eta_0^3, \\
    \eta_2 & =  \frac{\Upsilon^2}{4T^2} 3 \eta_0 \eta_1 = \frac{3 \Upsilon^4}{16T^4} \eta_0^4, \\
    \eta_3 &  \ll \frac{\Upsilon^6}{T^6}.
  \end{split}
  \end{equation}
  By a Taylor series expansion, we have
  \[
    h_6( \eta_0 + \eta_1 + \eta_2 + \eta_3) =
    h_\xi(x)
    + O\left(\frac{\Upsilon^7}{T^6}\right),
  \]
  where
  \begin{equation}\label{eqn:h_xi}
    h_\xi(x) = \mp   \Upsilon (\eta_0 + \eta_1 + \eta_2)
    \pm \frac{2}{3} \frac{\Upsilon^3 (\eta_0^3+3\eta_0\eta_1)}{4T^2}
    \pm \frac{2}{5} \frac{\Upsilon^5 \eta_0^5}{(2T)^4}.
  \end{equation}
  By the stationary phase method (Lemma \ref{lemma:stationary_phase}), we have
  \[
    J(x) = \Upsilon^{1/2} V(\eta_0 + \eta_1 +\eta_2 ) e^{i h_\xi(x)} \left(1+O\Big(\frac{\Upsilon^7}{T^6}\Big)\right) + O(T^{-A}).
  \]
  Hence we have
  \begin{align}  %\label{eqn:I_U=}
    I_{\sigma_3,\Upsilon}^\pm (x) &
    = \frac{N^{1/2} x^{-1/2}}{2\pi M}
    \int_{0}^{\infty}  V\left(\xi \right)\xi^{-1/2}  \hat{k}\left(\frac{2\sqrt{N\xi}}{r M} -\frac{T}{\pi M} \right)  \\
    & \hskip 1cm \cdot
      \Upsilon^{1/2} V(\eta_0 + \eta_1 +\eta_2 ) e^{i h_\xi(x)} \left(1+O\Big(\frac{\Upsilon^7}{T^6}\Big)\right) \dd \xi  + O(T^{-2022}) \nonumber.
  \end{align}
  Since $\hat{k}$ is a Schwartz function, we know that the contribution from $\frac{2\sqrt{N\xi}}{r M} -\frac{T}{\pi M} \gg T^{\varepsilon}$ is negligible. Note that
  \[
    \frac{2\sqrt{N\xi}}{r M}  \xi^{1/2} -\frac{T}{\pi M}
%    = \frac{ \frac{4N}{r^2M^2} \xi - \frac{T^2}{\pi^2 M^2} }{\frac{2\sqrt{N\xi}}{r M}  \xi^{1/2} + \frac{T}{\pi M}}
%   =  \frac{ \xi - \frac{T^2 r^2 } {4\pi^2 N} }{\frac{2\sqrt{N\xi}}{r M}  \xi^{1/2} \frac{r^2M^2} {4N} + \frac{T}{\pi M} \frac{r^2M^2} {4N}}
    \ll T^{\varepsilon}
  \]
  is equivalent to
  \begin{equation}\label{eqn:xi}
    \xi - \frac{T^2 r^2 } {4\pi^2 N}  \ll \frac{T^\varepsilon M}{T}.
  \end{equation}
  Hence we get
  \begin{align}  %\label{eqn:I_U=}
    I_{\sigma_3,\Upsilon}^\pm (x) &
    = \frac{N^{1/2} x^{-1/2} \Upsilon^{1/2}}{2\pi M}
    \int_{0}^{\infty}  V\left(\xi \right)\xi^{-1/2}
    \hat{k}\left(\frac{2\sqrt{N\xi}}{r M} -\frac{T}{\pi M} \right)  \\
    & \hskip 1cm \cdot
    V(\eta_0 + \eta_1 +\eta_2 ) e^{ih_\xi(x)} \dd \xi
    + O\left( \frac{N^{1/2} x^{-1/2}  \Upsilon^{15/2}}{T^{7}}\right) \nonumber.
  \end{align}

  By a similar argument as in \eqref{eqn:I_U}, we have
  \begin{align*}
    I_{0}^\pm  (x) &
    = \frac{N^{1/2} x^{-1/2}}{2\pi M}
    \int_{0}^{\infty}  V\left(\xi \right) \xi^{-1/2}
    \hat{k}\left(\frac{2\sqrt{N\xi}}{r M} -\frac{T}{\pi M} \right) \\
    & \hskip 3cm \cdot
    \int_{\mathbb{R}} G^\pm(1/2+i\tau) U\left( \frac{\tau}{T^{\varepsilon}} \right)
    (N  x  \xi)^{ i\tau} \dd \tau \dd \xi  + O(T^{-2022}).
  \end{align*}
  Hence by \eqref{eqn:xi} we get
  \begin{align*}
    I_0^\pm (x) &
    \ll \frac{N^{1/2} x^{-1/2}}{T} T^\varepsilon.
  \end{align*}
  This completes the proof.
\end{proof}

By  Lemma \ref{lemma:I-} we know that the contribution from the terms with
\[
  \frac{n}{r^3} \gg \frac{T^{3+\varepsilon}}{MN},
  \quad \textrm{that is,} \quad
  n \gg \frac{N^{1/2} T^{\varepsilon}}{M}
\]
is negligibly small.
By \eqref{eqn:R-<<sum} and Lemma \ref{lemma:I-}, we get the contribution from $I_0^\pm(x)$
 to  $\mathcal{R}^{-}(N)$ is
\begin{align*} \label{eqn:R0-<<sum}
  \mathcal{R}_0^{-}(N)
   &  \ll  \frac{MT}{N^{1/2}} \sum_{r\asymp \frac{\sqrt{N}}{T} }\frac{1}{r^2}
  \sum_{n\leq \frac{N^{1/2} T^{\varepsilon}}{M}}  |A(1,n)|  \frac{N^{1/2}}{T} \frac{r^{3/2}}{n^{1/2}}
   + O(T^{-A}).
\end{align*}
By the Rankin--Selberg bound \eqref{eqn:RS3}, we get
\begin{equation} \label{eqn:R-<<}
  \mathcal{R}_0^{-}(N)
    %\ll  T^{\varepsilon} M \frac{N^{1/4}}{T^{1/2}} \frac{N^{1/4}}{M^{1/2}}
    \ll \frac{N^{1/2+\varepsilon} M^{1/2}}{T^{1/2}} \ll TM.
\end{equation}
Similarly, we get the same bounds for the contribution from the error terms in $I_\Upsilon^\pm(x)$.

By \eqref{eqn:R-<<sum} and Lemma \ref{lemma:I-}, we have the contribution from the first term in $I_\Upsilon^\pm(x)$ is
\begin{align} \label{eqn:R-<<sum_Upsilon}
  \mathcal{R}_{\Upsilon}^{-}(N)
  &  \ll  \frac{MT}{N^{1/2}} \sum_{r\asymp \frac{\sqrt{N}}{T} }\frac{1}{r^{1/2}} \sum_{\pm }
    \bigg| \frac{N^{1/2} \Upsilon^{1/2}}{M}
    \int_{0}^{\infty}  V\left(\xi \right)\xi^{-1/2}
    \hat{k}\left(\frac{2\sqrt{N\xi}}{r M} -\frac{T}{\pi M} \right)  \\
    & \hskip 1cm \cdot
  \sum_{n\geq1}  A(1,n)  e\left(\pm \frac{n}{r}\right) n^{-1/2} V(\eta_0 + \eta_1 +\eta_2 ) e^{i h_\xi(n/r^3)} \dd \xi   \bigg| . \nonumber
\end{align}

We first consider the $n$-sum above.
Note that we have
\[
  n\asymp \frac{T^2 \Upsilon r^3}{N} \asymp \frac{N^{1/2} \Upsilon}{T},
  \quad
  \frac{n}{r} \asymp  \Upsilon ,
  \quad
  h_\xi(n/r^3) \asymp \Upsilon.
\]
This suggests to us viewing $e\left(\pm \frac{n}{r}\right)$ as an analytic weight function in  $n$ instead of an additive character. By Lemma \ref{lemma:FE3} with $w(n)=e\left(\pm \frac{n}{r}\right) (\frac{N^{1/2} \Upsilon}{T})^{1/2} n^{-1/2} V(\eta_0 + \eta_1 +\eta_2 ) e^{i h_\xi(n/r^3)}$ and  exactly the same arguments as in Lemma \ref{lemma:W1}, we have
\[
  \sum_{n\geq1}  A(1,n)  e\left(\pm \frac{n}{r}\right) n^{-1/2} V(\eta_0 + \eta_1 +\eta_2 ) e^{i h_\xi(n/r^3)}
  \ll \Upsilon^{3/2}.
\]
Hence by \eqref{eqn:xi}, we have
\[
  \mathcal{R}_{\Upsilon}^{-}(N) \ll T^\varepsilon
  \frac{MT}{N^{1/2}}  \left(\frac{\sqrt{N}}{T} \right)^{1/2}
     \frac{N^{1/2} \Upsilon^{1/2}}{M}  \frac{M}{T} \left(\frac{N^{1/2} \Upsilon}{T}\right)^{-1/2} \Upsilon^{3/2}.
\]
Recall that $\Upsilon \ll T^{1+\varepsilon}/M$ and $M\geq T^{1/3+\varepsilon}$. So we have
\begin{equation}\label{eqn:R-<<}
  \mathcal{R}_{\Upsilon}^{-}(N) \ll
  \frac{T^{3/2+\varepsilon}}{M^{1/2}} \ll T^{1+\varepsilon} M.
\end{equation}

By \eqref{eqn:S2Skl}, \eqref{eqn:Skl=D+R}, \eqref{eqn:D}, \eqref{eqn:R<<R(N)}, \eqref{eqn:R+<<}, and \eqref{eqn:R-<<}, we complete the proof of Theorem \ref{thm:moment_L_functions_6}.

%%%%%%%%%%%%%%%%%%%%%%%%%%%%%%%%%%%%%%%%%%%%%%%%%%%%%%%%%%%%%%%%
%%%%%                        Section                       %%%%%
%%%%%%%%%%%%%%%%%%%%%%%%%%%%%%%%%%%%%%%%%%%%%%%%%%%%%%%%%%%%%%%%

\section{Mixed moment of $L$-functions}
\label{sec:mixed_moment_L_functions}

In this section, we will prove Theorem \ref{thm:moment_L_functions_2+8}.
Let $\phi_j$ be a Hecke--Maass cusp form with the spectral parameter $t_j>0$ and the Fourier coefficients $\lambda_j(n)$. The $L$-function of $\phi_j$ is defined by
\[L(s,\phi_j)= \sum_{n\geq1} \frac{\lambda_j(n)}{n^s} = \prod_{p} \prod_{\pm}\left(1-\frac{\alpha_{j}(p)^{\pm1}}{p^s}\right)^{-1}, \quad
\Re(s)>1.\]
%Similarly for $L(s,\phi)$.
The triple product $L$-function is given by
\[
  L(s,\phi_j\times \phi\times \phi_k) = \prod_p \prod_{\pm_1} \prod_{\pm_2}\prod_{\pm_3} \left( 1- \frac{\alpha_j(p)^{\pm_1 1} \alpha_\phi(p)^{\pm_2 1}\alpha_k(p)^{\pm_3 1}}{p^s} \right)^{-1}, \quad \Re(s)>1.
\]
These $L$-functions have analytic continuations to the whole complex plane.

By the Cauchy--Schwarz inequality and the following well known bound
\[
    \sum_{T-T^\varepsilon \leq t_j \leq T+T^\varepsilon} L(1/2,\phi_j)^2  \ll T^{1+\varepsilon},
\]
to prove Theorem \ref{thm:moment_L_functions_2+8}, it suffices to prove the following
\[
    \sum_{T-T^\varepsilon\leq t_j \leq T+T^\varepsilon}  L(1/2,\phi_j \times \phi\times \phi_k)^2 \ll T^{2+\varepsilon}.
\]

By Ramakrishnan \cite{Ramakrishnan}, we know that $\Phi=\phi\times \phi_k$ is a self-dual Hecke--Maass cusp form for $\SL(4,\mathbb{Z})$ when $\phi\neq \phi_k$. Let $A(l,m,n)$ be the normalized Fourier coefficients of $\Phi$. Then we have (\emph{cf.}\ Goldfeld \cite[\S 12.3]{Goldfeld})
\[
  L(s,\phi_j \times \Phi) = \sum_{m,n\geq1} \frac{A(1,m,n)\lambda_j(n)}{(m^2 n)^s}, \quad \Re(s)>1.
\]
Assume that $\phi_j$ is even and the root numbers of $\phi$ and $\phi_k$ are equal.
The complete $L$-function is
\[
  \Lambda(s,\phi_j \times \Phi) =L_\infty(s,\phi_j \times \Phi)L(s,\phi_j \times \Phi),
\]
where
\[
  L_\infty(s,\phi_j \times \Phi) = \pi^{-4s} \prod_{\pm_1}\prod_{\pm_2}\prod_{\pm_3}\Gamma\Big(\frac{s\pm_1 it_j \pm_2 it_k \pm_3 iT}{2}\Big).
\]
We first prove the following approximate functional equation.
\begin{lemma}\label{lemma:AFE8}
  Assume $t_j = T+O( T^\varepsilon )$ and $t_k\ll T^{\varepsilon/2} $. Then we have
  \[
    L(1/2,\phi_j \times \Phi ) \ll T^{\varepsilon} \cdot \int_{-T^{\varepsilon}}^{T^{\varepsilon}}
    \Big| \sum_{m^2n\leq T^{2+\varepsilon}}  \frac{A(1,m,n)\lambda_j(n)}{(m^2 n)^{1/2}}  (m^2n)^{-\varepsilon-it} \Big| \dd t + O(T^{-2022}).
  \]
\end{lemma}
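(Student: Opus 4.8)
The plan is to apply the standard approximate functional equation of Iwaniec--Kowalski \cite[Theorem 5.3]{IwaniecKowalski2004analytic} (exactly as in the proof of Lemma \ref{lemma:AFE6}) to the degree-eight $L$-function $L(s,\phi_j\times\Phi)$, and then to unfold the resulting smooth cutoff weight into a short vertical integral, so that in the truncated Dirichlet polynomial the coefficient of $\lambda_j(n)$ becomes independent of $j$; this last feature is precisely what is needed for the spectral large sieve applied in the rest of \S\ref{sec:mixed_moment_L_functions}. Since $\phi\neq\phi_k$, by Ramakrishnan \cite{Ramakrishnan} $\Phi=\phi\times\phi_k$ is a self-dual $\SL(4,\mathbb{Z})$ cusp form, so $L(s,\phi_j\times\Phi)$ is entire, has the Dirichlet series $\sum_{m,n}A(1,m,n)\lambda_j(n)(m^2n)^{-s}$, and has the gamma factor $L_\infty(s,\phi_j\times\Phi)$ recorded above; under the hypotheses that $\phi_j$ is even and that $\phi$ and $\phi_k$ have equal root numbers, the sign of the functional equation is $+1$, so the approximate functional equation takes the symmetric form
\[
  L(1/2,\phi_j\times\Phi)=2\sum_{m,n\geq1}\frac{A(1,m,n)\lambda_j(n)}{(m^2n)^{1/2}}\,V_{t_j}(m^2n),\qquad
  V_{t_j}(y)=\frac{1}{2\pi i}\int_{(2)}\frac{L_\infty(\tfrac12+s,\phi_j\times\Phi)}{L_\infty(\tfrac12,\phi_j\times\Phi)}\,y^{-s}G(s)\,\frac{\dd s}{s},
\]
with $G(s)=e^{s^2}$.

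The decisive input is the conductor-dropping. Under $t_j=T+O(T^\varepsilon)$ and $t_k\ll T^\varepsilon$, the eight spectral parameters $\pm t_j\pm t_k\pm T$ split into four of size $\asymp T$ and four of size $\ll T^\varepsilon$, so the analytic conductor of $L(s,\phi_j\times\Phi)$ is $\asymp T^{4+o(1)}$ and the effective length is $\sqrt{\mathfrak q}\asymp T^{2+o(1)}$. Combining this with Stirling's formula \eqref{eqn:Stirling_J}, the exponential factors of the four large gamma factors (which occur in two $+2iT$ and two $-2iT$ pairs) cancel, and one obtains on the line $\Re(s)=\varepsilon$ the polynomial bound $\bigl|\tfrac{L_\infty(1/2+s,\phi_j\times\Phi)}{L_\infty(1/2,\phi_j\times\Phi)}\bigr|\ll T^{O(\varepsilon)}(1+|\Im s|)^{O(1)}$. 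Shifting the contour in $V_{t_j}$ to $\Re(s)=A$ then gives $V_{t_j}(y)\ll_A(T^{2+\varepsilon}/y)^A$, so the $(m,n)$-sum may be truncated to $m^2n\leq T^{2+\varepsilon}$ with an error $O(T^{-2022})$; and the rapid decay of $G(s)=e^{s^2}$ lets us simultaneously truncate the $s$-contour to $\Re(s)=\varepsilon$, $|\Im(s)|\leq T^\varepsilon$, again up to $O(T^{-2022})$.

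It remains to substitute the truncated contour integral for $V_{t_j}$ back into the truncated sum and to estimate crudely. Writing $s=\varepsilon+it$ with $|t|\leq T^\varepsilon$, interchanging the finite $(m,n)$-sum with the $t$-integral, and pulling out of the absolute value the factors $\bigl|\tfrac{L_\infty(1/2+\varepsilon+it,\phi_j\times\Phi)}{L_\infty(1/2,\phi_j\times\Phi)}\bigr|\ll T^{O(\varepsilon)}$, $|G(\varepsilon+it)|\ll1$, $|\varepsilon+it|^{-1}\ll_\varepsilon1$, together with the power of $\sqrt{\mathfrak q}\asymp T^{2}$ coming from the normalisation of the argument of $V_{t_j}$ (which contributes only $T^{O(\varepsilon)}$), one arrives at
\[
  L(1/2,\phi_j\times\Phi)\ll T^{\varepsilon}\int_{-T^\varepsilon}^{T^\varepsilon}\Bigl|\sum_{m^2n\leq T^{2+\varepsilon}}\frac{A(1,m,n)\lambda_j(n)}{(m^2n)^{1/2}}(m^2n)^{-\varepsilon-it}\Bigr|\,\dd t+O(T^{-2022}),
\]
after renaming $O(\varepsilon)$ as $\varepsilon$. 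The only genuinely delicate step is the uniform control of the gamma ratio on $\Re(s)=\varepsilon$, i.e.\ verifying the conductor-dropping and the cancellation of the exponential factors of the four large gamma factors; everything else is the standard approximate-functional-equation machinery together with trivial estimates.
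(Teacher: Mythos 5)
Your proposal is correct and follows essentially the same route as the paper's own proof: apply the Iwaniec--Kowalski approximate functional equation, exploit conductor-dropping (four of the eight parameters $\pm t_j\pm t_k\pm T$ are $\ll T^\varepsilon$, so the conductor is $T^{4+o(1)}$) to truncate at $m^2n\leq T^{2+\varepsilon}$, shift to $\Re(s)=\varepsilon$, use the rapid decay of $G$ to cut the contour at $|\Im s|\leq T^\varepsilon$, interchange sum and integral, and control the gamma ratio by Stirling. The only difference is presentational: you spell out the conductor-dropping and the grouping of the four large gamma factors more explicitly than the paper, which leaves these as implicit consequences of Stirling.
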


\begin{proof}
  By Iwaniec--Kowalski \cite[Theorem 5.3]{IwaniecKowalski2004analytic}, we have
  \[
    L(1/2,\phi_j \times \Phi ) = 2 \sum_{m,n\geq1}  \frac{A(1,m,n)\lambda_j(n)}{(m^2 n)^{1/2}} V(m^2n,t_j),
  \]
  where
  \[
    V(m^2n,t_j) = \frac{1}{2\pi i} \int_{(3)} \frac{L_\infty(1/2+s,\phi_j\times \Phi)}{L_\infty(1/2,\phi_j\times \Phi)} (m^2 n)^{-s} G(s) \frac{\dd s}{s}, \quad G(s)=e^{s^2}.
  \]
  Since $t_j = T+O( T^\varepsilon )$ and $t_k\ll T^{\varepsilon/2} $, by shifting the contour to the right we know that the contribution from terms with $m^2n > T^{2+\varepsilon}$ is $O(T^{-2022})$. When $m^2n \leq T^{2+\varepsilon}$, we shift the contour to $\Re(s)=\varepsilon$, then by using the rapid decay of $G(s)$ as $|\Im(s)|> T^{\varepsilon}$ and exchange the order of summations and integral, we get
  \begin{multline*}
    L(1/2,\phi_j \times \Phi ) \ll  \int_{-T^{\varepsilon}}^{T^{\varepsilon}}
    \Big| \sum_{m^2n\leq T^{2+\varepsilon}}  \frac{A(1,m,n)\lambda_j(n)}{(m^2 n)^{1/2}}  (m^2n)^{-\varepsilon-it} \Big| \\
     \cdot \Big|\frac{L_\infty(1/2+\varepsilon+it,\phi_j\times \Phi)}{L_\infty(1/2,\phi_j\times \Phi)} \Big| e^{-t^2} \dd t + O(T^{-2022}).
  \end{multline*}
  By Stirling's formula, we complete the proof of Lemma \ref{lemma:AFE8}.
\end{proof}

By Lemmas \ref{lemma:AFE8} and \ref{lemma:large_sieve}, we have
\begin{align*}
  \sum_{T-T^\varepsilon\leq t_j \leq T+T^\varepsilon}  L(1/2,\phi_j \times \Phi)^2
  & \ll T^\varepsilon  \sum_{T-T^\varepsilon\leq t_j \leq T+T^\varepsilon}
  \int_{-T^{\varepsilon}}^{T^{\varepsilon}}
    \Big| \sum_{m^2n\leq T^{2+\varepsilon}}  \frac{A(1,m,n)\lambda_j(n)}{(m^2 n)^{1/2+\varepsilon+it}}  \Big|^2 \dd t \\
    & \hskip 8cm + O(T^{-20})  \\
  & \ll T^{2+\varepsilon}
  \sum_{n\leq T^{2+\varepsilon}} \Big|\sum_{m\leq T^{1+\varepsilon}/n^{1/2}} \frac{|A(1,m,n)|}{m  n^{1/2}}\Big|^2.
\end{align*}
By the Cauchy--Schwarz inequality, we have
\begin{align*}
  \sum_{T-T^\varepsilon\leq t_j \leq T+T^\varepsilon}  L(1/2,\phi_j \times \Phi)^2
  & \ll T^{2+\varepsilon}
  \sum_{n\leq T^{2+\varepsilon}}  \sum_{m\leq T^{1+\varepsilon}/n^{1/2}} \frac{|A(1,m,n)|^2}{m  n } .
\end{align*}
We will use the following lemma about the Ramanujan conjecture on average.

\begin{lemma}\label{lemma:Ramanujan}
  Let $\Phi$ be any Hecke--Maass cusp form for $\SL(4,\mathbb{Z})$, with normalized Fourier coefficients $A(l,m,n)$.
  Assume the spectral parameters of $\Phi$ are bounded by $T$. Then for $M,N\geq1$, we have
  \[
    \sum_{m\sim M} \sum_{n\sim N} |A(1,m,n)|^2 \ll (TMN)^\varepsilon MN.
  \]
\end{lemma}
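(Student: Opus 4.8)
The plan is to treat Lemma \ref{lemma:Ramanujan} as a two-dimensional Rankin--Selberg estimate, the only genuinely delicate issue being uniformity in the (polynomially large) conductor of $\Phi$. First I would collect the analytic inputs attached to $\Phi$: the Rankin--Selberg $L$-function $L(s,\Phi\times\widetilde\Phi)$ has a simple pole at $s=1$, non-negative Dirichlet coefficients, analytic conductor $\ll T^{O(1)}$, and, crucially, residue at $s=1$ of size $\ll_\varepsilon T^\varepsilon$ --- in the case $\Phi=\phi\times\phi_k$ this comes from the factorization $L(s,\Phi\times\widetilde\Phi)=\zeta(s)L(s,\Sym^2\phi)L(s,\Sym^2\phi_k)L(s,\Sym^2\phi\times\Sym^2\phi_k)$ together with the standard bounds $L(1,\Sym^2\phi),L(1,\Sym^2\phi_k),L(1,\Sym^2\phi\times\Sym^2\phi_k)\ll T^\varepsilon$, while for a general $\Phi$ one quotes the known estimate for $\Res_{s=1}L(s,\Phi\times\widetilde\Phi)$ in terms of $C(\Phi)^\varepsilon$. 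The same package (with possibly a higher-order pole, of no consequence) should be recorded for $L(s,\wedge^2\Phi\times\widetilde{\wedge^2\Phi})$; note $\wedge^2(\phi\times\phi_k)$ is the isobaric sum $\Sym^2\phi\oplus\Sym^2\phi_k$.

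The first step is to reduce the double sum to one-dimensional ones. By the Hecke relations for $\SL(4,\mathbb Z)$ each $A(1,m,n)$ is a divisor sum of products of the fundamental Hecke eigenvalues of $\Phi$ (those of $\Phi$, $\widetilde\Phi$, and $\wedge^2\Phi$), of exactly the type that underlies the $\GL(3)$ bound \eqref{eqn:RS3} via $A(m,n)=\sum_{d\mid(m,n)}\mu(d)A(m/d,1)A(1,n/d)$. Applying Cauchy--Schwarz to this combination and carrying out the (convergent) sum over the divisor parameter bounds $\sum_{m\sim M}\sum_{n\sim N}|A(1,m,n)|^2$ by $(MNT)^\varepsilon$ times a sum of products of one-dimensional second moments $\sum_{\ell\sim L}|\lambda_\Psi(\ell)|^2$ with $\Psi\in\{\Phi,\wedge^2\Phi\}$ and the relevant lengths multiplying up to $\ll M$ and $\ll N$; it thus suffices to prove $\sum_{\ell\le L}|\lambda_\Psi(\ell)|^2\ll (LT)^\varepsilon L$ for every such $\Psi$ and every $L\ge1$. (For $\Phi=\phi\times\phi_k$ one may instead expand $A(1,m,n)$ directly in the Satake parameters $\alpha_\phi(p)^{\pm}\alpha_k(p)^{\pm}$ by Littlewood--Richardson and reduce straight to $\GL(2)$ Rankin--Selberg for $\phi$ and $\phi_k$, which is cleaner.)

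For the one-dimensional bound I would split on the length. When $L\le T^{\delta}$, with $\delta>0$ a small absolute constant, use the pointwise Jacquet--Shalika bound $|\lambda_\Psi(\ell)|\ll_\varepsilon \ell^{\theta+\varepsilon}$ with $\theta<1/2$ (for $\GL(2)$ one even has $\theta\le 7/64$ by Kim--Sarnak), so $\sum_{\ell\sim L}|\lambda_\Psi(\ell)|^2\ll L^{1+2\theta+\varepsilon}\ll L\cdot T^{\delta(2\theta+\varepsilon)}\ll (LT)^\varepsilon L$. When $L>T^{\delta}$, write $|\lambda_\Psi(\ell)|^2\le b_\ell$ with $b_\ell\ge0$ the Dirichlet coefficients of $L(s,\Psi\times\widetilde\Psi)$ (positivity of the local factors) and apply truncated Perron, shifting the contour only to the edge of the standard zero-free region $\Re(s)=1-c/\log(C(\Psi)(2+|t|))$ --- never to the critical line, so that no convexity bound is invoked. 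The pole at $s=1$ contributes $\ll T^\varepsilon L$, and the shifted integral, bounded by the classical estimate $|L(s,\Psi\times\widetilde\Psi)|\ll(\log T)^{O(1)}(2+|t|)^{O(1)}$ valid throughout the zero-free region, contributes $\ll L^{1-c/\log T}(\log T)^{O(1)}\ll L\,(\log T)^{O(1)}$, since $\log L\ge\delta\log T$ keeps $L^{-c/\log T}$ bounded away from $0$. In both ranges $\sum_{\ell\le L}|\lambda_\Psi(\ell)|^2\ll (LT)^\varepsilon L$, and substituting back through the first step finishes the proof.

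The main obstacle is precisely this conductor uniformity for intermediate lengths $T^\delta\le L\le T^{O(1)}$: a careless Perron shift to $\Re(s)=1/2+\varepsilon$ would cost a fixed positive power of the conductor of $L(s,\Psi\times\widetilde\Psi)$, which is only $\ll T^{O(1)}$, and this would destroy the bound. Staying inside the zero-free region, combined with the cheap residue bound $\Res_{s=1}L(s,\Phi\times\widetilde\Phi)\ll T^\varepsilon$ and the absolute exponent $\theta<1/2$ that handles the very short sums where the zero-free region is too thin to help, is what pushes the argument through; everything else is routine Rankin--Selberg bookkeeping of the kind behind \eqref{eqn:RS3}.
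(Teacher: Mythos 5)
Your proposal correctly reconstructs the argument that the paper outsources to Chandee--Li's Lemma 2.2 (and their \S 3): reduce the two-dimensional sum to one-dimensional Rankin--Selberg sums for $\Phi$ and $\wedge^2\Phi$ via the $\GL(4)$ Hecke relations, use Kim's automorphy of $\wedge^2\Phi$ so that the Rankin--Selberg theory applies, and control the $\Phi$-dependence through the residue bound $\Res_{s=1}L(s,\Phi\times\widetilde\Phi)\ll T^\varepsilon$ (which in the paper is cited as Li's ``edge of the critical strip'' bounds). This is the same route as the paper, just written out rather than cited, so there is no substantive divergence.

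Two small remarks on the write-up. First, the constant $\delta$ in your length split cannot be absolute: for $L=T^\delta$ you obtain $L^{1+2\theta+\varepsilon}\ll L\,T^{\delta(2\theta+\varepsilon)}$, and this is $\ll (LT)^\varepsilon L$ only if $\delta\le\varepsilon/(2\theta+\varepsilon)$, so $\delta$ must be taken as a function of the target $\varepsilon$ --- which is harmless, but should be stated. Second, the bound $|L(s,\Psi\times\widetilde\Psi)|\ll(\log T)^{O(1)}(2+|t|)^{O(1)}$ on the line $\Re(s)=1-c/\log(C(\Psi)(2+|t|))$ is in fact already a Phragm\'en--Lindel\"of (convexity) consequence, since $C^{(1-\sigma)/2}$ is $O(1)$ at that abscissa; you do not need to invoke the zero-free region per se, only the polynomial conductor bound $C(\Psi\times\widetilde\Psi)\ll T^{O(1)}$. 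In fact, with the smoothed Mellin transform and this observation the contour shift to $\Re(s)=1-c/\log(CT)$ works uniformly for all $L\ge1$, so the short/long split is not strictly necessary, though it certainly does no harm.
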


\begin{proof}
  The proof of this is same as Chandee--Li \cite[Lemma 2.2]{CL} with additional information that the dependence on $\Phi$ for the averages of Fourier coefficients in \cite[\S3]{CL} is $O(T^\varepsilon)$ (see e.g.\ \cite{li2010upper}). Here we have used the fact  that the exterior square $\wedge^2 \Phi$ is essentially automorphic by the work of Kim \cite{Kim}.
  See \cite[Lemma 3.2]{CL}.
\end{proof}

By using dyadic decomposition of intervals and Lemma \ref{lemma:Ramanujan}, we have
\begin{align*}
  \sum_{T-T^\varepsilon\leq t_j \leq T+T^\varepsilon}  L(1/2,\phi_j \times \Phi)^2
  & \ll T^{2+\varepsilon}  .
\end{align*}
This completes the proof of Theorem \ref{thm:moment_L_functions_2+8}.

%%%%%%%%%%%%%%%%%%%%%%%%%%%%%%%%%%%%%%%%%%%%%%%%%%%%%%%%%%%%%%%%
%%%%%                        Section                       %%%%%
%%%%%%%%%%%%%%%%%%%%%%%%%%%%%%%%%%%%%%%%%%%%%%%%%%%%%%%%%%%%%%%%

\section{Cubic moment of Hecke--Maass cusp forms} %: Proof of Theorem \ref{thm:HM2}
\label{sec:cubic_moment}

%Recall that for any $f\in L^2(\mathbb{X})$, we have the Selberg spectral decomposition
%\[
%  f(z) = \langle f, \frac{3}{\pi} \rangle   +
%  \sum_{j\geq1} \langle f, \phi_j \rangle  \phi_j(z)
%  + \frac{1}{4\pi} \int_{\mathbb{R}}
%  \langle f, E(\cdot,1/2+it) \rangle  E(z,1/2+it)  \dd t.
%\]

In this section, we will prove Theorems \ref{thm:HM} and \ref{thm:HM2}. We can take $\phi$ to be real valued. We use $\{\phi_j\}$ for an orthonormal basis of Hecke--Maass cusp forms  to highlight $\phi$. Assume $\phi\in \{\phi_j\}$.
Let
\[
  \mathcal{M}_k := \int_{\mathbb X} \phi_k(z) \phi(z)^3 \frac{\dd x \dd y}{y^2}
  = \langle \phi_k \phi , \phi^2 \rangle
\]
and
\[
  \mathcal{M}_t := \int_{\mathbb X} E_t(z) \phi(z)^3 \frac{\dd x \dd y}{y^2}
  = \langle  E_t \phi, \phi^2 \rangle.
\]
By Parseval's formula we get
\begin{equation}\label{eqn:Mk=Mkc+Mke}
  \mathcal{M}_k
  = \langle    \phi_k \phi,1\rangle
  \langle \frac{3}{\pi}, \phi^2  \rangle
  +
  \mathcal{M}_{k,c}
  + \mathcal{M}_{k,e}
\end{equation}
and
\begin{equation}\label{eqn:Mt=Mtc+Mte}
  \mathcal{M}_t
  = \langle E_t \phi,1 \rangle \langle  \frac{3}{\pi},\phi^2  \rangle
  +\mathcal{M}_{t,c}
  + \mathcal{M}_{t,e},
\end{equation}
where
\begin{align*}
  \mathcal{M}_{k,c} & = \sideset{}{'}\sum_{j\geq1} \langle \phi_k \phi,  \phi_j \rangle
  \langle \phi_j ,\phi^2  \rangle ,  \quad
  \mathcal{M}_{k,e}  = \frac{1}{4\pi} \int_{\mathbb{R}}
  \langle \phi_k \phi , E_\tau \rangle \langle  E_\tau, \phi^2\rangle \dd \tau, \\
  \mathcal{M}_{t,c} & = \sideset{}{'}\sum_{j\geq1} \langle E_t  \phi , \phi_j \rangle \langle \phi_j, \phi^2 \rangle,   \quad
  \mathcal{M}_{t,e}   =\frac{1}{4\pi} \int_{\mathbb{R}}
  \langle E_t  \phi, E_\tau \rangle \langle  E_\tau, \phi^2 \rangle \dd \tau.
\end{align*}
We assume that $\lambda_k \ll \lambda_\phi^\varepsilon$ and $t\ll \lambda_\phi^\varepsilon$. So we have $\langle  \phi_k \phi, 1\rangle = \langle E_t \phi,1 \rangle =0$.
Note that $\langle \phi_k \phi,  \phi_j \rangle=0$ unless the product of the root numbers is 1. We further assume that the root number of $\phi_k$ is the same as the one of $\phi$.
To estimate the contribution from the sums and integrals above we will use the Rankin--Selberg theory and Watson's formulas to reduce them to moments of $L$-functions.

\subsection{The Rankin--Selberg theory and Watson's formula}
\label{subsec:RS&Watson}

By the unfolding method in the Rankin--Selberg theory, we get (see e.g. \cite[\S7.2]{Goldfeld})
\[
   \langle \phi_k \phi , E_\tau  \rangle
   = \frac{\rho_k(1) \rho_\phi(1) \Lambda(1/2-i\tau,\phi_k\times \phi)}{2 \Lambda(1-2i\tau)},
%   \ll \exp\Big(\frac{\pi (t_k+T)}{2}\Big) \frac{|L(1/2-i\tau,\phi_k\times \phi)|}{L(1,\Sym^2 \phi_k)^{1/2} L(1,\Sym^2 \phi)^{1/2} |\zeta(1-2i\tau)|}
\]
\[
  \langle  E_\tau, \phi^2\rangle
  = \frac{\rho_\phi(1)^2 \xi(1/2+i\tau) \Lambda(1/2+i\tau,\Sym^2 \phi)}{2 \Lambda(1+2i\tau)},
\]
\[
  \langle  E_t \phi,\phi_j\rangle
  = \frac{\rho_j(1) \rho_\phi(1) \Lambda(1/2+it,\phi_j\times \phi)}{2 \Lambda(1+2it)},
\]
\[
  \langle E_t \phi, E_\tau \rangle
  = \frac{\rho_t(1) \rho_\phi(1) \Lambda(1/2-i\tau+it,  \phi)\Lambda(1/2-i\tau-it, \phi)}{2 \Lambda(1-2i\tau)} .
\]

By Watson's formula \cite{watson2008rankin}, we have
\[
  |\langle \phi_k \phi,  \phi_j \rangle |^2
  = \frac{\Lambda(1/2,\phi_k\times \phi\times \phi_j)}{8 \Lambda(1,\Sym^2 \phi_k)\Lambda(1,\Sym^2 \phi)\Lambda(1,\Sym^2 \phi_j)}
\]
and
\[
  |\langle \phi_j ,\phi^2  \rangle|^2
  = \frac{\Lambda(1/2,\phi_j) \Lambda(1/2, \Sym^2 \phi\times \phi_j)}{8  \Lambda(1,\Sym^2 \phi)^2 \Lambda(1,\Sym^2 \phi_j)}.
\]

\subsection{Estimate of $\mathcal{M}_{k,c}$}
\label{subsec:Mkc}
By Watson's formulas above, we have
\[
  |\mathcal{M}_{k,c}|  \leq  \sum_{j\geq1}  \frac{\Lambda(1/2,\phi_k\times \phi\times \phi_j)^{1/2} \Lambda(1/2,\phi_j)^{1/2} \Lambda(1/2, \Sym^2 \phi\times \phi_j)^{1/2} } {8 \Lambda(1,\Sym^2 \phi_k)^{1/2} \Lambda(1,\Sym^2 \phi_j)  \Lambda(1,\Sym^2 \phi)^{3/2}  }.
\]
By the definition of complete $L$-functions, we have
\[
  \mathcal{M}_{k,c}  \ll  \sum_{j\geq1}  \frac{L(1/2,\phi_k\times \phi\times \phi_j)^{1/2} L(1/2,\phi_j)^{1/2} L(1/2, \Sym^2 \phi\times \phi_j)^{1/2} } {L(1,\Sym^2 \phi_k)^{1/2} L(1,\Sym^2 \phi_j)  L(1,\Sym^2 \phi)^{3/2}  } H(t_j;T,t_k),
\]
where
\[
  H(t_j;T,t_k) = \frac{L_\infty(1/2,\phi_k\times \phi\times \phi_j)^{1/2} L_\infty(1/2,\phi_j)^{1/2} L_\infty(1/2, \Sym^2 \phi\times \phi_j)^{1/2} } {L_\infty(1,\Sym^2 \phi_k)^{1/2} L_\infty(1,\Sym^2 \phi_j)  L_\infty(1,\Sym^2 \phi)^{3/2}  }.
\]
%\[
%  H(t_j;T,t_k) = \frac{ \pi^{-4} \prod_{\pm}\prod_{\pm}\prod_{\pm}\Gamma(\frac{1/2\pm it_j\pm iT\pm it_k}{2})^{1/2} \prod_{\pm}\Gamma(\frac{1/2\pm it_j}{2}) \prod_{\pm}\prod_{\pm}\Gamma(\frac{1/2\pm it_j\pm 2iT}{2})^{1/2} }
%   {\pi^{-9/2}  \Gamma(1/2)^3 \prod_{\pm}\Gamma(\frac{1\pm 2it_k}{2})^{1/2} \prod_{\pm}\Gamma(\frac{1\pm 2it_j}{2})  \prod_{\pm}\Gamma(\frac{1\pm 2iT}{2})^{3/2} }.
%\]
By Stirling's formula we have
\begin{multline*}
  H(t_j;T,t_k) \ll \exp\Big(-\frac{\pi}{2} Q(t_j;T,t_k)\Big) (t_j+T+t_k)^{-1/4} (t_j+T-t_k)^{-1/4} \\
  \cdot (1+|t_j-T+t_k|)^{-1/4}(1+|t_j-T-t_k|)^{-1/4} t_j^{-1/2} (t_j+2T)^{-1/4} (1+|t_j-2T|)^{-1/4},
\end{multline*}
with
\begin{multline}\label{eqn:Q}
    Q(t_j;T,t_k) = \frac{|t_j+T+t_k|}{2}+\frac{|t_j+T-t_k|}{2}
    + \frac{|t_j-T+t_k|}{2} + \frac{|t_j-T-t_k|}{2} \\
    + t_j + \frac{|t_j+2T|}{2} + \frac{|t_j-2T|}{2} - t_k - 2 t_j -3T.
\end{multline}
Note that for $t_k <  T$, we have
\begin{equation*}
  Q(t_j;T,t_k) = \left\{
  \begin{array}{ll}
    2t_j -3T-t_k, & \textrm{if $t_j\geq 2T$}, \\
    t_j-T-t_k, &  \textrm{if $T+t_k \leq t_j< 2T$} , \\
    0, &  \textrm{if $T-t_k \leq t_j< T+t_k$} , \\
    T-t_j-t_k, &  \textrm{if $ t_j< T-t_k$} .
  \end{array}\right.
\end{equation*}
Hence for $t_k \leq T^{\varepsilon/2}$, we have
\begin{equation*}\label{eqn:H<<1}
  H(t_j;T,t_k) \ll t_j^{-2022} T^{-2022},
\end{equation*}
if $t_j\geq T+T^{\varepsilon}$ or $t_j \leq T- T^\varepsilon$; and we have
\begin{equation}\label{eqn:H<<}
  H(t_j;T,t_k) \ll   T^{-3/2},
\end{equation}
if $|t_j-T|\leq T^\varepsilon$. So by the convexity bounds for $L$-functions we have
\begin{multline*}
  \mathcal{M}_{k,c}  \ll \frac{1}{T^{3/2}} \sum_{|t_j-T|\leq T^\varepsilon}  L(1/2,\phi_k\times \phi\times \phi_j)^{1/2} L(1/2,\phi_j)^{1/2} L(1/2, \Sym^2 \phi\times \phi_j)^{1/2}
  + O(T^{-20}).
\end{multline*}
By the Cauchy--Schwarz inequality, we have
\begin{multline*}\label{eqn:Mkc<<product}
  \mathcal{M}_{k,c}  \ll \frac{1}{T^{3/2}} \Big(\sum_{|t_j-T|\leq T^\varepsilon}  L(1/2,\phi_k\times \phi\times \phi_j)  L(1/2,\phi_j) \Big)^{1/2} \\
  \cdot
  \Big(\sum_{|t_j-T|\leq T^\varepsilon}    L(1/2, \Sym^2 \phi\times \phi_j) \Big)^{1/2}
  + O(T^{-20}).
\end{multline*}
By Theorems \ref{thm:moment_L_functions_6} and \ref{thm:moment_L_functions_2+8}, we get
\begin{equation}\label{eqn:Mkc<<}
   \mathcal{M}_{k,c} \ll T^{-3/2+3/4+\varepsilon+2/3} \ll T^{-1/12+\varepsilon}.
\end{equation}

\subsection{Estimate of $\mathcal{M}_{k,e}$}
\label{subsec:Mke}
By the formulas in \S \ref{subsec:RS&Watson}, we have
%\[
%   \langle \phi_k \phi , E_\tau  \rangle
%   = \frac{\rho_k(1) \rho_\phi(1) \Lambda(1/2-i\tau,\phi_k\times \phi)}{2 \xi(1-2i\tau)},
%%   \ll \exp\Big(\frac{\pi (t_k+T)}{2}\Big) \frac{|L(1/2-i\tau,\phi_k\times \phi)|}{L(1,\Sym^2 \phi_k)^{1/2} L(1,\Sym^2 \phi)^{1/2} |\zeta(1-2i\tau)|}
%\]
%\[
%  \langle  E_\tau, \phi^2\rangle
%  = \frac{\rho_\phi(1)^2 \xi(1/2+i\tau) \Lambda(1/2+i\tau,\Sym^2 \phi)}{2 \xi(1+2i\tau)},
%\]
%\[
%  \mathcal{M}_{k,e}  = \frac{1}{4\pi} \int_{\mathbb{R}}
%  \langle \phi_k \phi , E_\tau \rangle \langle  E_\tau, \phi^2\rangle \dd \tau
%\]
\begin{align*}
  \mathcal{M}_{k,e} & \ll  \int_{\mathbb{R}}
  |\langle \phi_k \phi , E_\tau \rangle \langle  E_\tau, \phi^2\rangle| \dd \tau \\
  & \ll \int_{\mathbb{R}} \frac{|\rho_k(1)| |\rho_\phi(1)|^3 |\Lambda(1/2-i\tau,\phi_k\times \phi) \Lambda(1/2+i\tau) \Lambda(1/2+i\tau,\Sym^2 \phi)| }{ |\xi(1-2i\tau)|^2 }
    \dd \tau.
\end{align*}
Hence \eqref{eq:rho1squared} and \eqref{eq:rho1squared-c}, we have
\begin{equation*}
  \mathcal{M}_{k,e}
  \ll \int_{\mathbb{R}} \frac{ |L(1/2-i\tau,\phi_k\times \phi) \zeta(1/2+i\tau) L(1/2+i\tau,\Sym^2 \phi)|} { L(1,\Sym^2 \phi_k)^{1/2} L(1,\Sym^2 \phi)^{3/2} |\zeta(1-2i\tau)|^2 } H_{k,e}(\tau;T,t_k)
    \dd \tau,
\end{equation*}
where
\begin{align*}
  H_{k,e}(\tau;T,t_k) & = \exp\Big(\frac{\pi (t_k+3T+2|\tau|)}{2}\Big) \\
  & \hskip 1cm \cdot
   |L_\infty(1/2-i\tau,\phi_k\times \phi) L_\infty(1/2+i\tau) L_\infty(1/2+i\tau,\Sym^2 \phi)| \\
  & \ll
  \exp\Big( -\frac{\pi}{2} Q(|\tau|;T,t_k) \Big)
  (1+|\tau+T+t_k|)^{-1/4} (1+|\tau+T-t_k|)^{-1/4} \\
  & \hskip 1cm \cdot (1+|\tau-T+t_k|)^{-1/4}(1+|\tau-T-t_k|)^{-1/4}
  (1+|\tau|)^{-1/2} \\
  & \hskip 1cm \cdot (1+|\tau+2T|)^{-1/4}(1+|\tau-2T|)^{-1/4},
\end{align*}
with $Q$ as in \eqref{eqn:Q}.
%\begin{multline*}
%  Q_{k,e}(\tau;T,t_k) = \frac{||\tau|+t_k+T|}{2}+\frac{||\tau|-t_k+T|}{2}+\frac{||\tau|+t_k-T|}{2}
%  +\frac{||\tau|-t_k-T|}{2} \\
%  +  |\tau| + \frac{||\tau|+2T|}{2}+ \frac{||\tau|-2T|}{2}
%  - (t_k+3T+2|\tau|).
%\end{multline*}
%Note that for $t_k <  T$, we have
%\begin{equation*}
%  Q_{k,e}(\tau;T,t_k) = \left\{
%  \begin{array}{ll}
%    2|\tau| -3T-t_k, & \textrm{if $|\tau|\geq 2T$}, \\
%    |\tau|-T-t_k, &  \textrm{if $T+t_k \leq |\tau|< 2T$} , \\
%    0, &  \textrm{if $T-t_k \leq |\tau|< T+t_k$} , \\
%    T-|\tau|-t_k, &  \textrm{if $ |\tau|< T-t_k$} .
%  \end{array}\right.
%\end{equation*}
 Hence  we have
\begin{equation*}
  \mathcal{M}_{k,e}
  \ll  T^{-3/2+\varepsilon}  \int_{|\tau|-T \ll T^\varepsilon}
   |L(1/2-i\tau,\phi_k\times \phi) \zeta(1/2+i\tau) L(1/2+i\tau,\Sym^2 \phi)|  \dd \tau
   + O(T^{-20}).
\end{equation*}
Note that for $|\tau|-T \ll T^\varepsilon$ and $t_k\ll T^{\varepsilon/2}$, we at least have the following convexity bound
\begin{equation}\label{eqn:convexity}
  L(1/2-i\tau,\phi_k\times \phi) \ll T^{1/2+\varepsilon},
\end{equation}
 the Weyl bound
\begin{equation}\label{eqn:subconvexity1}
  \zeta(1/2+i\tau) \ll T^{1/6+\varepsilon} ,
\end{equation}
and the subconvexity bound (\cite{KY} or Corollary \ref{cor})
\begin{equation}\label{eqn:subconvexity3}
  L(1/2+i\tau,\Sym^2 \phi) \ll T^{2/3+\varepsilon}.
\end{equation}
Hence  we obtain
\begin{equation}\label{eqn:Mke<<}
  \mathcal{M}_{k,e}
  \ll  T^{-3/2+1/2+1/6+2/3+\varepsilon}  \ll  T^{-1/6+\varepsilon}.
\end{equation}

\subsection{Estimate of $\mathcal{M}_{t,c}$}
\label{subsec:Mtc}
By the formulas in \S \ref{subsec:RS&Watson} and similar estimates as in \S \ref{subsec:Mkc}, we have
\begin{multline*}
  \mathcal{M}_{t,c}  \ll \frac{1}{T^{3/2}} \sum_{|t_j-T|\leq T^\varepsilon}
  L(1/2,E_t\times \phi\times \phi_j)^{1/2}  L(1/2,\phi_j)^{1/2} L(1/2, \Sym^2 \phi\times \phi_j)^{1/2}
  + O(T^{-20}).
\end{multline*}
Note that
\[
  L(1/2,E_t\times \phi\times \phi_j)^{1/2} = |L(1/2+it, \phi\times \phi_j)|
\]
and we have the subconvexity bound (\cite{Ivic})
\begin{equation}\label{eqn:subconvexity2}
  L(1/2,\phi_j) \ll t_j^{1/3+\varepsilon}.
\end{equation}
Hence we get
\begin{equation*}
  \mathcal{M}_{t,c}  \ll  T^{-4/3+\varepsilon}  \sum_{|t_j-T|\leq T^\varepsilon}  |L(1/2+it, \phi\times \phi_j)|   L(1/2, \Sym^2 \phi\times \phi_j)^{1/2}
  + O(T^{-20}).
\end{equation*}
By the Cauchy--Schwarz inequality, we have
\begin{multline*}
  \mathcal{M}_{t,c}  \ll  T^{-4/3+\varepsilon}  \Big( \sum_{|t_j-T|\leq T^\varepsilon}  |L(1/2+it, \phi\times \phi_j)|^2  \Big)^{1/2}
  \Big( \sum_{|t_j-T|\leq T^\varepsilon}     L(1/2, \Sym^2 \phi\times \phi_j) \Big)^{1/2} \\
  + O(T^{-20}) .
\end{multline*}
Note that by the spectral large sieve inequality, we have  (\emph{cf.} Theorem \ref{thm:moment_L_functions_2+8})
\[
   \sum_{|t_j-T|\leq T^\varepsilon}  |L(1/2+it, \phi\times \phi_j)|^2
   \ll T^{1+\varepsilon}.
\]
Together with Theorem \ref{thm:moment_L_functions_6}, we have
\begin{equation}\label{eqn:Mtc<<}
  \mathcal{M}_{t,c}  \ll  T^{-4/3+ 1/2+2/3+\varepsilon} = T^{-1/6+\varepsilon}.
\end{equation}

\subsection{Estimate of $\mathcal{M}_{t,e}$}
\label{subsec:Mte}
By the formulas in \S \ref{subsec:RS&Watson} and similar estimates as in \S \ref{subsec:Mke}, we have
\begin{equation*}
  \mathcal{M}_{t,e}
  \ll  T^{-3/2+\varepsilon}  \int_{|\tau|-T \ll T^\varepsilon}
   |L(1/2-i\tau,E_t\times \phi) \zeta(1/2+i\tau) L(1/2+i\tau,\Sym^2 \phi)|  \dd \tau
   + O(T^{-20}).
\end{equation*}
Note that for $|\tau|-T \ll T^\varepsilon$, we have the following convexity bounds
\[
  |L(1/2-i\tau,E_t\times \phi)| = |L(1/2-i\tau+it, \phi) L(1/2-i\tau-it,\phi)|
  \ll T^{1/2+\varepsilon}.
\]
Together with \eqref{eqn:convexity} and \eqref{eqn:subconvexity3}, we get
\begin{equation}\label{eqn:Mte<<}
  \mathcal{M}_{t,e}
  \ll  T^{-3/2+1/2+1/6+2/3+\varepsilon}  = T^{-1/6+\varepsilon}.
\end{equation}

\subsection{Proof of Theorems \ref{thm:HM} and \ref{thm:HM2}}

By \eqref{eqn:Mk=Mkc+Mke}, \eqref{eqn:Mkc<<} and \eqref{eqn:Mke<<}, we have
\[
  \mathcal{M}_k \ll T^{-1/12+\varepsilon},
\]
which proves \eqref{eqn:3moment_cusp}.
By \eqref{eqn:Mt=Mtc+Mte}, \eqref{eqn:Mtc<<} and \eqref{eqn:Mte<<}, we have
\[
  \mathcal{M}_t \ll T^{-1/6+\varepsilon},
\]
which proves \eqref{eqn:3moment_ES}.
This completes the proof of Theorem \ref{thm:HM2}.

Note that for any $l\geq1$
\[
  \langle \phi_k, \psi \rangle = \frac{1}{(1/4+t_k^2)^l}\langle \Delta^l \phi_k, \psi \rangle  = \frac{1}{(1/4+t_k^2)^l}\langle \phi_k, \Delta^l \psi \rangle \ll_l \frac{1}{(1/4+t_k^2)^l}  ,
\]
\[
  \langle E_t, \psi \rangle  = \frac{1}{(1/4+t^2)^l}\langle E_t, \Delta^l \psi \rangle
  \ll \frac{1}{(1/4+t^2)^l} \int_{\mathbb X}  |E_t(z)| |\Delta^l \psi(z)| \frac{\dd x \dd y}{y^2}.
\]
By the Cauchy--Schwarz inequality and QUE for Eisenstein series \eqref{eqn:QUE_ES}, we have
\begin{equation}\label{eqn:<phik,psi><<}
  \langle \phi_k, \psi \rangle
  \ll_A t_k^{-A}, \quad
  \langle E_t, \psi \rangle
  \ll_A (1+|t|)^{-A}, \quad
  {\rm for \ any} \ A>0.
\end{equation}
By using Parseval's formula and Watson's formula  as in the proof of Theorem \ref{thm:HM2}, together with the convexity bounds of central $L$-values, we have
\begin{equation}\label{eqn:<phik,phi3><<}
  \langle \phi_k ,\phi^3\rangle \ll (t_k T)^B
  \quad \textrm{and} \quad
  \langle E_t ,\phi^3\rangle \ll ((1+|t|) T)^B,
\end{equation}
for some absolute constant $B>0$.
By \eqref{eqn:Selberg_decomposition}, \eqref{eqn:<phik,psi><<}, and \eqref{eqn:<phik,phi3><<}, the contribution to $\langle \psi,\phi^3\rangle$ from $\phi_k$ and $E_t$ with $t_k\geq T^\varepsilon$ and $|t|\geq T^\varepsilon$ is negligibly small.
Hence we prove Theorem \ref{thm:HM} by using Theorem \ref{thm:HM2}.

\section*{Acknowledgements}

The author would like to thank Prof.\ Jianya Liu and Ze\'{e}v Rudnick for their valuable discussions and constant encouragement.
He also wants to thank  Peter Humphries and Yongxiao Lin for their comments.
He gratefully thanks to the referees for the constructive comments and recommendations which definitely improve the readability and quality of the paper.

\textbf{Conflict of interest.} The  author states that there is no conflict of interest.

\textbf{Data Availability.} Data sharing not applicable to this article as no datasets were generated or analysed during the current study.

%%%%%%%%%%%%%%%%%%%%%%%%%%%%%%%%%%%%%%%%%%%%%%%%%%%%%%%%%%%%%%%%%%%
%%%%%                        References                       %%%%%
%%%%%%%%%%%%%%%%%%%%%%%%%%%%%%%%%%%%%%%%%%%%%%%%%%%%%%%%%%%%%%%%%%%
%\medskip

\end{document}